\newtheorem{theorem}{Theorem}[section]
\newtheorem{lemma}[theorem]{Lemma}
\newtheorem{definition}[theorem]{Definition}
\newtheorem{prop}[theorem]{Proposition}
\newtheorem{cor}[theorem]{Corollary}
\newtheorem{claim}[theorem]{Claim}
\newtheorem{rmk}[theorem]{Remark}
\def\mapnew#1{\smash{\mathop{\longrightarrow}\limits^{#1}}}
\def\-{\overline}
\newcommand{\Z}{\mathbb{Z}}
\newcommand{\V}{\mathcal{V}}
\newcommand{\K}{\hspace{2pt} \mathbb{K}}
\newcommand{\Ker}{\operatorname{Ker}} 
\newcommand{\ld}{\lambda_\Delta}
\newcommand{\ee}{\varepsilon}
\newcommand{\ract}{\triangleleft}
\newcommand{\pt}{\tilde \pi}
\newcommand{\kpg}{\K_{par} G}
\newcommand{\one}{1_{\K\Delta}}
\newcommand{\idB}{1}
\newcommand{\notni}{  \hspace{1pt } / \hspace{-5pt }\ni}
\numberwithin{equation}{section}
\def\Vightarrow#1{\smash{\mathop{\longrightarrow}\limits^{#1}}}
\newcommand{\kparg}{\K_{par}G}
\newcommand{\kd}{\K \Delta}
\newcommand{\m}{{}^{-1}}
\newcommand{\A}{{\mathcal A}}
\begin{document}

\title{Homology and cohomology via the partial group algebra}

\author{Marcelo Muniz Alves, Mikhailo Dokuchaev, Dessislava H. Kochloukova}

\address
{Federal University of Paran\'a (UFPR), Brazil; University of S\~ao Paulo (USP), Brazil; State University of Campinas (UNICAMP), Brazil} 
\email{marcelomsa@ufpr.br, dokucha@gmail.com, desi@unicamp.br}

\subjclass[2010]{Primary 20J05,  Secondary 16E30, 20J06}

\date{}

\keywords{partial group actions, partial group algebra, group homology, cohomological dimension}

\begin{abstract}  We study partial homology and cohomology from ring theoretic point of view via the  partial group algebra $\kpg$. In particular, we link the partial homology and cohomology of a group $G$ with coefficients in an irreducible (resp. indecomposable) $\kpg$-module $M$ with the ordinary homology and cohomology groups of a subgroup $H$ of $G,$ where $H$ depends on $M,$ with coefficients in an appropriate irreducible (resp. indecomposable) $\K H$-module. Furthermore, we compare the standard cohomological dimension $cd_{\K}(G)$ (over a field $\K$) with the partial cohomological dimension $cd_{\K}^{par}(G)$ (over $\K$) and show that  $cd_{\K}^{par}(G) \geq cd_{\K}(G)$  and that there is equality for $G = \Z$.

\end{abstract}

\maketitle

\section{Introduction}

 Partial actions and partial representations enjoy diverse  theoretic developments and keep finding notable  applications.  They have been brought into  use in the theory of $C^*$-algebras, providing, in particular, an  efficient approach to algebras generated by partial isometries, the Cuntz-Krieger algebras and the more general  Exel-Laca  algebras being  prominent early examples of the success of the new notions \cite{E2}, \cite{EL}.   In addition, they    allowed  for an isomorphic identification of  any  second countable $C^*$-algebraic bundle, satisfying a mild regularity condition,
with the semidirect product bundle associated to a continuous twisted partial group action  on the unit fibre algebra \cite{E0}.  Apart from their relevance in the study of   K-theory, ideal structure and representations of the $C^*$-algebras under consideration, the new tools turned out to be also productive  in  dealing with the KMS equilibrium states of $C^*$-dynamical systems  \cite{EL2}.

The $C^*$-algebraic advances, the Exel's definition  of a partial action of a group on an abstract set \cite{E1}, the concept of a partial representation introduced in \cite{E1} and \cite{QR}, and the relation of  these notions to  the inverse semigroup $S(G)$  defined in \cite{E1} stimulated the  algebraic investigations on the subject,  initially made  in  \cite{DEP}, \cite{KL1},   \cite{St1}, \cite{St2}.
In algebra the new concepts are   useful   to graded algebras,   Hecke algebras,   Leavitt path algebras,  
  inverse semigroups,  restriction  semigroups  and automata (see the survey article \cite{D3} and the references therein).  
  Moreover, the partial action definition of the Thompson's groups $V$ and $\mathcal G _{2,1}$
provides an appropriate framework to deal with algorithmic problems for them \cite{Birget}. Some important decision problems, properly placed in the language  of inverse semigroups \cite{St5}, are related to    
extension  properties of partial bijections, to some of which partial actions are also handy    \cite{Coul2}. In particular, they participate in an interaction between model theory, profinite topology of groups and formal languages, being relevant  this way  to  the Ribes-Zalesskii properties of groups and other topologically defined conditions on groups.

 Amongst the recent applications, we mention the significance of partial actions and  partial representations to  $C^*$-algebras related to dynamical systems of type $(m,n)$ \cite{AraEKa},  to dynamical systems associated to separated graphs and related $C^*$-algebras   \cite{AraE1}, \cite{AraL},  to paradoxical decompositions \cite{AraE1}, to shifts  \cite{AraL}, to full or reduced $C^*$-algebras  of $E$-unitary or strongly $E^*$-unitary inverse semigroups  \cite{MiSt}, 
to  topological higher rank graphs \cite{RenWil}, to Matsumoto and Carlsen-Matsumoto $C^*$-algebras of arbitrary subshifts    \cite{DE2} (see also  \cite{ESt1}, \cite{ESt2}),   to ultragraph $C^*$-algebras \cite{GR3} and to expansions of monoids in the class of two-sided restriction monoids  \cite{Kud2}, the latter involving a previous construction  from \cite{Kud} based on partial actions (see also \cite{CornGould}).

  Besides being related to inverse semigroups,  partial actions and partial representations also have various connections to groupoids \cite{D3}; one of them is based on the partial action groupoid, another one
 involves    the partial group algebra, which    is crucial for the present article, and  one more is the fact  proved in \cite{BeuGon2} and \cite{HazratLi}, stating that Steinberg algebras (\cite{St4},  \cite{ClarFartSimsTomf}), associated to  Hausdorff ample groupoids, can be seen as  partial skew inverse semigroup rings
(see also \cite{Demeneghi}). More information on partial actions, partial representations and their applications may be found   in  Exel's book \cite{E6} and in   \cite{D3}. In particular,  the former reference contains detailed partial crossed product descriptions of  graph $C^*$-algebras and  the Wiener-Hopf $C^*$-algebras associated to  quasi-lattice ordered groups.

Among the theoretic developments two partial group cohomology theories were introduced: one in \cite{DKh} based on partial actions and another one in \cite{A-A-R} based on partial representations. Despite the fact that partial actions and partial representations are related notions, the two cohomology theories have little in common. In particular, 
the cohomology in  \cite{A-A-R}  deals with an abelian category, whereas the category of partial modules from   \cite{DKh} is not even additive. The cohomology from  \cite{DKh} turned out to be useful to extensions of semilattices of groups by groups, to a Chase-Harrison-Rosenberg exact sequence for partial Galois extensions,   to partial projective group representations and to  reduced $C^*$-crossed products   (see \cite{D3}). In particular, it was shown in \cite{DoSa} that each group component of the partial Schur multiplier is a partial cohomology group. Furthermore, partial $2$-cocycles  appeared in \cite{KennedySchafhauser} as certain obstructions  in the study of the  ideal structure of the reduced crossed product of a $C^*$-algebra by a global action.  In addition, it was extended from groups to groupoids in \cite{NysOinPin3} and used to 
classify the equivalence classes of certain groupoid graded rings.  
Partial  actions of Hopf algebras were introduced in \cite{CJ} and were reformulated in 
\cite{HV} and \cite{SV} with the aim of obtaining a richer theory of partial actions of algebraic groups. The partial cohomology theory developed in \cite{DKh} stimulated its Hopf theoretic treatment  in \cite{BaMoTe},  where a natural generalization of Sweedler's cohomology  to the partial action setting was given.

The partial modules in \cite{DKh} are unital partial actions of groups on commutative monoids, whereas the authors in  \cite{A-A-R} deal with modules over the partial group algebra  $\K_{par} G.$ The latter is an algebra
which governs the partial  $\K$-representations of a group $G,$ and it has  as a $\K$-basis the  above mentioned  semigroup 
$S(G),$ introduced by R. Exel in \cite{E1} to deal with partial actions and partial representations. R. Exel defined $S(G)$ by generators and relations and gave a canonical form of the elements of $S(G)$ which he used  to show that $S(G)$ is an inverse monoid \cite{E1}. Later in  \cite{KL} J. Kellendonk and M. Lawson proved that  $S(G)$ is isomorphic to the Szendrei expansion of $G,$ and, as a consequence of a result by M. Szendrei \cite{Sze},  $S(G)$ is isomorphic to the Birget-Rhodes expansion of $G.$ The information on $S(G)$ given in \cite{E1} and 
\cite{KL} are important technical tools when dealing with partial linear and partial projective group 
representations, as well as with the partial cohomologies.

The algebraic study of   $\K_{par} G$ was initiated in  \cite{DEP}. In particular, using a groupoid $\Gamma (G)$
associated to $G,$ a result on the structure of 
   $\K_{par} G$ was obtained assuming that  $G$ is a finite group. The latter fact  permits one to make conclusions about the  partial representations of finite groups. 
   The connection of partial representations of $G$ with the groupoid $\Gamma(G)$ also inspired the development of a theory of partial representations of Hopf algebras in \cite{ABV1}, 
   \cite{ABV2}, where it was shown that partial representations corresponds to left modules over a Hopf algebroid. 
   The groupoid technique was further used in \cite{DZh} to give a structure result on the indecomposable/irreducible  finite dimensional partial representations of arbitrary groups.
Furthermore, it was shown in \cite{DE} that $\K_{par} G$ is isomorphic to the  partial skew group ring $B \rtimes _{\tau} G ,$ where $\tau $ is a partial action  of $G$ on the subalgebra $B$ of  $\K_{par} G$  generated by the idempotents of $S(G).$

 It is very natural  to consider a (co)homology theory for $G$ using   $\K_{par} G$-modules.  A key point in 
  \cite{A-A-R} is to choose as the ``trivial'' module the    $\K_{par} G$-module structure on $B$ given by the partial representation $ G\to {\rm End}_{\K} (B), $ which corresponds to the partial action $\tau $ 
(see Subsection~\ref{subsec:kparG}). Then the $n$th partial cohomology group $H_{par}^n(G,M)$ of $G$ with values in a left    $\K_{par} G$-module 
$M$ is defined  in \cite{A-A-R} by means of the corresponding ${\rm Ext}$ functor (see Subsection~\ref{subsec:introhomo}).  The authors  of   \cite{A-A-R}  prove the existence of a spectral sequence relating the 
 Hochschild cohomology of  the partial skew group ring  $\A\rtimes _\tau G$ with   $H_{par}^n(G, - )$ and the
 Hochschild cohomology of $\A .$ The globalization problem for $H_{par}^n(G,M)$ was investigated in \cite{DKhS2} in the case when  $M$ is an algebra, whose     $\K_{par} G$-module structure comes from a unital partial action of $G$ on $M.$ For more information    around   $\K_{par} G $ the reader is referred to \cite{D3} and    \cite{E6}.    In particular, the difference  between the cohomologies in \cite{A-A-R} and  \cite{DKh} is discussed in \cite{D3} and \cite{DKhS2}.

In this paper  we introduce homology groups for partial representations as the left derived functors of the tensor functors 
    $ - \otimes_{\kpg} B$, $B \otimes_{\kpg} - $, and we
  study homological and cohomological properties of the  partial group algebra $\kpg$ using ring theoretic homological methods.

Let $G$ be an arbitrary group, $\K$ be a field  and $V$ be an irreducible (resp. indecomposable) left $\K_{par} G$-module, which is  finite dimensional over $\K .$ By \cite[Thm.~2.3]{DZh}  there is a unique connected component $\Delta $ of $\Gamma (G)$ with a finite number of vertices, an elementary   bimodule $_{\K_{par} G} W _{\K H}$ related to $\Delta$  and a left   irreducible (resp. indecomposable) $\K H$-module $U,$ finite dimensional over $\K , $ such that
$V  \simeq W \otimes_{\K H} U$ as left  $\K_{par} G$-modules. Here $H$ is the stabilizer in $G$ of a fixed vertex of the connected component $\Delta$.

 Our main goal is to establish the following result.

\medskip
 
 {\bf Theorem A} {\it Let $G$ be an arbitrary group, $\Delta $ be a connected component of  the groupoid $\Gamma (G)$ with a finite number of vertices and $W$ be the elementary $\kpg$-$\K H$-bimodule related to $\Delta$. Let $U$ be an arbitrary left $\K H$-module.  Then 
 $$
 H^i_{par}(G, W \otimes_{\K H}U ) \simeq H^i(H, U) 
 ~ \hbox{ 
 and } ~
 H_i^{par}(G, W \otimes_{\K H}U ) \simeq H_i(H, U). 
 $$
In particular, if $V$ is an irreducible (resp. indecomposable) left $\K_{par} G$-module, which is  finite dimensional over $\K $ and as above $V  \simeq W \otimes_{\K H} U$ as left  $\K_{par} G$-modules, then $$H_i^{par}(G,V) \simeq H_i(H, U) ~ \hbox{ and  } ~ H^i_{par}(G,V) \simeq H^i(H, U).$$}

\medskip

By definition for a left $\kpg$-module $V$ we have $ H_{par}^n(G, V) = Ext^n_{\kpg} (B, V)$ where $B$ is considered as a  left   $\kpg$-module and $ H^{par}_n(G, V) = Tor_n^{\kpg} (B, V)$, where $B$ is considered as a     right $\kpg$-module (see Subsection~\ref{subsec:introhomo}).

Theorem A follows from Theorem~\ref{general1} and Theorem~\ref{general2}. Its  proof   needs  homological and cohomological versions of the Shapiro Lemma that we  give in Section \ref{Shapiro-section}.  Their  use  in the proof  of  Theorem A  requires  
$B \otimes_{\K_{par} G} W$
 to be   isomorphic to the trivial right $\K H$-module $\K$ and $\K \Delta$  to be   a flat left $\kpg$-module  (see  Proposition \ref{trivial} and Theorem \ref{final-flat}).    Proposition~\ref{trivial} is a  consequence of Theorem~\ref{isomorphism.Bdelta} whose proof is separated in
 Section~\ref{section.lemma7.7}. 
 In the case when $G$ is finite by \cite{DEP} $\kpg \simeq \K \Gamma \simeq \oplus \K \Delta$, hence $\K \Delta$ is a projective left $\kpg$-module. Furthermore, we show in Section \ref{projective-section} that  if $A$ is a vertex of a component $\Delta$ with finitely many vertices and $G \setminus A$ is infinite, then  $\K \Delta$ is not a projective left $\kpg$-module.
 Moreover,
  if $G$ is an  infinite group, then for the connected component $\Delta$ with unique vertex $G$, $\K \Delta = \K G$ is not projective as a $\kpg$-module.

 In Section \ref{section-bimodule} we  establish  the following  consequences  of Theorem A for finite groups $G$.
 
 \medskip{\bf Corollary B} {\it 	Suppose  that  $G$ is a finite group. Then for $i \geq 0$
 	$$
 	H_i^{par} (G, B) \simeq \oplus_{\Delta} H_i(H, \K) \ \hbox{ and } \ H^i_{par} (G, B) \simeq \oplus_{\Delta} H^i(H, \K),
 	$$
 	where the sum is  taken over the connected components $\Delta$ of the groupoid $\Gamma(G)$ and $H$ is the stabilizer of a chosen vertex of $\Delta$ (thus   it  is defined up to conjugation).}

 \medskip
 
 We define the {\it  partial cohomological dimension} $cd^{par}_{\K}(G)$ of $G$ (over $\K$) as the projective dimension of $B$ as  a   left $\kpg$-module i.e.  $$ cd^{par}_{\K}(G) = \max \{ i ~| ~ H^i_{par}(G, V) \not= 0 \hbox{ for some } \kpg \hbox{-module } V\}.$$
 Note that {\it the cohomological dimension} $cd_{\K}(G)$ of $G$ (over $\K$) is the projective dimension of the trivial left $\K G$-module $\K$ i.e. 
 $$ cd_{\K}(G) = \max \{ i ~| ~ H^i(G, M) \not= 0 \hbox{ for some } \K G \hbox{-module } M\}.$$
 
 As a corollary of Theorem A we prove the following result.
 
 \medskip
 {\bf Corollary C} {\it For any group $G$ we have $cd^{par}_{\K}(G) \geq cd_{\K}(G).$ In particular, if  $char(\K) = p > 0$  and $G$ has $p$-torsion then    $cd^{par}_{\K}(G) = cd_{\K} (G) = \infty$.}

\medskip
We believe that the following stronger version of Corollary C holds.

\medskip
{\bf Conjecture D} {\it For any group $G$ we have $cd^{par}_{\K}(G) = cd_{\K}(G).$}

\medskip In the particular case when $G$ is a free group   we have that   $cd_{\K}(G)=1$  and Conjecture D is equivalent to  the following:

\medskip

{\bf Conjecture E} {\it  Let $G$ be a free group  and $R = \K_{par} G.$  Then  the kernel $IG$ of the augmentation map $\epsilon : R \to B$ is a projective left $R$-module.}

\medskip
We prove in Section \ref{section-Z} that Conjecture E holds for $G = \Z,$  so that  $cd^{par}_{\K}(G) = cd_{\K}(G) =1.$ It is interesting to note that in this case $IG$ is not a free $\kpg$-module but only a projective one.

\section{Notation and preliminary results}\label{sec:notations}

\subsection{The  partial  group algebra $\K_{par} G$}\label{subsec:kparG} 

 Let $G$ be a group and $\K$ be a field. We recall some well known definitions  and facts. 

\begin{definition}
A {\bf partial representation } of $G$ into a  unital (associative)  $\K$-algebra $\A$  is a map ${\pi : G \to \A}$ such that for 
every $g,h \in G$ we have

1) $\pi(g) \pi(h) \pi(h^{-1}) = \pi(gh) \pi(h^{-1})$;

2) $\pi(g^{-1}) \pi(g) \pi(h) = \pi(g^{-1}) \pi(gh)$;

3) $\pi(1) = 1_{\A}$, where $1$ is the neutral element of $G.$
\end{definition}

If  $\A=End_{\K}(V),$ where $V$ is a vector space over $\K ,$ then we say that $\pi $  is a   partial representation  of 
$G$ on $V.$

  A  concept closely related to partial representations is that of a partial action:

\begin{definition} A partial action $\alpha$ of $G$ on a $\K$-algebra $\A$ is given by a collection $\{ D_g \}_{g \in G}$ of 
ideals in $\A$ and a family $\{ \alpha_g : D_{g^{-1}} \to D_g  \}_{g \in G}$ of non-necessarily unital algebra isomorphisms satisfying:
	
	1)  $D_e = \A$ and $\alpha_e = id_{\A}$;
	
	2) $ \alpha_h( D_{h^{-1}} \cap D_{(gh)^{-1}}) = D_h \cap D_{g^{-1}}$;
	
	3) if $ y \in  D_{h^{-1}} \cap D_{(gh)^{-1}}$ then $\alpha_g \alpha_h(y) = \alpha_{gh}(y)$.
\end{definition} 

A partial action is said to be {\bf unital} if each ideal $D_g $ is a unital algebra, i.e. $D_g = 1_g \A$ for each $g\in G,$  where $1_g$ is a central idempotent of $\A.$ To a unital partial action $\alpha $ of $G$ on $\A$     one may associate two partial representations. One of them is the map  $ G \to {\rm End} _{\K} (\A)$ given by 
\begin{equation}\label{FirstParRep} 
g \mapsto [ a \mapsto  \alpha _g(a1_{g^{-1}})] , \;\;\;\;\; g\in G, a\in \A. 
\end{equation}   The other one involves  the partial  skew group algebra. The latter is defined 
 as follows.  

\begin{definition} Suppose there is a  unital partial  action  $\alpha $  of $G$ on a $\K$-algebra $\A.$  Then there is an 
associative   partial skew group ring (also called the partial smash product),  denoted by  $\A \rtimes_{\alpha} G$, where
	$$
	\A \rtimes_{\alpha} G = \oplus_{g \in G} D_g \# g
	$$
	 and $$(a { 1_g} \# g) (b { 1_h} \# h) = a \alpha_g(b { 1_h 1_{g^{-1}}  })  \# gh \hbox{ for } a,b \in \A.$$
\end{definition}
  The second partial representation associated to $\alpha $ is  the map $G \to  \A \rtimes_{\alpha} G$ given by 
$g \to {  1_g} \# g$.

There is a bijective correspondence between the partial $\K$-representations of $G$ and the 
 $\K$-representations 
of the partial group algebra $\K_{par} G,$ whose definition is as follows.
 
\begin{definition}
The {\bf partial group algebra } $\K_{par} G$ is the $\K$-algebra with a generating set $\{ [g] \mid g \in G\}$ subject to the relations

1) $[g][h][h^{-1}] = [gh] [h^{-1}]$;

2) $[g^{-1}] [g] [h] = [g^{-1}] [gh]$;

3) $[1] = 1_{\kpg}$,

 for all $g,h\in G.$
\end{definition}  \noindent For simplicity, we  will also denote the identity of $\kpg$  by $1$ instead of $1_{\kpg}$.    Evidently, 
\begin{equation}\label{basicParRepr}
G\ni g \mapsto [g] \in   \K_{par} G
\end{equation}  is a partial representation.

Each partial representation  $\pi : G \to \A$ gives rise to a unital partial action on a commutative subalgebra of 
$\A$ \cite[Lemma 6.5]{DE}. We specify this for  the case of the partial representation \eqref{basicParRepr}. It is an easily
 seen and known fact that the elements $e_g = [g][g^{-1}]$ are pairwise commuting idempotents of   $\K_{par} G$ such that 
$$[g]e_h = e_{gh} [g] \hbox{ and } e_1 = 1,$$ 
for all $g,h\in G.$ 
Let
\begin{equation}\label{subalgB}
 B =_{\text{alg}}\langle e_g\, \mid \,  g\in G\rangle,
\end{equation} be the commutative subalgebra of     $\K_{par} G,$ generated by the elements $\{ e_g \}_{ g\in G}$
and let $D_g = e_g B.$ Then the partial representation  \eqref{basicParRepr} induces a  unital  partial action $\tau $
of $G$ on $B$  given by the family of isomorphisms $\tau_g : D_{g^{-1}} \to D_g$ defined by
	 \begin{equation}\label{basicParAc}
\tau_g(e_{g^{-1}} e_{h_1} \ldots e_{h_n}) = [g] e_{g^{-1}} e_{h_1} \ldots e_{h_n} [g^{-1}] = e_g e_{g h_1} \ldots e_{g h_n}.
\end{equation}

\begin{lemma} \cite[Theorem 6.9]{DE}\label{A-A-Rlema} There is an isomorphism $$ \varphi : \kpg  \to  B \rtimes_{\tau} G,$$
	given by
	$$
	\varphi ([g_1] \ldots [g_n]) = e_{g_1} e_{g_1g_2} \ldots e_{g_1 \ldots g_n} \# g_1 \ldots g_n
	$$ and
	$$
	\varphi^{-1} (e_{g} e_{h_1} \ldots e_{h_n} \# g)= e_g e_{h_1} \ldots e_{h_n}  [g].$$
	\end{lemma}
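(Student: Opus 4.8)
The plan is to build $\varphi$ from the universal property of $\kpg$ and then to exhibit an explicit two-sided inverse. First I would verify that the map $\pi : G \to B \rtimes_\tau G$ given by $\pi(g) = e_g \# g$ (that is, $1_g \# g$, the second partial representation attached to $\tau$ described above) is a partial representation. Relation 3) is immediate from $e_1 = 1$, while 1) and 2) are short computations using the multiplication rule of the partial skew group ring together with the explicit formula $\tau_g(e_{g^{-1}}e_{h_1}\cdots e_{h_n}) = e_g e_{gh_1}\cdots e_{gh_n}$ from \eqref{basicParAc}. For instance $\pi(g)\pi(h) = \tau_g(e_{g^{-1}}e_h)\#gh = e_g e_{gh}\,\#\,gh$, after which both sides of 1) evaluate to $e_g e_{gh}\,\#\,g$. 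Since $\kpg$ is the universal algebra for partial representations of $G$, this produces a unique algebra homomorphism $\varphi : \kpg \to B \rtimes_\tau G$ with $\varphi([g]) = e_g \# g$.

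Next I would compute $\varphi$ on products by induction on $n$, again using the multiplication rule, to get the stated formula $\varphi([g_1]\cdots[g_n]) = e_{g_1}e_{g_1g_2}\cdots e_{g_1\cdots g_n}\,\#\,g_1\cdots g_n$; the inductive step is exactly the identity $\tau_w(e_{w^{-1}}e_{g_n}) = e_w e_{wg_n}$ with $w = g_1\cdots g_{n-1}$. A useful special case is $\varphi(e_g) = \varphi([g][g^{-1}]) = e_g \# 1$, whence $\varphi(e_{h_1}\cdots e_{h_n}) = e_{h_1}\cdots e_{h_n}\#1$ and, more generally, $\varphi(e_{h_1}\cdots e_{h_n}[g]) = e_{h_1}\cdots e_{h_n}e_g\,\#\,g$. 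As $g$ and the $h_i$ vary these elements span every homogeneous component $D_g \# g = e_g B \# g$, so $\varphi$ is surjective.

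For the inverse I would introduce $\psi : B \rtimes_\tau G \to \kpg$, $b \# g \mapsto b\,[g]$ for $b \in D_g$. This is a well-defined $\K$-linear map, since on each summand it is the composite of the inclusion $D_g \subseteq B \subseteq \kpg$ with right multiplication by $[g]$, and the decomposition $B \rtimes_\tau G = \oplus_g D_g \# g$ is direct. Using $e_g[g] = [g][g^{-1}][g] = [g]$ (relation 1) with $h = g^{-1}$), a direct check on the spanning elements above gives $\varphi\circ\psi = \mathrm{id}$ on $B\rtimes_\tau G$ and $\psi\circ\varphi(e_{h_1}\cdots e_{h_n}[g]) = e_{h_1}\cdots e_{h_n}[g]$ on $\kpg$.

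The main obstacle is to upgrade the last identity to $\psi\circ\varphi = \mathrm{id}_{\kpg}$, since this needs the monomials $e_{h_1}\cdots e_{h_n}[g]$ to span $\kpg$. This is precisely Exel's canonical form for the elements of the inverse monoid $S(G)$, which supplies a $\K$-basis of $\kpg = \K S(G)$ of exactly this shape; invoking it closes the argument. (Alternatively, one could avoid the canonical form by checking directly that $\psi$ is multiplicative, so that $\psi\circ\varphi$ is an algebra homomorphism agreeing with $\mathrm{id}$ on the generators $[g]$, at the cost of a heavier computation with the skew product.) Either way $\varphi$ and $\psi$ are mutually inverse, so $\varphi$ is an isomorphism and the displayed formula for $\varphi^{-1}$ is $\psi$.
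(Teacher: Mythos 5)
Your argument is correct; the paper does not prove this lemma but cites it as \cite[Theorem 6.9]{DE}, and your proposal reconstructs essentially the standard argument from that reference: the universal property of $\kpg$ yields $\varphi$, the monomial formula follows by induction on the skew-product multiplication, and the canonical form of $S(G)$ shows the monomials $e_{h_1}\cdots e_{h_n}[g]$ exhaust $\kpg$ so that $\psi$ is a genuine two-sided inverse. The only remark worth adding is that you need only the spanning (not the linear-independence) part of Exel's canonical form, and that spanning already follows inside $\kpg$ from the identity $[g_1]\cdots[g_n]=e_{g_1}e_{g_1g_2}\cdots e_{g_1\cdots g_n}[g_1\cdots g_n]$, itself an easy induction from $[g][h]=e_g[gh]$.
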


 It is an immediate consequence of Lemma~\ref{A-A-Rlema} that $\K_{par} G$ is a $G$-graded algebra:

$$\K_{par} G = \oplus_{g \in G} B_g \hbox{ and } B_g B_h \subseteq B_{gh},
$$ where $B_g = D_g \# g$. It is readily seen that  each $D_g$  is the $\K$-vector subspace of $B$ generated by $[h_1] \ldots [h_k]$ where $k \geq 1$ and $h_1 \ldots h_k = g.$

According to \eqref{FirstParRep}  the unital partial action \eqref{basicParAc}   gives rise to  the partial representation  
 $\pi : G \to End_{\K}(B)$ given by  $\pi(g)(x) = [g] x [g^{-1}]$ for $g \in G, x \in B$. This endows $B$ with a structure a left $\kparg$-module: $$^{[g]} b = [g] b [g^{-1}].$$ Symmetrically, we shall consider $B$ as a  right $\kparg$-module defined by 
$$  b^{[g]}= [g^{-1}] b [g].$$

\subsection{Introduction to homological algebra}\label{subsec:introhomo}

 We refer the reader to Rotman's book \cite{Rotman} for the definitions  of the derived functors $Tor$ and $Ext$.
For a fixed associative ring $R$ we consider the derived functors $Tor^R_i( - , - )$ of the functor  tensor product $ - \otimes_R -$ and the derived functors $Ext^i_R( - , - )$ of the functor $Hom_R( - , - )$. Then for a left $\kpg$-module $V$ and a right $\kpg$-module $M$ the $n$-th  partial homology 
groups
 of $G$ with coefficients in $M$ and $V$ are 
$$ H^{par}_n(G, M) = Tor_n^{\kpg} (M, B)  \hbox{ and } H^{par}_n(G, V) = Tor_n^{\kpg} (B, V).$$

 Note that $B$ in $ Tor_n^{\kpg} (M, B)$ is a left $\kpg$-module and that $B$ in $Tor_n^{\kpg} (B, V)$ is a right $\kpg$-module.
Similarly the $n$-th  partial cohomology groups
of $G$
with coefficients in $M$ and $V$    are 
 $$H_{par}^n(G, M) = Ext^n_{\kpg} (B, M) \hbox{ and } H_{par}^n(G, V) = Ext^n_{\kpg} (B, V),$$
 where $B$ in $Ext^n_{\kpg} (B, M)$ is a right $\kpg$-module and $B$ in 
 $Ext^n_{\kpg} (B, V)$ is a left $\kpg$-module.
 
  A fundamental result that we will need later on is the existence of  the  long exact sequence in homology associated to a short exact sequence. Suppose $$0 \to V_1 \to V \to V_2 \to 0$$ is a short exact sequence of left $S$-modules for some associative ring $S$. Then there is a long exact sequence \cite[Cor.~6.30]{Rotman} 
 $$
 \ldots \to Tor_n^{S}(M, V_1) \to Tor_n^{S}(M, V) \to  Tor_n^{S}(M, V_2) \to Tor_{n-1}^{S}(M, V_1)  $$ $$ \to  \ldots \to Tor_1^{S} (M, V_2) \to  M \otimes_{S} V_1 \to  M \otimes_{S} V \to   M \otimes_{S} V_2 \to 0.$$
Similarly, for a short exact sequence of right $S$-modules
$$0 \to M_1 \to M \to M_2 \to 0$$ there  is a long exact sequence
$$
\ldots \to Tor_n^{S}(M_1, V) \to Tor_n^{S}(M, V) \to  Tor_n^{S}(M_2, V) \to  Tor_{n-1}^{S}(M_1, V)$$ $$ \to  \ldots \to Tor_1^{S} (M_2, V) \to  M_1 \otimes_{S} V \to  M \otimes_{S} V \to   M_2 \otimes_{S} V \to 0.$$
 We observe that if $$ \ldots \to W_1 \to W_2 \to W_3 \to W_4 \to \ldots$$ is a  long exact sequence of $S$-modules with $W_1 = 0 = W_4$, then 
 $$ \hbox{  the middle map } W_2 \to W_3 \hbox{ is an isomorphism}.$$We will apply this argument several times later on together with the fact that for a right $S$-module $A$ and a left $S$-module $B$ the functors $Tor_i^S(A, - )
$ and $Tor^S_i( - , B)$ vanish on projective $S$-modules; in particular 
$$Tor_i^S(A,S) = 0 = Tor^S_i(S , B).$$

\subsection{On idempotents and projective modules}

In this subsection $R$ is  a unital associative ring.
The following lemma is well known but for completeness we give a proof. 
\begin{lemma} \label{idempotents} 
	Let $e_1, \ldots, e_n \in R$ be idempotents that commute with each other. Then  
	$$
	Re_1 + R e_2 + \ldots + R e_n = R e,$$ where 
	$$e = e_1 + (1 - e_1) e_2 + \ldots +  ( 1- e_1) \ldots (1 - e_{n-1}) e_n
	$$ and	
	$R / (R e_1 + \ldots  + Re_n)$ is a projective $R$-module.
\end{lemma}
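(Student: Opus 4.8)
The plan is to recognize $e$ as a standard inclusion--exclusion idempotent and to reduce everything to the single fact that $1-e$ is the product of the complementary idempotents. Writing $f_i = 1 - e_i$, I first observe that the $f_i$ are again pairwise commuting idempotents. Denoting by $e^{(k)} = e_1 + f_1 e_2 + \cdots + f_1\cdots f_{k-1}e_k$ the partial sums in the defining expression for $e = e^{(n)}$, I would prove by induction on $n$ the closed form
$$1 - e = f_1 f_2 \cdots f_n = (1-e_1)(1-e_2)\cdots(1-e_n).$$
The base case $n=1$ is immediate, and the inductive step follows at once from $e^{(n)} = e^{(n-1)} + f_1\cdots f_{n-1}e_n$, which gives $1 - e^{(n)} = (1 - e^{(n-1)}) - f_1\cdots f_{n-1}e_n = f_1\cdots f_{n-1}(1 - e_n)$. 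Since a product of commuting idempotents is again idempotent, this identity immediately shows that $1-e$, and hence $e$, is idempotent.

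Next I would establish the equality $Re_1 + \cdots + Re_n = Re$ by proving both inclusions. For $Re \subseteq \sum_i Re_i$ it suffices to note that each summand $f_1\cdots f_{i-1}e_i$ in the definition of $e$ lies in $Re_i$, so that $e \in \sum_i Re_i$ and therefore $Re \subseteq \sum_i Re_i$, the right-hand side being a left ideal. For the reverse inclusion I would show that $e_i \in Re$ for every $i$. Using the product formula for $1-e$, one has $e_i(1-e) = e_i f_1\cdots f_n$; since the factor $f_i = 1-e_i$ satisfies $e_i f_i = 0$ and all the idempotents commute, this product vanishes. Hence $e_i = e_i e \in Re$, so $Re_i \subseteq Re$ and thus $\sum_i Re_i \subseteq Re$.

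Finally, once $e$ is known to be idempotent, the projectivity of $R/(Re_1+\cdots+Re_n) = R/Re$ is routine: the Peirce decomposition yields $R = Re \oplus R(1-e)$ as left $R$-modules, and the map $r \mapsto r(1-e)$ identifies $R/Re \cong R(1-e)$, which is a direct summand of the free module $R$ and therefore projective. I expect the only genuinely delicate point to be spotting the closed form $1-e = \prod_i(1-e_i)$; after that, the whole argument rests on the single computation $e_i(1-e_i)=0$ together with the commutativity of the idempotents.
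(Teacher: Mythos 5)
Your proof is correct and follows essentially the same route as the paper: the paper observes that the summands $e_1, (1-e_1)e_2, \ldots, (1-e_1)\cdots(1-e_{n-1})e_n$ are pairwise orthogonal idempotents, which is just the complementary face of your telescoping identity $1-e=(1-e_1)\cdots(1-e_n)$, and the projectivity step via $R = Re \oplus R(1-e)$ is identical. Your write-up actually supplies the details of the ideal equality $\sum_i Re_i = Re$ (via $e_i(1-e)=0$) that the paper merely asserts.
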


\begin{proof} 
	Note that $e_1$, $(1 - e_1) e_2$, $(1 - e_1) (1 - e_2) e_3$, $\ldots, ( 1- e_1) \ldots (1 - e_{n-1}) e_n$ are pairwise orthogonal  idempotents (i.e. with mutual product 0). Then
	$e$ is an idempotent and 
	$Re_1 + R e_2 + \ldots + R e_n = R e.$ 	Since $R = R e \oplus R ( 1- e)$ we conclude that  $$R / (R e_1 + \ldots  + Re_n) = R / R e \simeq R ( 1 - e)$$ 
is a projective $R$-module.
\end{proof}

 \section{Partial group homology  versus ordinary homology} \label{Shapiro-section}

 \begin{theorem}\label{HomolShapiro} ({\bf Homological Shapiro lemma for partial actions}) Let $W$ be a 
 $\kpg$-$\K H$-bimodule for some groups $G$ and $H$   such that $W$ is flat as a right $\K H$-module,  $W$ is  flat  as a left $\K_{par} G$-module and  $B \otimes_{\K_{par} G} W$ is isomorphic to the trivial right $\K H$-module $\K$. Then, for any  
 left $\K H$-module $U$,  considering $W \otimes_{\K H} U$ as a   left $\K_{par} G$-module,  
 we have
 	$$
 	H_i^{par}(G, W \otimes_{\K H} U) \simeq H_i(H, U), \;\;\;  i \geq 0.
 	$$
 \end{theorem}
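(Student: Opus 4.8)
The statement I want to prove: given a $\kpg$-$\K H$-bimodule $W$ that is flat on both sides (flat as a right $\K H$-module and flat as a left $\kpg$-module), with $B \otimes_{\kpg} W \cong \K$ as right $\K H$-modules, then for any left $\K H$-module $U$, we have $H_i^{par}(G, W \otimes_{\K H} U) \simeq H_i(H, U)$.

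Let me unpack the definitions. By definition, $H_i^{par}(G, W \otimes_{\K H} U) = \mathrm{Tor}_i^{\kpg}(B, W \otimes_{\K H} U)$, where $B$ is a right $\kpg$-module and $W \otimes_{\K H} U$ is a left $\kpg$-module. On the other side, $H_i(H, U)$ is ordinary group homology, which should be $\mathrm{Tor}_i^{\K H}(\K, U)$ where $\K$ is the trivial right $\K H$-module and $U$ is a left $\K H$-module.

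So I want:
$$\mathrm{Tor}_i^{\kpg}(B, W \otimes_{\K H} U) \cong \mathrm{Tor}_i^{\K H}(\K, U).$$

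This is a classical "change of rings" situation, and Shapiro's lemma in the global group case is proven exactly this way. Let me sketch the standard strategy.

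**Strategy.** The classical Shapiro lemma says $\mathrm{Tor}_i^{\Z G}(M, \Z G \otimes_{\Z H} N) \cong \mathrm{Tor}_i^{\Z H}(M, N)$ (with $M$ restricted to $H$). The analogous statement here replaces $\Z G \otimes_{\Z H}(-)$ (induction) by the functor $W \otimes_{\K H}(-)$, and replaces the right module $M$ by $B$.

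The key algebraic engine is a composite-functor / spectral sequence argument, or more directly an isomorphism of derived functors via a flatness-based adjunction. Here is the cleaner route.

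**Step 1 (the underived isomorphism).** I want to establish, for any left $\K H$-module $N$, a natural isomorphism
$$B \otimes_{\kpg} (W \otimes_{\K H} N) \cong (B \otimes_{\kpg} W) \otimes_{\K H} N \cong \K \otimes_{\K H} N.$$
The first isomorphism is associativity of tensor product: $B \otimes_{\kpg} W \otimes_{\K H} N$ is unambiguous, where $W$ is a $\kpg$-$\K H$-bimodule, $B$ is right $\kpg$, and $N$ is left $\K H$. The second uses the hypothesis $B \otimes_{\kpg} W \cong \K$ as right $\K H$-modules. So at the level of the functors themselves (degree $0$), the two composite functors
$$N \mapsto B \otimes_{\kpg}(W \otimes_{\K H} N) \quad \text{and} \quad N \mapsto \K \otimes_{\K H} N$$
agree naturally.

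**Step 2 (pass to derived functors via flatness).** I need the derived functors of these composites to agree. The mechanism is: the functor $W \otimes_{\K H}(-)$ sends projective (or flat) left $\K H$-modules to flat left $\kpg$-modules, because $W$ is flat as a right $\K H$-module. [Indeed, if $P$ is a flat left $\K H$-module then $W \otimes_{\K H} P$ is flat as a left $\kpg$-module; for $P = \K H$ this gives $W$ itself, flat by hypothesis, and flatness is preserved under the relevant base-change because $W$ is right-$\K H$-flat.] Since such modules are acyclic for the functor $B \otimes_{\kpg}(-)$ (flat modules have vanishing higher $\mathrm{Tor}$), I can compute $\mathrm{Tor}_i^{\kpg}(B, W \otimes_{\K H} N)$ by applying $W \otimes_{\K H}(-)$ to a flat (e.g. free) resolution of $N$ over $\K H$ and then applying $B \otimes_{\kpg}(-)$.

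**Step 3 (conclude).** Concretely, take a free resolution $P_\bullet \to N$ of left $\K H$-modules. Applying $W \otimes_{\K H}(-)$ gives a complex $W \otimes_{\K H} P_\bullet$ of left $\kpg$-modules; because $W$ is flat as a right $\K H$-module, this complex is still a resolution, i.e. it is exact, and its terms are flat left $\kpg$-modules (by Step 2). Now $\mathrm{Tor}_i^{\kpg}(B, W \otimes_{\K H} N)$ is the $i$-th homology of $B \otimes_{\kpg}(W \otimes_{\K H} P_\bullet)$. By Step 1 this complex is naturally isomorphic to $\K \otimes_{\K H} P_\bullet$, whose $i$-th homology is precisely $\mathrm{Tor}_i^{\K H}(\K, N) = H_i(H, N)$. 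Setting $N = U$ finishes the proof.

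**Expected main obstacle.** The crux is Step 2: verifying that $W \otimes_{\K H}(-)$ carries $\K H$-flats to $\kpg$-flats, and that $W \otimes_{\K H} P_\bullet$ remains exact. The exactness follows directly from $W$ being flat as a right $\K H$-module. The flatness of $W \otimes_{\K H} P$ as a left $\kpg$-module for $P$ free over $\K H$ reduces to the flatness of $W$ itself as a left $\kpg$-module, which is one of the standing hypotheses; one then needs that tensoring a $\kpg$-flat module with a free right $\K H$-module on the $\K H$-side preserves $\kpg$-flatness, which is where I must be careful about the sidedness of the module structures. I would write this out as a short associativity-plus-direct-sum computation. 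The remaining two bullets are formal once Step 2 is in place: Step 1 is bare associativity of the tensor product together with the hypothesis $B \otimes_{\kpg} W \cong \K$, and Step 3 is the standard identification of a derived functor via an acyclic resolution. It is worth remarking that all three flatness/isomorphism hypotheses on $W$ are used exactly once: right-$\K H$-flatness for exactness of the resolution, left-$\kpg$-flatness for acyclicity, and $B \otimes_{\kpg} W \cong \K$ for the degree-zero identification.
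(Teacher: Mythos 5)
Your proposal is correct and follows essentially the same route as the paper's proof: take a free resolution of $U$ over $\K H$, apply $W \otimes_{\K H} (-)$ to get an exact complex (by right $\K H$-flatness of $W$) whose terms are direct sums of copies of $W$ and hence flat left $\kpg$-modules, use that flat resolutions compute $Tor$, and finish with associativity of the tensor product together with $B \otimes_{\kpg} W \simeq \K$. The only cosmetic difference is that the paper states the direct-sum argument for flatness of the terms explicitly up front rather than deferring it to a remark.
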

 \begin{proof} Let
 	$$
 	{\mathcal P} : \ldots \to P_i \to P_{i-1} \to  \ldots \to P_0 \to U \to 0
 	$$
 	be a free resolution of $U$ as a left $\K H$-module. Since $W$ is a  flat right $\K H$-module
 	the functor $W \otimes_{\K H} - $ is exact and hence we obtain an exact  complex
 	$$
 	{\mathcal S} = W \otimes_{\K H} {\mathcal P} : \ldots \to S_i \to S_{i-1} \to \ldots \to S_0 \to W \otimes_{\K H} U \to 0 , 
 	$$
 	where $S_i = W \otimes_{\K H} P_i $. We view each $S_i$ as a left $\K_{par} G$-module via the left action of $\K_{par} G$ on $W$. Since each $P_i$ is a free $\K H$-module we have that \begin{equation} \label{S} S_i = W \otimes_{\K H} P_i = W \otimes_{\K H} (\oplus \K H) = \oplus W , \end{equation}
 	hence
 	$$
 	S_i \hbox{ is a  flat  left }\K_{par}G-\hbox{module}, 
 	$$
 	 	so ${\mathcal S}$ is a  flat  resolution of the left $\K_{par} G$-module $W \otimes_{\K H} U$.  By \cite[Thm.~7.5]{Rotman} not only projective but flat resolutions can be used to calculate $Tor_i^{\kpg}(B, -)$.  Then
 	$$
 	H_i^{par}(G, W \otimes_{\K H} U) = Tor_i^{\K_{par}G}(B, W \otimes_{\K H} U) 
 	= H_i(B \otimes_{\K_{par} G} {\mathcal S}^{del}), 
 	$$
 	where the upper index $del$ means the deleted resolution,  i.e. the complex in dimension $-1$ is substituted with 0.
 	Note that 
 	 $$B \otimes_{\K_{par} G} S_i = B \otimes_{\K_{par} G} (W \otimes_{\K H} P_i)  \simeq   (B \otimes_{\K_{par} G} W) \otimes_{\K H} P_i ,$$ hence
 	$$
 	H_i(B \otimes_{\K_{par} G} {\mathcal S}^{del}) \simeq H_i((B \otimes_{\K_{par} G} W) \otimes_{\K H} {\mathcal P}^{del} ).
 	$$
 	Since $B \otimes_{\K_{par} G} W$ is the trivial right $\K H$-module $\K$, 
 	$$
 	H_i((B \otimes_{\K_{par} G} W) \otimes_{\K H} {\mathcal P} ) \simeq H_i(\K \otimes_{\K H} {\mathcal P}^{del}  ) = H_i(H, U).
 	$$
 \end{proof}
 
  For a $\K_{par}G$-$\K H$-bimodule $W$ and a right $\K H$-module $U$ consider $Hom_{\K H} (W, U)$ as a right $\kpg$-module in the following way : for $f \in Hom_{\K H} (W, U)$ and $r \in \kpg$ we have $ fr \in Hom_{\K H} (W, U)$ defined by $fr (w) = f ( rw)$.
 
 \begin{theorem}\label{CohomolShapiro}({\bf Cohomological Shapiro lemma for partial actions}) 
 	Let $W$ be a $\K_{par}G$-$\K H$-bimodule for some groups $G$ and $H$  such that $W$ is  projective  as a right $\K H$-module, 
	$W$ is  flat as a left $\K_{par} G$-module and  $B \otimes_{\K_{par} G} W$ is isomorphic to the trivial right $\K H$-module $\K$. 
 	Then for any   right $\K H$-module $U$, 
 	considering  $Hom_{\K H} (W, U)$ as  a right  $\K_{par} G$-module, we have
 	$$
 	H^i_{par}(G, Hom_{\K H} (W, U)) \simeq H^i(H, U),\;\;\;  i \geq 0.
 	$$
 \end{theorem}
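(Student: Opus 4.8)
The plan is to prove the cohomological Shapiro lemma by mimicking the homological proof (Theorem~\ref{HomolShapiro}), but dualizing the constructions: instead of tensoring a resolution of $U$ with $W$ and then tensoring with $B$, I would take a resolution of the coefficient module and apply $\operatorname{Hom}$-functors, using the projectivity of $W$ as a right $\K H$-module where the homological version used flatness. Concretely, let
$$
{\mathcal Q} : 0 \to U \to Q^0 \to Q^1 \to \ldots
$$
be an injective resolution of $U$ as a right $\K H$-module. Since $W$ is projective as a right $\K H$-module, the functor $\operatorname{Hom}_{\K H}(W, -)$ is exact, so applying it to $\mathcal Q$ yields an exact complex of right $\kpg$-modules computing nothing yet; the point will be to argue that $\operatorname{Hom}_{\K H}(W, Q^j)$ is $\operatorname{Ext}_{\kpg}(B,-)$-acyclic, so that this complex is an acyclic (injective-like) resolution usable to compute $H^i_{par}$.

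First I would establish the adjunction isomorphism that plays the role of the tensor-associativity step in the homological case. The key natural identification is
$$
\operatorname{Hom}_{\kpg}\bigl(B, \operatorname{Hom}_{\K H}(W, U)\bigr) \simeq \operatorname{Hom}_{\K H}\bigl(B \otimes_{\kpg} W, U\bigr),
$$
the standard tensor-hom adjunction, and since $B \otimes_{\kpg} W \simeq \K$ (the trivial right $\K H$-module) by hypothesis, the right-hand side is $\operatorname{Hom}_{\K H}(\K, U) = U^H$, recovering the $0$th cohomology. Next I would verify the acyclicity claim: because $W$ is flat as a left $\kpg$-module, and each $Q^j$ is injective over $\K H$, one should be able to show $\operatorname{Ext}^i_{\kpg}\bigl(B, \operatorname{Hom}_{\K H}(W, Q^j)\bigr) = 0$ for $i > 0$. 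The cleanest route is to combine the flatness of $W$ with the adjunction: flatness of $W$ over $\kpg$ together with injectivity of $Q^j$ over $\K H$ should make $\operatorname{Hom}_{\K H}(W, Q^j)$ behave like an injective (or at least $\operatorname{Ext}$-acyclic) $\kpg$-module, via a derived-functor adjunction argument or by checking $\operatorname{Ext}^i_{\kpg}(-, \operatorname{Hom}_{\K H}(W, Q^j))$ vanishes on short exact sequences.

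Granting the acyclicity, the computation then runs in parallel to the homological case: applying $\operatorname{Hom}_{\kpg}(B, -)$ to the deleted resolution $\operatorname{Hom}_{\K H}(W, {\mathcal Q}^{del})$ and taking cohomology gives $H^i_{par}(G, \operatorname{Hom}_{\K H}(W, U))$ on the one hand, while the adjunction isomorphism above identifies the complex with $\operatorname{Hom}_{\K H}(B \otimes_{\kpg} W, {\mathcal Q}^{del}) \simeq \operatorname{Hom}_{\K H}(\K, {\mathcal Q}^{del})$, whose cohomology is precisely $H^i(H, U)$. The main obstacle I expect is the acyclicity step: unlike the homological case, where $S_i = \oplus W$ is manifestly flat and flat resolutions compute $\operatorname{Tor}$ by \cite[Thm.~7.5]{Rotman}, here I cannot simply use an injective resolution of $W \otimes U$ directly, and I must carefully justify that $\operatorname{Hom}_{\K H}(W, -)$ carries injectives to $\operatorname{Ext}_{\kpg}(B, -)$-acyclic modules. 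This is exactly where the hypothesis that $W$ is \emph{flat} as a left $\kpg$-module (rather than merely projective as a right $\K H$-module) must be invoked, presumably through a general lemma stating that for a bimodule that is flat on one side and sends injectives to injectives under $\operatorname{Hom}$, the composite functors compose correctly on the level of derived functors.
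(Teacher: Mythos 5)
Your proposal is correct and follows essentially the same route as the paper: an injective resolution of $U$ over $\K H$, the tensor--hom adjunction $\operatorname{Hom}_{\kpg}(B,\operatorname{Hom}_{\K H}(W,-))\simeq \operatorname{Hom}_{\K H}(B\otimes_{\kpg}W,-)$ combined with $B\otimes_{\kpg}W\simeq\K$, and exactness of $\operatorname{Hom}_{\K H}(W,-)$ from projectivity of $W$ over $\K H$. The acyclicity step you flag as the main obstacle is resolved exactly as you suspect: the adjunction identifies $\operatorname{Hom}_{\kpg}(-,\operatorname{Hom}_{\K H}(W,E^i))$ with the composite of the exact functors $-\otimes_{\kpg}W$ (flatness) and $\operatorname{Hom}_{\K H}(-,E^i)$ (injectivity), so $\operatorname{Hom}_{\K H}(W,E^i)$ is in fact injective over $\kpg$.
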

 
 \begin{proof} Let
 	$$
 	{\mathcal E} : 0 \to U \to E^0 \to E^1 \to E^2 \to \ldots
 	$$
 	be an injective resolution of $U$ as a right  $\K H$-module.
 	Since $B \otimes_{\K_{par} G} W$ is isomorphic to the trivial right $\K H$-module $\K$ we have
 	\begin{align*}
 	H^i(H,U) & = Ext^i_{\K H} (\K, U)  = Ext^i_{\K H} ( B \otimes_{\K_{par} G} W , U) \\
 	& = H^i( Hom_{\K H}( B \otimes_{\K_{par} G} W, {\mathcal E}^{del} )).
 	\end{align*} 	 
 	By the Adjoint Isomorphism Theorem \cite[Thm.~2.75]{Rotman} we obtain
 	$$
 	Hom_{\K H}( B \otimes_{\K_{par} G} W, E^i) \simeq Hom_{\K_{par} G} (B, Hom_{\K H} (W, E^i)), 
 	$$
 	hence 
 	$$
 	H^i( Hom_{\K H}( B \otimes_{\K_{par} G} W, {\mathcal E}^{del} )) \simeq H^i( Hom_{\K_{par} G} (B, Hom_{\K H} (W, {\mathcal E}^{del})) ).
 	$$
 	We claim that  $ Hom_{\K H} (W, E^i)$ is injective as a right $\K_{par} G$-module. This is equivalent 
 	to  the exactness of the functor $Hom_{\K_{par} G} ( - , Hom_{\K H} (W, E^i))$. Consider again  the adjoint isomorphism 
 	$$
 	Hom_{\K_{par} G} ( - , Hom_{\K H} (W, E^i)) \simeq Hom_{\K H} ( - \otimes_{\K_{par}G} W, E^i) .
 	$$
 	Note that the right hand side is the composition of two exact functors 
 	$ - \otimes_{\K_{par} G} W$ and $Hom_{\K H} ( - , E^i)$, because $W$ is  flat as a left $\K_{par} G$-module and $E^i$ is injective as a right  $\K H$-module.
 	As a composition of two exact functors is exact, we obtain that $Hom_{\K H} (W, E^i)$ is injective as a right  $\K_{par} G$-module. 
 	Consequently, since $W$ is a projective right $\K H$-module,  $Hom_{\K H} (W, - )$ is an exact functor and	$Hom_{\K H} (W, {\mathcal E})$ is an injective resolution of the right $\K_{par} G$-module $Hom_{\K H} (W, U)$.
Then
 	\begin{align*}
 	H^i (H,U) & \simeq H^i( Hom_{\K_{par} G} (B, Hom_{\K H} (W, {\mathcal E}^{del} )) \\
 	 & \simeq Ext_{\K_{par}G}^i(B, Hom_{\K H} (W, U)))\\
 	 & = H^i_{par} (G, Hom_{\K H} (W, U)).
 	\end{align*}
 \end{proof}

\section{Applications: the bimodule $W$} \label{section-bimodule}
 	We are going to show the existence of a bimodule $W$ as in Theorem~\ref{HomolShapiro} and Theorem~\ref{CohomolShapiro}. First we recall some notions from  \cite{DEP} and \cite{DZh}. Given an arbitrary groupoid $\Gamma $ and a field $\K,$ the groupoid algebra $\K \Gamma $ is defined as the vector space with basis formed by  the morphisms of $\Gamma ,$ which are multiplied by the rule

\[\gamma_1\cdot\gamma_2=\left\{
\begin{array}{ll}\gamma_1 \circ \gamma_2,&\text{if}\ \gamma_1 \circ \gamma_2 \;\text{exists in }  \Gamma, 
\\ 0,  &  \text{otherwise}.
\end{array}
\right.\] As it is usual for functions,  we apply morphisms from right to left, so that in the composition 
$\gamma_1 \circ \gamma_2$  first  $\gamma _2$  acts and then $\gamma _1.$

For a group  $G$ consider the groupoid $\Gamma (G)$ whose objects (seen as vertices) are subsets of $G$ which contain $1$ and whose morphisms are  pairs $(A,g)$ where $A$ is a subset of $G$  which contains  $1$ and  $g\m.$ Then both $A$ and $gA$ are    objects in $\Gamma (G)$ and the morphism   $(A,g)$ can be seen as an arrow from $A$ to $gA$, which is interpreted as multiplication by $g$ from the left. The product $(B,g') \circ (A,g)$ is defined in $\Gamma (G)$ if and only if $B =gA, $ i.e. when the range $gA$ of $(A,g)$ coincides with the domain $B$ of $(B,g').$ In this case  $(B,g') \circ (A,g) = (gA,g') \circ (A,g) = (A, g' g).$  We have the decomposition of the groupoid algebra 
$$
 \K \Gamma(G) = \oplus \kd,$$ into a direct sum of two-sided ideals, where $\Delta$ runs over the connected components of $\Gamma (G).$  

Let now $\Delta $ be a connected component of $\Gamma (G)$ whose set of vertices 
${\mathcal V}_{\Delta }$ is  finite. 
By \cite[Theorem 2.2]{DZh} the map 
\begin{equation}\label{lambdaH}
  \lambda_{\Delta}(g) = \sum_{A \in {\mathcal V}_{\Delta }, g^{-1} \in A} (A, g)
\end{equation}
   is a partial representation  $G \to \K \Delta ,$ where the sum is taken  over all vertices $A$ of $\Delta $ which contain $g\m.$ By the universal property of $\K _{par} G,$ the map $\lambda_{\Delta}$ 
  extends to a homomorphism of $\K$-algebras  
$$\K _{par} G \to \kd, \; [g]\mapsto \lambda_{\Delta}(g),$$ which, with a slight abuse of notation, will be denoted by the same symbol $\lambda_{\Delta}$.  
We consider $\kd$ as a left $\kpg$-module by means of $\lambda_\Delta$, i.e.,
 \begin{equation}\label{KparG-Delta}
[g] \cdot a = \lambda_{\Delta } (g) a
 \end{equation}
for $a \in \kd$ and $g \in G$.

Let $n$ be the number of vertices in   $\Delta $ and  fix a vertex $A\in {\mathcal V}_{\Delta }.$ Then there exist $g_1, g_2, \ldots , g_n \in G,$ with $g\m_i \in A,$ such that 
$ {\mathcal V}_{\Delta } = \{g_1 A, g_2A, \ldots, g_nA \}.$  It is convenient to  assume that $g_1=1.$
Let $H$ be the stabilizer of  $A,$ i.e. 
$$H= \{h \in G : hA =A \}.$$ Then $H$  is a subgroup of $G$ and the groupoid algebra $\kd$ is isomorphic to the  algebra $M_n(\K H)$ of all $n\times n$-matrices with entries in the group algebra $\K H.$ The isomorphism is given as follows. Let $(g_iA , g)$ be a morphism in $\Delta$ whose range is $g g_i A = g_j A$ for some  $j \in \{1, \ldots , n \}.$ Notice that $g\m_j g g_i =h_i\in H$ and write 
$g= g_j h_i g\m_i.$ Then the desired isomorphism is given by
\begin{equation}\label{isomorphism.kdelta.matrices}
\eta : \kd \to M_n (\K H ),  \ (g_iA , g) \mapsto  E_{ji}(h_i), 
\end{equation}
 where $E_{ji}(h_i)$ stands for the $n\times n$-matrix  whose unique non-zero entry $h_i$  is placed in the position $(j,i).$

  For $g \in G$ denote by $M_g$ the matrix $\lambda_{\Delta}([g])$ after identifying $\kd$ with $M_n(\K H)$.  Then  
\begin{equation}\label{elementaryRepr}
M_g=  \sum _{g _iA\ni g\m} E_{ji}(h_i),
\end{equation} in which the sum is taken  over all vertices $g_iA$ of $\Delta$ which contain $g\m.$ 
 Thus $M_g$ is a monomial  matrix over $H$, 
 in the sense that each row and each column of $M_g$ contains at most one non-zero entry, which is an element of $H.$ Denote by  $M_{g}^{\ast}$ the matrix obtained from $M_{g}$ by transposition and replacement of each entry $h\in H$ by its inverse.  
  
  \begin{lemma} \label{action} 
  Let $G$ be an arbitrary  group and let $M_g$ be as defined above.   For every $g \in G$ we have that  $M_{g\m} = M_{g}^{\ast}.$  
  \end{lemma}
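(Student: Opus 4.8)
The plan is to prove the identity by writing both monomial matrices $M_{g^{-1}}$ and $M_g^{\ast}$ explicitly in terms of the elementary matrices $E_{ji}(h)$ and matching their non-zero entries one at a time, via an explicit bijection between the two index sets. By \eqref{elementaryRepr} we have $M_g = \sum_i E_{j(i),i}(h_i)$, where $i$ ranges over those indices with $g^{-1} \in g_i A$, the index $j(i)$ is determined by the equality of vertices $g\, g_i A = g_{j(i)} A$, and $h_i = g_{j(i)}^{-1} g\, g_i \in H$ (so that $g = g_{j(i)} h_i g_i^{-1}$, as in \eqref{isomorphism.kdelta.matrices}). Applying the operation $\ast$, namely transposition followed by inversion of the group entries, then gives
$$
M_g^{\ast} = \sum_{i} E_{i,j(i)}\big(h_i^{-1}\big), \qquad h_i^{-1} = g_i^{-1} g^{-1} g_{j(i)}.
$$

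First I would write $M_{g^{-1}}$ in the analogous form: applying \eqref{elementaryRepr} to $g^{-1}$, the matrix $M_{g^{-1}} = \sum_k E_{l(k),k}(h_k')$ is a sum over indices $k$ with $g \in g_k A$, where $l(k)$ is determined by $g^{-1} g_k A = g_{l(k)} A$ and $h_k' = g_{l(k)}^{-1} g^{-1} g_k$. The heart of the argument is then the map $i \mapsto j(i)$, which I claim is a bijection from $\{\, i : g^{-1} \in g_i A \,\}$ onto $\{\, k : g \in g_k A\,\}$, with inverse $k \mapsto l(k)$. Well-definedness needs that $g\, g_i A$ is again a vertex of $\Delta$: since $g^{-1} \in g_i A$, the pair $(g_i A, g)$ is a morphism of $\Gamma(G)$ with source $g_i A \in \Delta$, and because $\Delta$ is a connected component its range $g\, g_i A$ lies in $\Delta$ as well; moreover $g \in g\, g_i A = g_{j(i)} A$ because every vertex contains $1$, so $j(i)$ is a legitimate index for $M_{g^{-1}}$. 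The computations $g^{-1} g_{j(i)} A = g_i A$ and $g\, g_{l(k)} A = g_k A$ then show $l \circ j = \mathrm{id}$ and $j \circ l = \mathrm{id}$, establishing the bijection.

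Finally I would match the entries: for a fixed $i$ and $k = j(i)$, the equality $g^{-1} g_{j(i)} A = g_i A$ forces $l(k) = i$, so the $E_{l(k),k}$ term of $M_{g^{-1}}$ sits in position $(i, j(i))$, exactly the position of the corresponding term of $M_g^{\ast}$; and the entries agree since $h_k' = g_{l(k)}^{-1} g^{-1} g_k = g_i^{-1} g^{-1} g_{j(i)} = h_i^{-1}$. Summing over the bijection yields $M_{g^{-1}} = M_g^{\ast}$. The only genuinely delicate point is the well-definedness of the bijection, i.e. ensuring that the vertices $g\, g_i A$ and $g^{-1} g_k A$ really belong to $\Delta$; everything else is the bookkeeping of indices, which is forced once the bijection is in place. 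Conceptually this is the statement that $\eta \circ \lambda_\Delta$ intertwines the natural involution $[g] \mapsto [g^{-1}]$ of $\kpg$ with the involution $\ast$ on $M_n(\K H)$, whose groupoid incarnation sends a morphism $(g_iA,g)$ to its inverse $(g_{j(i)}A, g^{-1})$.
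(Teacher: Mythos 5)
Your proof is correct and follows essentially the same route as the paper: the paper's argument is precisely the observation that the arrows $(g_iA,g)$ of $M_g$ are indexed by the vertices containing $g^{-1}$, that their targets are the vertices containing $g$, and that $(g_jA,g^{-1})$ is the groupoid inverse of $(g_iA,g)$, which forces $M_{g^{-1}}=\sum_{g_jA\ni g}E_{ij}(h_i^{-1})=M_g^{\ast}$. You simply make the underlying index bijection $i\mapsto j(i)$ and the entry matching $h_k'=h_i^{-1}$ fully explicit, which is the same argument with the bookkeeping written out.
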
 

\begin{proof} 
     While in (\ref{elementaryRepr}) $g _iA$ runs over all vertices of $\Delta$ which contain $g\m, $  the target vertices $g_jA = gg_iA$ of the arrows  $(g_iA,g)$ are all those vertices of $\Delta$ which contain $g.$
Evidently,   $(g_jA,g\m)$ is the inverse of   $(g_iA,g)$ in the groupoid $\Delta,$ and, consequently,   it follows from   (\ref{elementaryRepr})  that 
\[
 M _{g\m} = \sum _{g_jA\ni g} E_{ij}(h\m_i) =M^{\ast}_g,
 \]
as desired.
  
\end{proof}

\begin{lemma} \label{Bdelta}
Let $\Delta,$ $\lambda _{\Delta}$
 and $B$ be as above, with $G$ being an arbitrary   group. Then $\lambda _{\Delta } (B)$ is the subalgebra of $\kd$ 
 whose $\K$-basis is formed by 
the elements $(C, 1),$ where $C$ runs over $ {\mathcal V}_{\Delta }.$
\end{lemma}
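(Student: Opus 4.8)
The plan is to compute the images $\lambda_\Delta(e_g)$ of the generators $e_g = [g][g^{-1}]$ of $B$ explicitly inside $\K\Delta$, and then to identify the subalgebra they generate. First I would evaluate $\lambda_\Delta(e_g) = \lambda_\Delta([g])\lambda_\Delta([g^{-1}])$ using \eqref{lambdaH}. In the product
\[
\lambda_\Delta([g])\,\lambda_\Delta([g^{-1}]) = \Big(\sum_{A\in{\mathcal V}_\Delta,\ g^{-1}\in A}(A,g)\Big)\Big(\sum_{B\in{\mathcal V}_\Delta,\ g\in B}(B,g^{-1})\Big),
\]
the groupoid composition $(A,g)\circ(B,g^{-1})$ is defined precisely when the range $g^{-1}B$ of $(B,g^{-1})$ equals the domain $A$ of $(A,g)$, i.e. when $B = gA$, and in that case $(A,g)\circ(gA,g^{-1}) = (gA,1)$. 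Reindexing by $C = gA$, and checking that as $A$ ranges over the vertices containing $g^{-1}$ the vertex $C=gA$ ranges exactly over the vertices containing $g$, I expect to obtain
\[
\lambda_\Delta(e_g) = \sum_{C\in{\mathcal V}_\Delta,\ g\in C}(C,1),
\]
the sum of the identity morphisms at those vertices of $\Delta$ that contain $g$.

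Next I would record that the identity morphisms $(C,1)$, $C\in{\mathcal V}_\Delta$, are pairwise orthogonal idempotents of $\K\Delta$ (since $(C,1)\circ(C',1)$ is defined only when $C=C'$) whose sum is $1_{\K\Delta}$, so they span a commutative subalgebra $D = \bigoplus_{C\in{\mathcal V}_\Delta}\K(C,1)$, which I would identify with the function algebra on the finite set ${\mathcal V}_\Delta$. Since each $\lambda_\Delta(e_g)$ lies in $D$ and $D$ is a subalgebra, the inclusion $\lambda_\Delta(B)\subseteq D$ is immediate; under the identification, $\lambda_\Delta(e_g)$ corresponds to the indicator of $T_g := \{C\in{\mathcal V}_\Delta : g\in C\}$.

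The remaining point, and the one I expect to be the main obstacle, is the reverse inclusion $D\subseteq\lambda_\Delta(B)$, namely that the indicators $\{T_g\}_{g\in G}$ generate the whole function algebra on ${\mathcal V}_\Delta$. The key is a separation argument: given distinct vertices $C\neq C'$, their symmetric difference (as subsets of $G$) is nonempty, so some $g$ lies in exactly one of them, and then $T_g$ (or its complement $\{C : g\notin C\}$, both available since $1_{\K\Delta}\in\lambda_\Delta(B)$) takes value $1$ at $C$ and $0$ at $C'$. As ${\mathcal V}_\Delta$ is finite and $\lambda_\Delta(B)$ is a unital subalgebra containing this point-separating family, I would run the finite-dimensional Stone--Weierstrass argument: for a fixed $C$, multiplying over all $C'\neq C$ the elements of $\lambda_\Delta(B)$ that equal $1$ at $C$ and $0$ at $C'$ produces the point-indicator $(C,1)$. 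This shows each $(C,1)\in\lambda_\Delta(B)$, giving $D\subseteq\lambda_\Delta(B)$ and hence $\lambda_\Delta(B) = D$, which is exactly the claimed subalgebra with $\K$-basis $\{(C,1) : C\in{\mathcal V}_\Delta\}$.
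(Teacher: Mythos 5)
Your proposal is correct, and the first half (the computation $\lambda_\Delta(e_g)=\sum_{C\ni g}(C,1)$ via the groupoid composition rule, and the observation that the $(C,1)$ are orthogonal idempotents summing to $1_{\K\Delta}$) coincides with the paper's. Where you diverge is in the surjectivity step. The paper never uses the complements $1-\lambda_\Delta(e_g)$ in this proof: it works only with products of the $\lambda_\Delta(e_t)$, derives the identity $\lambda_\Delta\bigl(\prod_{t\in X}e_t\bigr)=\sum_{C\supseteq X}(C,1)$, and then recovers $(C_0,1)$ by a downward induction on the depth function $d(C_0)$ in the inclusion order on $\mathcal{V}_\Delta$, subtracting off the terms $(C,1)$ with $C\supsetneq C_0$ that are already known to lie in $\lambda_\Delta(B)$. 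You instead run the finite Stone--Weierstrass argument: the indicators $T_g$ together with $1_{\K\Delta}$ separate the points of the finite set $\mathcal{V}_\Delta$ (since distinct vertices are distinct subsets of $G$), so multiplying, for fixed $C$, one separating element per $C'\neq C$ yields the point indicator $(C,1)$. Both arguments are valid and of comparable length; the paper's version has the side benefit of producing the explicit formula $\lambda_\Delta\bigl(\prod_{t\in X}e_t\bigr)=\sum_{C\supseteq X}(C,1)$, which is reused later (e.g.\ to show $\lambda_\Delta\bigl(\prod_{r\in A}e_r\bigr)=(A,1)$ for finite $A$ in the projectivity section), whereas yours is the more standard separation argument and avoids introducing the rank function $d$. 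One small point worth making explicit in your write-up is that $1_{\K\Delta}=\lambda_\Delta(e_1)\in\lambda_\Delta(B)$, which you need in order to form the complementary indicators; this follows immediately since every vertex contains $1$.
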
  
\begin{proof}
We need to show that $\{ (g_i A,1) ; i=1,2, \ldots, n \}$ is a basis for $\lambda_\Delta(B)$. 
First of all, we have the formula
\begin{equation}\label{formula.lambda-delta(e_g)}
\lambda_\Delta (e_g)  = \sum _{ g \in C, C \in {\mathcal V}_{\Delta }} (C, 1) =  \sum_{g_i A \ni g} (g_i A, 1)  
\end{equation}
since, by the definition of $\lambda_\Delta$ and the expression of the product in $\K \Gamma(G)$, 
\begin{align*}
\lambda_\Delta (e_g) & = \lambda_\Delta ([g])\lambda_\Delta ([g^{-1}])
 = \sum_{g_iA \ni g^{-1}, g_jA \ni g} (g_iA, g) (g_jA,g^{-1} ) = \sum_{g_jA \ni g} (g_j A, 1).
\end{align*}
Consequently, for every finite $X \subseteq G$,
\begin{equation}\label{soma}
\lambda_\Delta \left( \prod_{t \in X}  e_t\right) = \prod_{t \in X} \lambda_\Delta (e_t) =   \sum_{C\in {\mathcal V}_{\Delta }, \  C \supseteq X} (C, 1) ,
\end{equation}
assuming that this sum is zero if no such vertex $C $ exists. 

Fixing  now an arbitrary  $ C_0 \in {\mathcal V}_{\Delta }$ we shall show that $(C_0, 1) \in \lambda _{\Delta } (B).$ 

We shall proceed  using  induction on $d(C_0),$ where  the function $d:  \mathcal{V}_{\Delta} \to \mathbb{N}$ was defined in  \cite[p. 2571]{DdLP} as follows. If $C \in \mathcal{V}_{\Delta}$ is maximal, that is, if
\[ C \subseteq  \tilde{C}  \text{ for some } \tilde{C} \in \mathcal{V}_{\Delta}  \text{ implies } C = \tilde{C}, \]
we  set $d(C) = 0.$  Otherwise, we define
\[d(C) = \max\{ m \in \mathbb{N} \mid \exists D_1, \ldots, D_m \in \mathcal{V}_\Delta \text{, with } D_m \supsetneq \ldots \supsetneq D_1 \supsetneq C\}.\] 
 Since ${\mathcal V}_{\Delta }$ is finite, there is a finite subset $X_0 \subseteq C_0$ such that if $X_0 \subseteq C$ for some $C\in {\mathcal V}_{\Delta }$ then $C_0 \subseteq C$.
Indeed, let $\mathcal I \subseteq \{1, \ldots , n\}$  be the set of all indexes $j$ such that $C_0 \not\subseteq g_{j}A.$ 
For each $j\in \mathcal I$  choose an element $t_j \in C_0 \setminus g_{j}A.$ Then $X_0 = \{t_j \mid j \in \mathcal I  \}$ is the desired finite set.

If $d(C_0) = 0,$ then   (\ref{soma}) for $X = X_0$ 
 readily yields $(C_0,1) \in \lambda _{\Delta } (B),$ so  suppose that $d(C_0) >  0$ and $(C,1) \in  \lambda _{\Delta } (B)$ for all $C \in   \mathcal{V}_{\Delta} $ with $d(C) < d(C_0).$  Then we conclude from \eqref{soma} applied for $X = X_0$ that 

$$ (C_0, 1) = \lambda_\Delta \left( \prod_{{  t \in X_0} }  e_t\right)  -  \sum_{C\in {\mathcal V}_{\Delta }, \   C \supsetneq C_0 } (C, 1) \in  \lambda _{\Delta } (B), $$ completing our proof.

\end{proof}   

\begin{cor} \label{dimension.Bdelta} With the notation of Lemma~\ref{Bdelta}, we have that 
 $\dim_{ \K} (\lambda_\Delta(B)) = n$.
\end{cor}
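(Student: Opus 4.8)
The plan is to read the statement off directly from Lemma~\ref{Bdelta}. That lemma already exhibits a $\K$-basis of $\lambda_\Delta(B)$, namely the family $\{(C,1) \mid C \in {\mathcal V}_{\Delta}\}$, so the corollary becomes a matter of counting these basis vectors and confirming that they are genuinely linearly independent.

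First I would recall that, by definition, $n$ is the number of vertices of $\Delta$, i.e. $n = |{\mathcal V}_{\Delta}|$. Each element $(C,1)$ is the identity morphism at the vertex $C$, and for distinct vertices $C \neq C'$ these are distinct morphisms of the groupoid $\Delta$. Since $\K\Delta$ is by definition the vector space whose basis is the set of all morphisms of $\Delta$, distinct morphisms are automatically $\K$-linearly independent; hence $\{(C,1) \mid C \in {\mathcal V}_{\Delta}\}$ is a linearly independent subset of $\K\Delta$ of cardinality $n$. Combined with Lemma~\ref{Bdelta}, which guarantees that this same family spans $\lambda_\Delta(B)$, it is a basis, and therefore $\dim_{\K}(\lambda_\Delta(B)) = n$.

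I do not anticipate any real obstacle: once Lemma~\ref{Bdelta} is in hand the corollary is a pure counting argument. The only point needing (minor) attention is the linear independence of the proposed basis, which is immediate from the very definition of the groupoid algebra $\K\Delta$ as having the morphisms of $\Delta$ as a $\K$-basis.
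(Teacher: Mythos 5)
Your proposal is correct and follows exactly the paper's own argument: Lemma~\ref{Bdelta} gives that the elements $(C,1)$, $C \in {\mathcal V}_{\Delta}$, span $\lambda_\Delta(B)$, and they are linearly independent because distinct morphisms of $\Delta$ are part of the defining basis of $\K\Delta$, so the dimension is $n$. The paper's proof is a one-line version of the same counting argument.
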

\begin{proof}
It is clear that the linearly independent elements $(g_iA,1)$ generate the image of $B$ by $\lambda_\Delta$, and hence $\dim_{ \K}  (\lambda_\Delta (B) ) = n$. \end{proof}

The free right $\K H$-module  
  $$
  W = (\K H)^n = M_{n,1}(\K H),
  $$ viewed as  column matrices, possesses   a left $M_n(\K H)$-action given by matrix multiplication, transforming $W$ into an $M_n(\K H)$-$\K H$-bimodule. Then the $\K$-algebra homomorphism  
$$
  \lambda_{\Delta} : \K_{par} G \to \kd,
  $$ followed by the  isomorphism
  $$  \eta : \kd \to M_n(\K H),$$
endows $W$ with a structure of a  $\K_{par} G$-$\K H$-bimodule, called  {\it elementary}. The latter gives rise to an elementary partial matrix representation considered in \cite{DZh}.

We recall the next fact, whose statement  is a slight modification of Theorem 2.3 from \cite{DZh}.
 
 \begin{theorem}  \label{decomposition} 
 	Let $G$ be an arbitrary group and $V$ be an irreducible (resp. indecomposable) left $\K_{par} G$-module, which is  finite dimensional over $\K .$ Then there is a unique connected component $\Delta $ of $\Gamma (G)$ with a finite number of vertices, an elementary   bimodule $_{\K_{par} G} W _{\K H}$ related to $\Delta$  and a left   irreducible (resp. indecomposable) $\K H$-module $U,$ finite dimensional over $\K , $ such that
 	$V  \simeq W \otimes_{\K H} U$ as left  $\K_{par} G$-modules.   
 	\end{theorem}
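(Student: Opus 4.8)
The plan is to reduce a finite-dimensional indecomposable (resp.\ irreducible) module to a module over the matrix algebra $M_n(\K H)\cong\kd$ and then invoke Morita theory through the surjection $\lambda_\Delta$. First I would exploit the commuting idempotents $e_g=[g][g^{-1}]$. On $V$ they act as pairwise commuting idempotent operators, and since $\dim_\K V<\infty$ the subalgebra of $\operatorname{End}_\K(V)$ they generate is finite-dimensional and commutative; hence $V$ splits as a finite direct sum of their joint eigenspaces. Concretely, for a subset $A\subseteq G$ with $1\in A$ put
$$V_A=\{v\in V:\ e_g v=v\ \text{for}\ g\in A,\ e_g v=0\ \text{for}\ g\notin A\},$$
and let $\mathcal S$ be the finite set of $A$ with $V_A\neq 0$, so that $V=\bigoplus_{A\in\mathcal S}V_A$. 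Using $[g]e_h=e_{gh}[g]$ together with $[g][g^{-1}][g]=[g]$ (relation~1 with $h=g^{-1}$), one checks that $[g]$ kills $V_A$ unless $g^{-1}\in A$, in which case $[g]$ maps $V_A$ into $V_{gA}$; these are exactly the morphisms $(A,g)$ of $\Gamma(G)$.

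Next I would pin down the component. Since $[g^{-1}][g]=e_{g^{-1}}$ restricts to the identity on $V_A$ whenever $g^{-1}\in A$, the map $[g]\colon V_A\to V_{gA}$ is injective there; in particular $V_A\neq 0$ forces $V_{gA}\neq 0$, so $\mathcal S$ is closed under the morphisms of $\Gamma(G)$. Vertices lying in distinct connected components are joined by no morphism, so for each component the sum of the corresponding $V_A$ is a $\kpg$-submodule, giving a direct-sum decomposition of $V$. Indecomposability (a fortiori irreducibility) therefore confines $\mathcal S$ to a single connected component $\Delta$, and the closure just noted together with connectedness yields $\mathcal S=\mathcal V_\Delta$; as $\mathcal S$ is finite, $\Delta$ has finitely many vertices. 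Consequently the $\kpg$-action on $V$ factors through $\lambda_\Delta\colon\kpg\to\kd\cong M_n(\K H)$, with the idempotent $(C,1)$ of Lemma~\ref{Bdelta} acting as the projection onto $V_C$.

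Finally I would recover $U$. Writing $\mathcal V_\Delta=\{g_1A,\dots,g_nA\}$ with $g_1=1$ and $A$ the distinguished vertex, set $U=V_{A}$. For $h\in H$ one has $hA=A$, hence $h^{-1}\in A$ and $e_{h^{-1}}$ acts as the identity on $U$, so the partial-representation relations collapse to the genuine group relations and $U$ is an honest left $\K H$-module, finite-dimensional over $\K$. The maps $[g_i]\colon V_{A}\to V_{g_iA}$ are isomorphisms (inverse built from $[g_i^{-1}]$), and a direct check shows that sending $v\in V_{g_iA}$ to $[g_i^{-1}]v$ placed in the $i$-th coordinate is a $\kpg$-module isomorphism $V\xrightarrow{\sim}(\K H)^n\otimes_{\K H}U=W\otimes_{\K H}U$, since it intertwines the action of $[g]$ with multiplication by the monomial matrix $M_g$ of~\eqref{elementaryRepr}. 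Uniqueness of $\Delta$ is clear because $\mathcal V_\Delta=\mathcal S$ is intrinsic to $V$; $H$ is determined up to conjugation by the chosen vertex, and $U$ up to isomorphism. As $W\otimes_{\K H}-$ is the Morita equivalence $\K H\text{-}\mathrm{mod}\to M_n(\K H)\text{-}\mathrm{mod}$ (its values viewed as $\kpg$-modules via $\lambda_\Delta$), it carries irreducibles to irreducibles and indecomposables to indecomposables and multiplies $\K$-dimension by $n$, so $V$ is irreducible (resp.\ indecomposable) exactly when $U$ is.

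The main obstacle is the reduction step of the second paragraph: verifying that the joint eigenspace data of the $e_g$ on a finite-dimensional module organizes precisely into the vertices and morphisms of a single finite connected component of $\Gamma(G)$, and that the resulting $[g]$-action is literally the monomial matrix $M_g$. Once the action is known to factor through $\lambda_\Delta$, the Morita-theoretic bookkeeping that produces $W\otimes_{\K H}U$ and transfers the irreducibility/indecomposability is routine.
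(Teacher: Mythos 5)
Your argument is correct, but note that the paper does not actually prove Theorem~\ref{decomposition}: it is stated as ``a slight modification of Theorem 2.3 from \cite{DZh}'' and the text immediately after it reduces the statement to \cite[Theorems 2.2 and 2.3]{DZh}, so the only ``proof'' in the paper is a citation. What you have written is a self-contained reconstruction of the argument that the cited source carries out, and it is the natural one: decompose $V$ into joint eigenspaces $V_A$ of the commuting idempotents $e_g$ (finitely many nonzero since $\dim_\K V<\infty$), observe via $[g]e_h=e_{gh}[g]$ and $[g][g^{-1}][g]=[g]$ that $[g]$ implements an isomorphism $V_A\to V_{gA}$ when $g^{-1}\in A$ and kills $V_A$ otherwise, conclude that the support of the decomposition is a union of vertex sets of connected components and hence, by indecomposability, equals $\mathcal{V}_\Delta$ for a single finite component $\Delta$, then factor the action through $\lambda_\Delta:\kpg\to\kd\simeq M_n(\K H)$ and finish by Morita theory with $U=V_A$. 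All the individual steps check out (in particular $(A,g)\mapsto [g]|_{V_A}$ really is a representation of $\kd$, because $[g'][g]=[g'g]$ holds on $V_A$ once $e_{g^{-1}}$ acts as the identity there, and $\eta(g_iA,g_i^{-1})=E_{1i}(1)$ makes your coordinate map intertwine $[g]$ with $M_g$). The one place where you are slightly terse is the verification, which you yourself flag, that the joint-eigenspace data assembles into a $\kd$-module structure; this is a routine computation with the defining relations and does not hide a gap. So the proposal is a valid proof, and arguably more informative than the paper, which delegates the entire content to \cite{DZh}.
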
 

 \noindent  It is shown in   \cite[Theorem 2.2]{DZh} that  for each irreducible (indecomposable)  $\K_{par} G$-module $V,$ which has a finite dimension over $\K ,$ there exists a unique connected component $\Delta $ of $\Gamma (G),$ with a finite number of vertices, and a  $\kd$-module structure on $V$ such that   $_{\K_{par} G} V$ is obtained from $_{\kd } W$ $\otimes_{\K H} U$ by means of $  \lambda_{\Delta},$ so that  Theorem~\ref{decomposition} is a  direct consequence of \cite[Theorem 2.3]{DZh}.  Notice that the map  $$ M : G \to M_n(\K H), \;\;\; g \mapsto M_g=  \sum _{g_iA\ni g\m} E_{ji}(h_i),$$
is the elementary matrix representation afforded by the  bimodule  $_{\K_{par} G} W _{\K H}$.

 Given $d \in \ld(B)$ and 
$\sum_{\gamma \in \Delta} \alpha_{\gamma} \gamma \in \K \Delta $, define the action
\begin{equation}\label{right.kDelta.structure}
d \ract  (\sum_{\gamma \in \Delta} \alpha_{\gamma} \gamma ) = \sum_{\gamma \in \Delta} \alpha_{\gamma} \gamma^{-1} d \gamma. 
\end{equation}

Using Lemma \ref{Bdelta} it is easy to check that 
$\ld(B)$ is a right $\K \Delta$-module with respect to \eqref{right.kDelta.structure}: it suffices to check that $(tD,t^{-1})(C,1)(D,t) \in\ld(B)$, for any $(C,1), (D,t) \in \Delta$, and this product is either zero or equal to $(D,1) \in \ld(B)$.

The next fact is crucial for our construction of $W$   as in Theorem~\ref{HomolShapiro} and  Theorem~\ref{CohomolShapiro}, and since its proof is long it will be given in Section \ref{section.lemma7.7}. 
 
\begin{theorem} \label{isomorphism.Bdelta}
	Let $\Delta $ be a connected component of  $\Gamma (G)$ with a finite number of vertices and $\ld $ be as in  \eqref{lambdaH}. Then there is an isomorphism of right $\kd$-modules 
	\[
	\lambda_\Delta (B) \simeq B \otimes_{\K_{par} G} \kd,
	\]  where the left   ${\K_{par} G}$-module structure on $\kd $ is given by \eqref{KparG-Delta}.
\end{theorem}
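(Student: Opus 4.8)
The plan is to exhibit the isomorphism explicitly. I would define
$$\Phi : B \otimes_{\kpg} \kd \to \ld(B), \qquad \Phi(b \otimes a) = \ld(b) \ract a,$$
where $\ract$ is the right action of \eqref{right.kDelta.structure}, and show it is an isomorphism of right $\kd$-modules. First I would check that $(b,a)\mapsto \ld(b)\ract a$ is $\kpg$-balanced and lands in $\ld(B)$. The target is correct by the remark following \eqref{right.kDelta.structure}, and right $\kd$-linearity is automatic since $\ract$ is a right action. Balancing reduces, for $b\in B$ and $g\in G$, to $\ld([g^{-1}]b[g])\ract a = \ld(b)\ract(\ld(g)a)$; as $\ld$ is an algebra homomorphism this reads $(\ld(g^{-1})\ld(b)\ld(g))\ract a = \ld(b)\ract(\ld(g)a)$, and a direct computation in $\kd$ using \eqref{elementaryRepr}, Lemma~\ref{action} and the groupoid composition rule shows both sides evaluate, on a morphism $(C,t)$, to the coefficient of $(gtC,1)$ in $\ld(b)$ times $(C,1)$ (and to $0$ when the relevant composite is undefined). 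Surjectivity is then immediate: since $\one=\sum_{C\in\VD}(C,1)$ and the $(C,1)$ are orthogonal idempotents, a short computation gives $\Phi(b\otimes\one)=\ld(b)\ract\one=\ld(b)$, so the image is all of $\ld(B)$.

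The engine for the reverse bound is that the right $\kpg$-action of $B$ on itself is simply multiplication. Indeed $y\cdot e_h = (y\cdot[h])\cdot[h^{-1}] = [h]([h^{-1}]y[h])[h^{-1}] = e_h y e_h = e_h y$, and multiplicativity of the action gives $y\cdot b = by$ for all $b,y\in B$; in particular $1\cdot b = b$. Hence inside the tensor product
$$b\otimes a = (1\cdot b)\otimes a = 1\otimes \ld(b)a \qquad (b\in B,\ a\in\kd).$$
Two consequences follow: every element of $B\otimes_{\kpg}\kd$ is of the form $1\otimes x$ with $x\in\kd$; and, taking $a=\one$, one gets $z\otimes\one = 1\otimes\ld(z)$ for $z\in B$, so $\ld(z)=0$ forces $z\otimes\one=0$.

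I would then prove injectivity by showing $B\otimes_{\kpg}\kd$ is spanned by the $n$ elements $1\otimes(C,1)$, $C\in\VD$. For a morphism $(C,t)$ of $\Delta$ one has $t^{-1}\in C$, so $\ld(t)(C,1)=(C,t)$ while \eqref{formula.lambda-delta(e_g)} gives $\ld(e_{t^{-1}})(C,1)=(C,1)$; combining these with the identity above yields
$$1\otimes(C,t) = 1\otimes \ld(t)(C,1) = e_{t^{-1}}\otimes(C,1) = 1\otimes \ld(e_{t^{-1}})(C,1) = 1\otimes(C,1).$$
Since every element is a combination of the $1\otimes(C,t)$, the $1\otimes(C,1)$ span, so $\dim_{\K}(B\otimes_{\kpg}\kd)\le n = \dim_{\K}\ld(B)$ by Corollary~\ref{dimension.Bdelta}. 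Together with the surjectivity of $\Phi$ this forces $\Phi$ to be an isomorphism. (Equivalently, $\ld(b)\mapsto b\otimes\one = 1\otimes\ld(b)$ is a well-defined two-sided inverse, well-definedness being precisely the second consequence of the previous paragraph.)

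I expect the main obstacle to be the balancing computation for $\Phi$, namely pinning down how the conjugation-type action $\ract$ interacts with the monomial matrices $\ld(g)$; this groupoid bookkeeping is what makes the dedicated argument in Section~\ref{section.lemma7.7} lengthy, whereas the dimension count itself is short once the identity $b\otimes a = 1\otimes\ld(b)a$ is in hand.
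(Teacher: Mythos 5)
Your proof is correct, and it takes a genuinely shorter route than the paper's for the hard direction. Both arguments start from the same map $\varphi(b\otimes a)=\ld(b)\ract a$ and verify balancedness on the generators $[g]$ by the same groupoid computation (the paper's version uses $\ld(g\m)=\ld(g)^*$ and the orthogonality of the idempotents $(C,1)$ to see that $\ld(g)^*\ld(b)\ld(g)=\ld(b)\ract\ld(g)$; your coefficient-of-$(gtC,1)$ bookkeeping amounts to the same thing). Where you diverge is injectivity. The paper constructs an explicit candidate inverse $\psi(C,1)=\pt(C,1)\otimes 1$ with $\pt(C,1)=\prod_{t\in C'}e_t\prod_{f\in C''}(1-e_f)$, built from a finite set of representatives for the equivalence relation $g\sim t \iff \ld(e_g)=\ld(e_t)$, and then spends most of Section~\ref{section.lemma7.7} proving that $\psi$ is right $\kd$-linear and that $\psi\circ\varphi=\mathrm{id}$. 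You bypass all of that with the observation that the right $\kpg$-action of $B\subseteq\kpg$ on the module $B$ is plain multiplication (the paper records the same fact as the remark $f^{e}=fe$, but uses it only locally), so that $b\otimes a=1\otimes\ld(b)a$; combined with $1\otimes(C,t)=e_{t\m}\otimes(C,1)=1\otimes\ld(e_{t\m})(C,1)=1\otimes(C,1)$ this bounds $\dim_\K(B\otimes_{\kpg}\kd)$ by $n=\dim_\K\ld(B)$ (Corollary~\ref{dimension.Bdelta}), and surjectivity of $\varphi$ forces it to be an isomorphism. There is no circularity, since Corollary~\ref{dimension.Bdelta} depends only on Lemma~\ref{Bdelta}. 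What the paper's longer argument buys is an explicit formula for $\varphi^{-1}$ in terms of the idempotents $\pt(C,1)\in B$; what yours buys is economy — the entire apparatus of the representatives $X$, the partition $C=C'\cup C''$, and the linearity of $\psi$ becomes unnecessary, and your inverse $\ld(b)\mapsto 1\otimes\ld(b)$ is well defined for free.
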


Let $^*$ be the involution of $\kpg$ determined by $[g]^* = [g^{-1}]$.  Thus $e_g^{*} = ([g][g^{-1}])^{*} = [g^{-1}]^* [g]^{*} = [g] [g^{-1}] = e_g$ for every $g \in G$.
Note that $\kd$ also has a natural involution determined by 
\[
\left( \sum_{\gamma \in \Delta} \alpha_{\gamma} \gamma \right) \mapsto  \sum_{\gamma \in \Delta} \alpha_{\gamma} \gamma^{-1}
\]  
which, by a slight abuse of notation, will also be denoted by $^*$. It is clear from the equality (\ref{lambdaH})  which defines $\ld$ that $\ld(a^*) = \ld(a)^*$
  for every $a \in \kpg .$

  \begin{prop} \label{trivial} 
  Let $G$ be an arbitrary group, $\Delta $ be a connected component of  $\Gamma (G)$ with a finite number of vertices and $W$ be 
  the elementary $\kpg$-$\K H $-bimodule associated to $\Delta$ by means of the homomorphism $\ld$. Then   $B \otimes_{\K_{par} G} W \simeq \K$ as  right $\K H$-modules, where $\K$ is the trivial $\K H$-module and $B$ is a right $\K_{par} G$-module via $b^{[g]} = [g^{-1}] b [g]$  for $b \in B, g \in G$.
  \end{prop}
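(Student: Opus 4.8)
The plan is to reduce everything to Theorem~\ref{isomorphism.Bdelta} by realizing $W$ as a principal right ideal of $\kd$ generated by an idempotent. With $g_1 = 1$ the chosen base vertex, set $e = (A,1) \in \ld(B) \subseteq \kd$, the identity morphism at $A$; by Lemma~\ref{Bdelta} this lies in $\ld(B)$, it is idempotent, and under the isomorphism $\eta$ of \eqref{isomorphism.kdelta.matrices} it corresponds to the matrix unit $E_{11}$. Since $\kd \cong M_n(\K H)$, the right ideal $\kd e$ corresponds to $M_n(\K H)E_{11}$, which is precisely the first column $M_{n,1}(\K H) = W$. I would check that this identification is one of $\kpg$-$\K H$-bimodules once $\K H$ is identified with $e\kd e = E_{11}\kd E_{11}$ via $h \mapsto E_{11}(h)$: the left $\kpg$-action on both sides factors through $\ld$, left matrix multiplication matches, and right multiplication by $E_{11}(h)$ on $\kd e$ matches the entrywise right $\K H$-action on $W$. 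Thus $W \simeq \kd e$ as $\kpg$-$\K H$-bimodules.

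First I would then chain the standard isomorphisms
$$ B \otimes_{\kpg} W \simeq B \otimes_{\kpg} \kd e \simeq (B \otimes_{\kpg}\kd)\otimes_{\kd}\kd e \simeq (B\otimes_{\kpg}\kd)\,e, $$
using associativity of the tensor product together with $X \otimes_R Re \simeq Xe$ for any right $R$-module $X$ and idempotent $e\in R$; these are isomorphisms of right $\K H \simeq e\kd e$-modules. By Theorem~\ref{isomorphism.Bdelta} we have $B \otimes_{\kpg}\kd \simeq \ld(B)$ as right $\kd$-modules, so the computation reduces to determining the right $\K H$-module $\ld(B)\,e = \ld(B)\ract e$, where $\ract$ is the conjugation action \eqref{right.kDelta.structure}.

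Next I would carry out this computation through $\eta$. By Lemma~\ref{Bdelta} and Corollary~\ref{dimension.Bdelta}, $\ld(B)$ has $\K$-basis $(g_iA,1)$, $i = 1,\dots,n$, and each $(g_iA,1)$ maps under $\eta$ to the diagonal matrix unit $E_{ii}$, its $H$-entry being trivial because $g = 1$. Applying $d \ract \gamma = \gamma^{-1}d\gamma$ with $d = E_{ii}$ and $\gamma = E_{11} = \eta(e)$ (which is self-inverse) gives $E_{ii}\ract E_{11} = E_{11}E_{ii}E_{11} = \delta_{i1}E_{11}$, so $\ld(B)\ract e = \K E_{11}$ is one-dimensional over $\K$, and in particular nonzero.

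Finally I would determine the right $\K H$-action on $\K E_{11}$, which is the main point. The action of $h \in H$ is conjugation by $E_{11}(h)$, and since the inverse morphism gives $\eta^{-1}(E_{11}(h))^{-1} \mapsto E_{11}(h^{-1})$, one computes $E_{11}\ract E_{11}(h) = E_{11}(h^{-1})\,E_{11}\,E_{11}(h) = E_{11}(h^{-1}h) = E_{11}$. Hence every $h \in H$ acts as the identity, so $\ld(B)\,e \simeq \K$ is the trivial right $\K H$-module, and unwinding the chain of isomorphisms yields $B \otimes_{\kpg} W \simeq \K$, as asserted. The delicate point throughout is that the relevant right $\kd$-action \eqref{right.kDelta.structure} is by conjugation rather than plain right multiplication; it is exactly this conjugation that cancels the $H$-entry ($h^{-1}h = 1$) and forces the answer to be the \emph{trivial} module $\K$ rather than a copy of $\K H$. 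Keeping track of the conjugation (as opposed to ordinary) module structure is where I expect the proof to require the most care.
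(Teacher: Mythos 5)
Your argument is correct, and it takes a genuinely different route from the paper for the key part of the proof. Both arguments lean on Theorem~\ref{isomorphism.Bdelta} and the matrix picture $\eta:\kd\simeq M_n(\K H)$, but they exploit it differently. The paper decomposes $\kd\simeq W^n$ as a right $\K H$-module, uses the theorem plus Corollary~\ref{dimension.Bdelta} only to get the dimension count $\dim_\K(B\otimes_{\kpg}W)=1$, and then establishes triviality of the $H$-action by a separate hands-on computation inside $B\otimes_{\kpg}W$ (the two Claims: $1\otimes e_ih=1\otimes e_i$ via the idempotents $e_{g_ih^{-1}g_i^{-1}}$, and that $1\otimes e_i$ spans). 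You instead take the Peirce-corner view $W\simeq\kd e$ with $e=(A,1)\leftrightarrow E_{11}$, which transports the entire right $\K H$-module structure of $B\otimes_{\kpg}W$ onto $\ld(B)\ract e$ via the theorem; both the one-dimensionality and the triviality of the action then fall out of the single groupoid computation $(A,h)^{-1}(A,1)(A,h)=(A,1)$. This buys a shorter and more conceptual second half at the cost of having to verify carefully the bimodule identification $W\simeq\kd e$ and that the right $\kd$-structure matching under $\varphi$ is the conjugation action $\ract$ — which you do flag, and which is indeed where the care is needed. Two small points: $\kd e$ is a \emph{left} ideal of $\kd$ (a right $e\kd e$-module), not a right ideal; and it is worth noting explicitly that every $h\in H$ satisfies $h^{-1}\in A$ (since $hA=A$ and $1\in A$), so that $(A,h)$ really is a morphism of $\Delta$ and $e\kd e\simeq\K H$.
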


\begin{proof}
The algebra isomorphism $\kd \simeq M_{n} (\K H )$ endows $\kd$ with the structure of a right $\K H$-module.  
Consequently, as a right $\K H$-module, $\kd$ is isomorphic to the direct sum of $n$ copies of the free right $\K H$-module 
   $W,$ and we have   that 
\[
 B \otimes_{\kparg} \kd \simeq
B \otimes_{\kparg} W^n
\simeq 
(B \otimes_{\kparg} W)^n 
\]
 as right $\K H$-modules.  Then it follows from  Corollary \ref{dimension.Bdelta} and Theorem \ref{isomorphism.Bdelta}   that 
 \[ n = \dim_{ \K} (\lambda_\Delta(B)) = \dim _{\K}(B \otimes_{\kparg} \kd 
)  =  \dim_{ \K} ((B \otimes_{\kparg} W)^n),
 \]
  and therefore 
 $\dim_{ \K} (B \otimes_{\kparg} W) = 1$.  It remains to show that the right $\K H$ action on this module is trivial. 

Let $\{e_i ; i=1,2, \ldots, n\}$ be the canonical $\K H$-basis of
$W \simeq M_{n,1} (\K H )$.

\begin{claim}\label{1-tensor-ei}
$1 \otimes e_i h =  1 \otimes e_i$ in  $B \otimes_{\kparg} W$ for any  $h\in H$ and any  $ i=1,2, \ldots, n$. 
\end{claim}  In order to see this, fix $h\in H$ and recall that  $ {\mathcal V}_{\Delta } = \{g_1 A, g_2A, \ldots, g_nA \}$ where $g_1=1.$
 Notice that according to  \eqref{lambdaH} 
the element 
 $(g_iA,g_i hg_i^{-1})\in \Delta$ is a summand of 
$\ld(g_ihg\m_i)$ for each $i = 1, 2, \ldots, n$,  as $A\ni g\m_i$ implies $g_iA\ni g_ih^{-1}g\m_i.$ By \eqref{isomorphism.kdelta.matrices}, 
 $\eta (g_iA,g_i h g_i^{-1})= E_{ii}(h)$
 and $h$ is the $(i,i)$-entry of the matrix $\eta (\ld(g_ihg\m_i)).$   Recalling that $\eta (\ld(g_ihg\m_i))$ is monomial over $H$ 
and that the left ${\kparg}$-module structure on $W$ is given via the composition $\eta \circ \ld ,$ we obtain that 
$$
1 \otimes e_i h =  1 \otimes \eta (\ld(g_ihg\m_i)) e_i =   1^{[g_ihg\m_i]} \otimes e_i  $$ 
$$
  = [g_ih^{-1}g\m_i][g_ihg\m_i] \otimes e_i = 	e_{g_ih^{-1}g\m_i} \otimes e_i,
$$
recalling that if $b \in B$ and $r \in G$ then  $b^{[r]}=  [r^{-1}] b [r] \in B $ (see Proposition \ref{trivial}). 
  On the other hand 
 $$   
1 \otimes \eta (\ld(e_{g_ih^{-1}g\m_i})) e_i= 1 \otimes \eta (\ld([g_ih^{-1}g\m_i] [g_ihg\m_i])) e_i = 1^{[g_ih^{-1}g\m_i] [g_ihg\m_i]} \otimes e_i $$ $$= [g_ih^{-1}g\m_i] [g_ihg\m_i] [g_ih^{-1}g\m_i] [g_ihg\m_i] \otimes e_i = e_{g_ih^{-1}g\m_i} \otimes e_i.$$  Since by equality \eqref{formula.lambda-delta(e_g)} the element  $(g_iA,1)\in \Delta$ is a summand of
 $\ld(e_{g_ih^{-1} g\m_i}),$ the $(i,i)$-entry of the diagonal matrix    $\eta(\ld(e_{g_ih^{-1} g\m_i}))$ is $1,$ and thus 
$$1 \otimes e_i h =   1 \otimes  \eta (\ld(e_{g_ih^{-1} g\m_i}))   e_i  = 1\otimes e_i,$$ as claimed.

\begin{claim}
 $\{1 \otimes e_i \}$ is a $\K$-basis of $B \otimes_{\kparg} W$ for any  fixed $ i=1,2, \ldots, n$. 
\end{claim}

Indeed,  note that the elements $1 \otimes e_1, \ldots , 1 \otimes e_n$ generate $B \otimes_{\kparg} W$ as  a right $\K H$-module, since $b \otimes w =1 \otimes  \eta(\ld(b)) w$ for any $b \otimes w  \in B \otimes_{\kpg} W$.  It follows from Claim~\ref{1-tensor-ei}
that  $1 \otimes e_1, \ldots , 1 \otimes e_n$ generate $B \otimes_{\kparg} W$ as a $ \K$-vector space.

 Proposition 2.2 of \cite{DdLP} implies that $\ld : \kpg \to \kd$ is an epimorphism.  Hence, given $1 \leq i,j \leq n$,  we may write $E_{j,i}(1) =  \eta (\lambda_{\Delta }(a)) $ for some $a \in \kpg$, and it follows that
\begin{align*}
1 \otimes e_j & = 1 \otimes E_{j,i}(1) e_i = 1 \otimes \eta(\ld(a)) e_i =  1^a  \otimes e_i.
\end{align*} 
 For any $b \in B$, the element $\ld(b) \in \kd$ is a $\K$-linear combination of idempotents of the form $(C,1)$, $C \in \mathcal{V}_\Delta$, and therefore $\eta(\ld (b))$ is a diagonal matrix  with entries in $\K $ thanks to equality  \eqref{isomorphism.kdelta.matrices}. 
 In particular   $\eta(\ld (1^a))$ is  such a diagonal matrix. Therefore
\[ 1 \otimes e_j =   1^a \otimes e_i = 1 \otimes \eta(\ld (1^a)) e_i  =   \alpha_{j,i} (1 \otimes e_i)
\]
for some $\alpha_{j,i} \in \K$. 
Hence all elements $1 \otimes e_i$ are scalar multiples of each other and, given that those elements generate $B \otimes_{\kpg} W$ as a $\K$-vector space, at least one of the scalars $\alpha_{j,i}$ is nonzero; but this implies that all are nonzero. Due to the fact that  $\dim_{\K} B \otimes_{\kparg} W = 1$, any set $\{1 \otimes e_i \}$ is a $\K$-basis, proving the claim. 

 The fact that the right   $\K H$-action on $B \otimes_{\kpg} W$ is trivial
follows from Claim~\ref{1-tensor-ei}.
 \end{proof}

 The proof of the following result will be given in Section \ref{projective-section}.
 
 \begin{theorem} 
 	Let $G$ be an arbitrary group, $\Delta $ be a connected component of  $\Gamma (G)$ with a finite number of vertices and $A$ is finite.  Then $\K \Delta$ is projective as a left $\kpg$-module (via $\lambda_{\Delta}$). 
 	\end{theorem}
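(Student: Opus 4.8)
The plan is to reduce the statement to the projectivity of the single elementary bimodule $W$ and then to realise $W$ as a direct summand of the left regular module $\kpg$ by lifting the relevant matrix idempotent into $B$. For the reduction I would use the isomorphism $\eta:\K\Delta\xrightarrow{\sim}M_n(\K H)$: under it the left $\kpg$-action (via $\lambda_\Delta$) is left matrix multiplication, so $\K\Delta$ splits as the direct sum of its $n$ columns, each isomorphic as a left $\kpg$-module to $W=M_{n,1}(\K H)$. Thus $\K\Delta\cong W^{\,n}$ and it suffices to prove that $W$ is projective. I would also record a useful consequence of the hypothesis: since $g_jh\in A^{-1}$ for every $j$ and every $h\in H$ (because $g_j^{-1}\in A$ and $h^{-1}A=A$ give $h^{-1}g_j^{-1}\in A$), one gets $n\,|H|\le |A|$, so the finiteness of $A$ forces $H$ to be finite and $W$, $\K\Delta$ to be finite dimensional over $\K$.

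Next I would use finiteness to manufacture an idempotent of $B$ lifting the matrix unit $E_{11}(1)=(A,1)$. Let $C_1,\dots,C_m$ be the finitely many vertices of $\Delta$ with $C_j\supsetneq A$, and choose $x_j\in C_j\setminus A$; since $A$ is finite I may form
$$f=\prod_{a\in A}e_a\;\prod_{j=1}^m(1-e_{x_j}),$$
which is an idempotent of $B$ because the $e_g$ are commuting idempotents. Using \eqref{soma}, that is $\lambda_\Delta(\prod_{t\in X}e_t)=\sum_{C\supseteq X}(C,1)$, together with the choice of the $x_j$, the only vertex surviving the product is $A$ itself, so $\lambda_\Delta(f)=(A,1)=E_{11}(1)$. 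The finiteness of $A$ is essential here, both to make $\prod_{a\in A}e_a$ meaningful and to guarantee that finitely many factors $1-e_{x_j}$ separate $A$ from all larger vertices.

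With $f$ in hand I would study the left $\kpg$-linear map $\theta:\kpg f\to W$, $xf\mapsto\lambda_\Delta(x)\,E_{11}(1)$. It is surjective because $\lambda_\Delta$ is an epimorphism and $\K\Delta\,E_{11}(1)=W$, and $\kpg f$ is projective, being a direct summand of $\kpg=\kpg f\oplus\kpg(1-f)$. Hence the entire claim reduces to showing that $\theta$ is an isomorphism, equivalently that $\ker\lambda_\Delta\cap\kpg f=0$; once this is done, $W\cong\kpg f$ is projective, so $\K\Delta\cong W^{\,n}$ is projective, and Lemma~\ref{idempotents} (applied to the complementary idempotent) repackages this as the projectivity of $\K\Delta=\kpg/\ker\lambda_\Delta$.

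The injectivity of $\theta$ is exactly where I expect the main difficulty to lie, and it is the only place where the finiteness of the vertices can be cashed in decisively. Writing an element of $\kpg f$ in the graded form $\sum_g b_g\,\tau_g(f e_{g^{-1}})\#g$ coming from $\kpg\cong B\rtimes_\tau G$, the nonzero homogeneous components are governed by the idempotents $\tau_g(f e_{g^{-1}})$, and a priori these can be nonzero for $g$ well outside the finite set $A^{-1}$ of group elements ``visible'' to $\Delta$. The crux is therefore to analyse $\tau_g(f e_{g^{-1}})$ for $g\notin A^{-1}$ and to show that the corresponding tail of $\kpg f$ is annihilated, so that $\kpg f$ collapses onto the finite-dimensional module $W$; this is the step where I would expect the combinatorics of the finite vertices of $\Delta$, and the matrix description \eqref{isomorphism.kdelta.matrices} of $\eta\circ\lambda_\Delta$, to be used in full.
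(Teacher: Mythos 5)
Your reduction $\K\Delta\cong W^{\,n}$ as left $\kpg$-modules, the observation that $H$ is finite, the computation $\lambda_\Delta(f)=(A,1)$, and the surjectivity of $\theta$ are all fine. The problem is the step you explicitly defer --- the injectivity of $\theta$, i.e.\ $\ker\lambda_\Delta\cap\kpg f=0$. That step is the entire content of the theorem, and for the idempotent $f$ you construct it is in fact \emph{false}. Every vertex of $\Delta$ is a translate $g_iA$ of $A$ and therefore has the same cardinality as $A$, so no vertex properly contains $A$; hence $m=0$ and $f=\prod_{a\in A}e_a$. Take $A=\{1\}$, so that $\Delta$ has the single vertex $\{1\}$, $H=1$, $n=1$ and $W=\K$, with $\lambda_\Delta(e_g)=\lambda_\Delta([g])=0$ for every $g\neq 1$. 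Then $f=e_1=1$, so $\kpg f=\kpg$, whereas $W$ is one-dimensional, and every $e_g$ with $g\neq 1$ lies in $\ker\theta$. So $\theta$ is very far from injective already for $G=\Z/2$, where the conclusion of the theorem certainly holds; no analysis of the elements $\tau_g(fe_{g^{-1}})$ can repair the specific map you wrote down. The natural strengthening --- throwing in the factors $(1-e_s)$ for \emph{all} $s\in C_0\setminus A$, where $C_0=\bigcup_{C\in\V_\Delta}C$ --- still gives $f=1$ in this example, so the strategy of realizing $W$ as $\kpg f$ for an idempotent $f\in B$ built from the vertices of $\Delta$ cannot be carried through as stated. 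The singleton component is a stress test you should run on any candidate argument here.

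The paper takes a genuinely different route and never isolates $W$. It sets $\widehat{P}_C=\prod_{r\in C}e_r\prod_{s\in C_0\setminus C}(1-e_s)$ for each vertex $C$ and defines a single $\K$-linear map $\zeta_\Delta:\K\Delta\to\kpg$, $(C,g)\mapsto[g]\widehat{P}_C$; it then proves $\lambda_\Delta\circ\zeta_\Delta=\mathrm{id}_{\K\Delta}$, that $\zeta_\Delta$ is multiplicative, and finally (using multiplicativity and the fact that $\K\Delta$ is generated by the elements $\lambda_\Delta(g)$) that $\zeta_\Delta$ is a homomorphism of left $\kpg$-modules. This exhibits $\K\Delta$ as a direct summand of the free module $\kpg$ itself, sidestepping the question of which cyclic module $\kpg f$ a single column $W$ is isomorphic to. The delicate point in that argument --- where the finiteness of the vertices and the full product over $C_0\setminus C$ are actually consumed --- is precisely the $\kpg$-linearity of $\zeta_\Delta$, which is the analogue of the step you left open; so even measured against the paper's own proof, what you have omitted is the part carrying all the difficulty, not a routine verification.
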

 
 Furthermore in Section \ref{projective-section} we will show that when $A$ is infinite $\K \Delta$ is not necessarily projective as a left $\kpg$-module (via $\lambda_{\Delta}$).
In Section \ref{flat-section} we will prove a more general result.

 \begin{theorem} \label{flat-left} Let $G$ be an arbitrary group, $\Delta $ be a connected component of  $\Gamma (G)$ with a finite number of vertices. Then $\K \Delta$ is flat as  a left $\kpg$-module (via $\lambda_{\Delta}$). 
\end{theorem}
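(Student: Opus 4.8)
The plan is to realize $\kd$ as a direct limit of projective left $\kpg$-modules and then to invoke the fact that a direct limit of flat modules is flat \cite{Rotman}. Recall (from Proposition~2.2 of \cite{DdLP}) that $\ld\colon \kpg\to\kd$ is surjective, so that $\kd\cong\kpg/\Ker\ld$ is a cyclic left $\kpg$-module, and that $\ld(f)=1_{\kd}$ forces $\kpg f\to\kd$ to be onto for any idempotent $f$ with $\ld(f)=1_{\kd}$. I will produce a directed family of idempotents $f_S\in B$ with $\ld(f_S)=1_{\kd}$, observe that each $\kpg f_S$ is projective, and show that the resulting surjection $\varinjlim_S\kpg f_S\to\kd$ is an isomorphism.

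First I would fix the vertices $\VD=\{C_1,\dots,C_n\}$ and, for each finite subset $S\subseteq G$ with $1\in S$, set
$$
f_S \;=\; \sum_{Y}\ \prod_{g\in Y}e_g\prod_{g\in S\setminus Y}(1-e_g),
$$
where $Y$ runs over the distinct \emph{traces} $C\cap S$ with $C\in\VD$. The summands are pairwise orthogonal idempotents of $B$, so $f_S$ is an idempotent and, by Lemma~\ref{idempotents}, $\kpg f_S$ is projective. Ordering finite subsets by inclusion gives a directed set; one checks $f_{S'}f_S=f_{S'}$ whenever $S\subseteq S'$, so right multiplication by $f_{S'}$ gives left $\kpg$-linear connecting maps $\kpg f_S\to\kpg f_{S'}$ forming a directed system. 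Using $\ld(e_g)=\sum_{C\in\VD,\,g\in C}(C,1)$ from the proof of Lemma~\ref{Bdelta} together with $\ld(1)=\sum_{C\in\VD}(C,1)=1_{\kd}$, one verifies that $\ld(f_S)=\sum_{C\in\VD}(C,1)=1_{\kd}$ for every $S$ (each vertex contributes to the unique summand indexed by its own trace, and the only vertices are the $n$ given ones). Hence the maps $\ld\colon\kpg f_S\to\kd$, $y\mapsto\ld(y)$, are surjective and compatible with the connecting maps, inducing a surjection $\theta\colon\varinjlim_S\kpg f_S\to\kd$.

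It remains to prove that $\theta$ is injective, which reduces (after a finite-support argument) to the inclusion $\Ker\ld\subseteq\{x\in\kpg : xf_T=0\text{ for some finite }T\}$; this is the heart of the proof. Here I would use that $\ld$ respects the $G$-gradings of $\kpg=\oplus_g D_g\# g$ and of $\kd=\oplus_g\,\mathrm{span}\{(C,g)\}$, so it suffices to treat a homogeneous element $x=d\# g$ with $d\in D_g=e_gB$. A direct computation in $M_n(\K H)$ shows that $\ld(d\# g)=0$ is equivalent to $\chi_C(d)=0$ for every vertex $C\in\VD$ (where $\chi_C$ is evaluation of the idempotents at $C$), i.e. to $d\in\Ker(\ld|_B)$, using that $d\in D_g$ already forces $\chi_C(d)=0$ for $C\not\ni g$. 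For such $d$, taking $T$ to contain every $g'$ with $e_{g'}$ occurring in $d$ gives $df_T=0$, since each summand of $f_T$ fixes the values $\chi_C$ of all those idempotents while $d$ vanishes at every vertex; transporting this through the identity $(d\# g)f_T=d\,\tau_g(f_Te_{g^{-1}})\# g$ and enlarging $T$ suitably yields $(d\# g)f_T=0$. The main obstacle is exactly this last step: controlling the $\tau_g$-shifted idempotent $\tau_g(f_Te_{g^{-1}})$ and confirming that a single finite $T$ annihilates $d\# g$ on the right; this requires careful bookkeeping with traces and the partial action $\tau$, but no further structural input. Once the reverse inclusion is established, $\theta$ is an isomorphism, so $\kd\cong\varinjlim_S\kpg f_S$ is a direct limit of projective left $\kpg$-modules and is therefore flat.
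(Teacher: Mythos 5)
Your proof is correct, but it takes a genuinely different route from the paper's. The paper proceeds by homological bootstrapping: it first shows $\Ker\ld=\kpg(\Ker\ld\cap B)$, which reduces flatness of $\Ker\ld$ over $\kpg$ to flatness of $\Ker\ld\cap B$ over the commutative algebra $B$; it then proves $\ld(B)$ is $B$-flat by splitting it into one-dimensional characters and analyzing $\K$ over idempotent-generated subalgebras (Lemma~\ref{lemma-flat}), and finally climbs back up through two long exact $Tor$ sequences. You instead exhibit $\kd$ directly as a filtered colimit of the cyclic projective modules $\kpg f_S$, where $f_S$ sums the orthogonal ``trace-pattern'' idempotents over a finite window $S$ --- in effect a Lazard-style presentation, and a natural finite truncation of the paper's own idempotents $\widehat{P}_A=\prod_{r\in A}e_r\prod_{s\in C_0\setminus A}(1-e_s)$ from Section~\ref{projective-section}, which exist globally only when the vertices are finite sets. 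The crux of your argument, that every element of $\Ker\ld$ is killed on the right by some $f_T$, does close as you predict: after the grading reduction to $d\#g$ with $d\in e_gB$ and $\ld(d)=0$, one writes $[g]f_T=\bigl(\sum_Y Q_Y\bigr)[g]$ with $Q_Y=\prod_{h\in gY}e_h\prod_{h\in gT\setminus gY}(1-e_h)$, and choosing $T\supseteq\{1,g^{-1}\}\cup g^{-1}T_0$ (with $T_0$ the finite set of group elements whose idempotents occur in $d$) makes every $dQ_Y$ vanish --- either $Q_Y$ contains the factor $1-e_1=0$ (when $g^{-1}\notin Y$), or $dQ_Y=\chi_{gC}(d)Q_Y=0$ since $gC$ is then a vertex of $\Delta$. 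Your route is shorter and more constructive; the paper's longer route has the advantage of establishing intermediate facts (the description of $\Ker\ld$ and the $B$-flatness of $\ld(B)$) that it reuses elsewhere, e.g.\ to show $\K G$ is not finitely presented as a $\kpg$-module.
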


\begin{theorem} \label{general1} Let $G$ be an arbitrary group, $\Delta $ be a connected component of  $\Gamma (G)$ with a finite number of vertices and $W$ be the elementary $\kpg$-$\K H$-bimodule related to $\Delta$. Let $U$ be an arbitrary left $\K H$-module. Then
		$$
	H_i^{par}(G,  W \otimes_{\K H} U) \simeq H_i(H, U), \;\;\;  i \geq 0.
	$$
\end{theorem}

\begin{proof} By the isomorphism  $ \kd \simeq M_n(\K H)$ given in \eqref{isomorphism.kdelta.matrices}, 
	$W$ is a direct summand of $ \kd$ as a $\kd$-module. Since $\kd$ is   flat  as a left $\K_{par} G$-module, we deduce that
	$$
	W \hbox{ is   flat  as a  left} \K_{par} G\hbox{-module}.
	$$
	Thanks to  Proposition \ref{trivial},   $B \otimes_{\K_{par} G} W$ is the trivial right $\K H$-module $\K$, and, because $W$ is also flat as a right $\K H$-module (being free), our statement follows by Theorem \ref{HomolShapiro}. 
\end{proof}

\begin{cor} Let   $\Delta$  be a connected component of the groupoid $\Gamma(G)$ such that $\Delta$ has finitely many vertices and   $V = W \otimes_{\K H} U$ be an irreducible (resp. indecomposable) left $\K_{par} G$-module with decomposition given by Theorem \ref{decomposition}. Then
	$$
	H_i^{par}(G, V) \simeq H_i(H, U), \;\;\;  i \geq 0.
	$$
	\end{cor}

 \begin{theorem} \label{general2} Let $G$ be an arbitrary group, $\Delta $ be a connected component of  $\Gamma (G)$ with a finite number of vertices and $W$ be the elementary $\kpg$-$\K H$-bimodule related to $\Delta$. Let $U$ be an arbitrary left $\K H$-module.  Then 
	$$
	H^i_{par}(G, W \otimes_{\K H}U ) \simeq H^i(H, U),\;\;\;  i \geq 0.
	$$
\end{theorem}

\begin{proof} 
We write $U_0$ for $U$ considered  as a right $\K H$-module by $u h = h^{-1} u$. Using
Theorem \ref{CohomolShapiro}, Proposition \ref{trivial} and  Theorem \ref{flat-left}, we obtain  $\K$-isomorphisms
$$
	H^i_{par}(G, Hom_{\K H} (W, U_0)) 	\simeq 	H^i(H, U_0) \simeq H^i(H, U), 
	$$
 in which  the latter isomorphism comes from the fact that $H^i(H, U_0) = Ext^i_{ \K H}(\K, U_0)$, where $\K$ is considered as a trivial right $\K H$-module, and $H^i(H, U) = Ext^i_{ \K H}(\K, U)$, with $\K$ considered as a trivial left $\K H$-module. Note that by definition $$
	H^i_{par}(G, Hom_{\K H} (W, U_0)) = Ext^i_{\K_{par} G} (B, Hom_{\K H} (W, U_0)),
	$$ where both $B$ and $Hom_{\K H} (W, U_0)$ are right $\K_{par} G$-modules.
	
	On other hand 
	$H^i_{par}(G,  V) = Ext^i_{\K_{par} G} (B,  V)$, where both $B$ and $V = W \otimes_{\K H} U$ are left $\K_{par} G$-modules. Write $V_0$ for $V$ considered as  right $\K_{par} G$-module by $v [g] = [g^{-1}] v$.  Note that $ Ext^i_{\K_{par} G} (B,  V) \simeq  Ext^i_{\K_{par} G} (B,  V_0)$, where $B$ in $Ext^i_{\K_{par} G} (B,  V_0)$ is considered as a right $\kpg$-module. Then it remains to show that \begin{equation} \label{iso-action} V_0 \simeq  Hom_{\K H} (W, U_0) \hbox{ as right } \K_{par} G\hbox{-modules}.\end{equation}

	Note that 
	\[
	V = W \otimes_{\K H} U \simeq (\K H)^{n} \otimes_{\K H} U \simeq (\K H \otimes_{\K H} U)^{n} \simeq U^{n}, 
	\]
	as $\K$-vector spaces, and composing these isomorphisms we obtain the $\K$-isomorphism 
		$$\varphi_1 : V_0 \to U^n$$
	given by
	$$
	\varphi_1 ((\lambda_1, \ldots, \lambda_n ) \otimes u )= ( \lambda_1 u, \ldots, \lambda_n u),
	$$
	where $\lambda_1, \ldots, \lambda_n \in \K H$ and $u \in U$.

	Consider also the standard isomorphism 
		$$\varphi_2 : Hom_{\K H} (W, U_0) \to U^n$$  defined by
	$$
	\varphi_2(f) = (f(e_1), \ldots, f(e_n)),
	$$
	where $\{e_1, \ldots, e_{n} \}$  is the canonical basis of the free right $\K H$-module $W \simeq (\K H)^{n} $.

	 Recall that the right action of $[g]$ on $f \in Hom_{\K H} (W, U_0)$ is determined by  $$(f \cdot  [g])(w) = f([g] \cdot w).$$ 
	 Then via the isomorphism $\varphi_2$  we can consider $U^n$ as a right $\K_{par} G$-module with $[g]$ acting in the following way :
		 if $\underline{u} = (u_1, \ldots, u_n) \in U^n$ then $ \underline{u} . [g] = \underline{\tilde{u}}$, where $\underline{\tilde{u}} = (\tilde{u}_1, \ldots, \tilde{u}_n) \in U^n$, $$M_g^*. \underline{u}^t = \underline{\tilde{u}}^t$$ 
		  and $M_{g},  M_g^*\in M_n(\K H) $ are the matrices defined in \eqref{elementaryRepr}.
		  Indeed to see that the action is precisely this one, we consider
		 $$f(e_i) = u_i, (f \cdot [g]) (e_i) = \tilde{u}_i \hbox{ and } M_g  = (\lambda_{i,j}) \in M_n(\K H).$$  
		 Then
		 $$
		 \tilde{u}_i = f([g] e_i) = f( \sum_{1 \leq j \leq n} e_j \lambda_{j,i} )  =  \sum_{1 \leq j \leq n} f(e_j) \lambda_{j,i} = \sum_{1 \leq j \leq n} u_j \lambda_{j,i} = \sum_{1 \leq j \leq n} \lambda_{j,i}^{ *} u_j .$$
	 
	  On the other hand,  for $\lambda \in (W \otimes_{\K H} U)_0 = V_0$ we have $\lambda \cdot [g] = [g^{-1}] \cdot \lambda$, so the action of $[g]$  on $U^n$    induced by $\varphi_1$ is  given as follows: if $\underline{u} = (u_1, \ldots, u_n) \in U^n$, then $ \underline{u} \cdot [g] = \underline{\tilde{u}}$, where 
	  $$M_{g^{-1}} \cdot \underline{u}^t = \underline{\tilde{u}}^t,$$ 
	  i.e., it is given by left multiplication  by the matrix $M_{g^{-1}}$, where the $n$-tuples are considered as columns.
	  
	  Finally,  $M_g^* = M_{g^{-1}}$ by Lemma \ref{action}, thus proving  
	 (\ref{iso-action}).
 
 \end{proof}

\begin{cor} \label{cohomology123}  Let   $\Delta$  be a connected component of the groupoid $\Gamma(G)$ such that $\Delta$ has finitely many vertices and  $V = W \otimes_{\K H} U$ be an irreducible (resp. indecomposable) left $\K_{par} G$-module with decomposition given by Theorem \ref{decomposition}. Then 
	$$
	H^i_{par}(G, V) \simeq H^i(H, U), \;\;\; i \geq 0.
	$$
\end{cor}

\begin{cor} {\bf (Corollary B)}
	Suppose $G$ is a finite group. Then
		$$
	H_i^{par} (G, B) \simeq \oplus_{\Delta} H_i(H, \K) \ \hbox{ and } \  H^i_{par} (G, B) \simeq \oplus_{\Delta} H^i(H, \K),
	\;\;\;  i \geq 0,
	$$
	where the sum is over the connected components $\Delta$ of the groupoid $\Gamma(G)$ and $H$ is the stabilizer of a chosen vertex of $\Delta$ (thus is defined up to conjugation).
\end{cor}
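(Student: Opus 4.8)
The plan is to deduce Corollary B from Theorems \ref{general1} and \ref{general2} by writing the left $\kpg$-module $B$ as a finite direct sum of trivial-coefficient elementary modules $W \otimes_{\K H} \K$, one for each connected component $\Delta$, and then invoking additivity of the derived functors in the appropriate variable.

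First I would set up a block decomposition of $B$. Since $G$ is finite, the objects of $\Gamma(G)$ --- the subsets of $G$ containing $1$ --- are finite in number, so $\Gamma(G)$ has finitely many connected components $\Delta$, each with finitely many vertices. By the structure theorem for finite groups \cite{DEP}, the maps $\ld$ assemble into an algebra isomorphism $\lambda : \kpg \to \K\Gamma(G) = \oplus_\Delta \kd$. Pulling back the block identity elements $\one$ gives orthogonal central idempotents $\{\varepsilon_\Delta\}$ of $\kpg$ summing to $1$, and hence a decomposition $B = \oplus_\Delta \varepsilon_\Delta B$ of $B$ as a left $\kpg$-module, with $\ld(\varepsilon_\Delta B) = \ld(B) \subseteq \kd$.

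The \textbf{main step}, and the one I expect to be the real obstacle, is to identify each block $\varepsilon_\Delta B$ with the trivial-coefficient elementary module $W \otimes_{\K H} \K$ as left $\kpg$-modules. The dimensions agree: $\dim_\K \varepsilon_\Delta B = \dim_\K \ld(B) = n$ by Corollary \ref{dimension.Bdelta}, while $\dim_\K (W \otimes_{\K H} \K) = \dim_\K\big((\K H)^n \otimes_{\K H}\K\big) = n$. For the module structure I would use the idempotent basis: by Lemma \ref{Bdelta}, $\ld(B)$ has basis $\{(C,1) : C \in \VD\}$, which pulls back to an idempotent basis $\{f_C\}$ of $\varepsilon_\Delta B$ indexed by the vertices $C$ of $\Delta$, with $\ld(f_C) = (C,1)$. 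A direct computation, using $M_{g^{-1}} = M_g^{\ast}$ (Lemma \ref{action}) and the monomial form \eqref{elementaryRepr} of $M_g$, shows that the conjugation action $[g]\cdot f_C = [g] f_C [g^{-1}]$ corresponds under $\ld$ to $M_g (C,1) M_{g^{-1}} = (gC,1)$ when $g^{-1}\in C$ and to $0$ otherwise; that is, $[g]$ acts on the basis $\{f_C\}$ by the partial permutation $C \mapsto gC$, defined exactly when $g^{-1}\in C$. On the other hand, $W \otimes_{\K H}\K \simeq \K^n$ carries the left $\kpg$-action through the matrix obtained from $M_g$ by replacing each $H$-entry by $1$, which by \eqref{isomorphism.kdelta.matrices} is precisely the same partial permutation of the $n$ vertices. (The reason the coefficient $\K H$-module is \emph{trivial} is that the stabilizer $H$ fixes the base vertex $A$, so the diagonal idempotent $(A,1)$ is fixed under conjugation by $M_h$, $h\in H$.) Matching the two bases then yields a $\kpg$-module isomorphism $\varepsilon_\Delta B \simeq W \otimes_{\K H}\K$.

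Finally I would conclude by additivity. As the set of components is finite, both $Tor_i^{\kpg}(B, -)$ and $Ext^i_{\kpg}(B, -)$ commute with the direct sum $B = \oplus_\Delta \varepsilon_\Delta B$, so
$$ H_i^{par}(G,B) = Tor_i^{\kpg}\Big(B, \oplus_\Delta \varepsilon_\Delta B\Big) \simeq \oplus_\Delta Tor_i^{\kpg}(B, W\otimes_{\K H}\K) \simeq \oplus_\Delta H_i(H,\K), $$
where the last isomorphism is Theorem \ref{general1} applied with $U = \K$ the trivial $\K H$-module; the cohomological statement follows identically from Theorem \ref{general2}. Here $H$ is the stabilizer of the chosen vertex of $\Delta$, and is well defined up to conjugation.
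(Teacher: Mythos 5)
Your proposal is correct and follows essentially the same route as the paper: decompose $B$ as a direct sum of left $\kpg$-modules indexed by the connected components $\Delta$, identify each summand with $W \otimes_{\K H}\K$ (the trivial-coefficient elementary module), and apply Theorems \ref{general1} and \ref{general2} with $U=\K$. The only cosmetic difference is that the paper builds the decomposition by hand via the explicit orthogonal idempotents $e_A=\prod_{g\in A}e_g\prod_{s\in G\setminus A}(1-e_s)$ rather than pulling back the block idempotents through the isomorphism $\kpg\simeq\K\Gamma(G)$, but these yield the same summands.
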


\begin{proof}
	We will split $B$ as a direct sum of indecomposable left $\K_{par} G$-modules.
	Write $$e_A = \prod_{g \in A} e_g \prod_{s \in G \setminus A} ( 1 - e_s) \in B,$$ where $A$ is a subset of $G$. The action of $[g]$ on $e_A$ gives $ [g] e_A [g^{-1}] = e_{g A}$ if $ 1 \in A$. Note that $e_A = 0$ if $1 \notin A,$  and thus $e_{gA}=0$ if $g\m \notin A.$
	Observe that  there is a $\K$-isomorphism
	$$
	B \simeq \oplus_{1 \in A} \K e_A.
	$$
	Indeed, let $G \setminus \{ 1 \} = \{ t_1, \ldots, t_k\}$. Then $$B = B e_{t_1} \oplus B ( 1 - e_{t_1}) = (B e_{t_2} \oplus B ( 1 - e_{t_2})) e_{t_1} \oplus (B e_{t_2} \oplus B ( 1 - e_{t_2})) ( 1 - e_{t_1}).$$ Continuing the decomposition of $B$ as $Be_{t_3} \oplus B ( 1 - e_{t_3})$ and so on, we obtain 
	$$
	B = \oplus_{1 \in A} B e_A.
	$$
	Note that $B = \K [e_{t_1}, \ldots, e_{t_k}]$, and  for $g \in A$ we have $e_g e_A = e_A$ and for $g \not\in A$ we have $e_g e_A = 0$. Thus $B e_A = \K  e_A$.

 Furthermore, 
for each subset $A$ with $1\in A$ the sum 
$$ \oplus _{g\in G, g^{-1}\in A} \K  e_{gA}$$ is a  left $\kpg$-module. Notice that in  the latter sum the $gA$ are the vertices of a connected component $\Delta $ of  the groupoid $ \Gamma(G).$
	
	Thus $B$ splits as a direct sum of left $\kpg$-modules  over the connected components $\Delta$ of $ \Gamma(G),$ where each summand is isomorphic to $\K \V_{\Delta}$. Then consider the epimorphisms
	$$\lambda_{\Delta} : \kpg \to \K \Delta \simeq M_n(\K H)$$
	and
	$$\theta:  M_n(\K H) \to M_n(\K),$$ 
	where $\theta$ is induced by the  epimorphism $\K H \to \K$ that sends each element of $H$ to 1. Then $\K \V_{\Delta} \simeq \K ^n = M_{n,1}(\K)$, where $M_{n,1}(\K)$ are all $n \times 1$-matrices over $\K$,  via the isomorphism that sends $e_{g_iA}$ to $e_i$. Thus $M_{n,1}(\K)$ is a left $M_n(\K)$-module via matrix multiplication,  and via the composition map $\theta \circ \lambda_{\Delta}$ it is a left $\kpg$-module. Thus 
	$$\K \V_{\Delta} \simeq W \otimes_{\K H} \K,$$
	where $W$ is the elementary bimodule related to the connected component $\Delta$.
	 Note that the above isomorphism is of left $\kpg$-modules  because $\theta(M_g) e_i = e_j$ for $g_j^{-1} g g_i = h_i \in H$ since $M_g=  \sum _{g_iA\ni g\m} E_{ji}(h_i)$.
	Then by Theorem \ref{general1} and Theorem \ref{general2}
	$$
	H^i_{par} (G, B) = \oplus_{\Delta} 	H_{par}^{i} (G, W \otimes_{\K H} \K )  \simeq \oplus_{\Delta} H^i(H, \K)$$ and $$   H_i^{par} (G, B) = \oplus_{\Delta} 	H_i^{par} (G, W \otimes_{\K H} \K )  \simeq \oplus_{\Delta} H_i(H, \K).
	$$
\end{proof}

\begin{theorem}
	Let $G$ be a finite group. Then for $i \geq 1$
	$$H^i_{par}(G, \kpg) = 0.$$
\end{theorem}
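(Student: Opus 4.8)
The plan is to reduce the statement to the single fact that $\kpg$ is a self-injective algebra when $G$ is finite. Since by definition $H^i_{par}(G, \kpg) = Ext^i_{\kpg}(B, \kpg)$, with $\kpg$ regarded as the left regular module, it suffices to show that $\kpg$ is injective as a left module over itself: for then $Ext^i_{\kpg}(M, \kpg) = 0$ for \emph{every} left $\kpg$-module $M$ and every $i \geq 1$, and in particular for $M = B$. Pleasantly, this route avoids any direct analysis of the coefficient module $B$, and it matches the range $i \geq 1$ exactly, since injectivity of the target says nothing about $Ext^0 = Hom$.

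First I would invoke the structure theorem of DEP. For a finite group $G$ there are only finitely many connected components $\Delta$ of $\Gamma(G)$, each with finitely many vertices, and the map $\oplus_\Delta \lambda_\Delta$ identifies $\kpg$ with $\oplus_\Delta \kd$ as $\K$-algebras, a finite direct sum of two-sided ideals (blocks). By the isomorphism $\eta$ of \eqref{isomorphism.kdelta.matrices}, each block satisfies $\kd \cong M_{n_\Delta}(\K H_\Delta)$, where $H_\Delta$ is the stabilizer of a chosen vertex of $\Delta$ and is therefore a finite subgroup of $G$.

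Next I would assemble the self-injectivity from the blocks. Each $H_\Delta$ being finite, the group algebra $\K H_\Delta$ is a Frobenius (indeed symmetric) algebra, hence self-injective. Self-injectivity is preserved under passage to matrix algebras (via Morita invariance) and under finite direct products, so each block $M_{n_\Delta}(\K H_\Delta) \cong \kd$ is self-injective, and therefore so is $\kpg = \oplus_\Delta \kd$. Consequently $\kpg$ is injective as a left $\kpg$-module, which yields $H^i_{par}(G, \kpg) = Ext^i_{\kpg}(B, \kpg) = 0$ for all $i \geq 1$.

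The only point requiring care is the self-injective (Frobenius) structure: one must justify that self-injectivity passes from $\K H_\Delta$ to the matrix algebra $M_{n_\Delta}(\K H_\Delta)$ and is inherited by the block decomposition, and that the DEP isomorphism $\kpg \cong \oplus_\Delta \kd$ genuinely holds for finite $G$ (so that the blocks are precisely the matrix algebras over the finite group algebras $\K H_\Delta$). I do not expect a genuine obstacle beyond correctly citing these finite-dimensional-algebra facts. As an alternative, more hands-on route, one could instead decompose $B = \oplus_\Delta \K\VD$ exactly as in Corollary B and compute $Ext^i_{\kd}(\K\VD, \kd)$ block by block through the Morita equivalence $\kd \simeq M_{n_\Delta}(\K H_\Delta) \sim \K H_\Delta$, reducing to $Ext^i_{\K H_\Delta}(\K, \K H_\Delta) = 0$; this last vanishing is again nothing but the injectivity of $\K H_\Delta$ over itself, so the two approaches ultimately rest on the same fact.
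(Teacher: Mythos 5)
Your proof is correct, and it takes a genuinely different route from the paper's. The paper also starts from the block decomposition $\kpg \simeq \K\Gamma(G) = \oplus_{\Delta}\K\Delta$ for finite $G$, but it then writes each block as $\kd \simeq W^{n} \simeq \oplus_i W_i \otimes_{\K H}\K H$ and invokes Theorem~\ref{general2} (the partial Shapiro machinery built in Sections~\ref{Shapiro-section}--\ref{section-bimodule}) to identify $H^i_{par}(G, W\otimes_{\K H}\K H)$ with $H^i(H,\K H)$, which vanishes for finite $H$ by a finite-index induction/coinduction argument from Brown. You instead bypass Theorem A entirely: since each block is $M_{n_\Delta}(\K H_\Delta)$ with $H_\Delta$ finite, and self-injectivity of $\K H_\Delta$ is Morita invariant and stable under finite products, $\kpg$ is injective as a left module over itself, so $Ext^i_{\kpg}(M,\kpg)=0$ for \emph{every} left module $M$ and $i\geq 1$. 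Both arguments ultimately rest on the same finite-dimensional fact --- the paper's $H^i(H,\K H)=Ext^i_{\K H}(\K,\K H)=0$ is exactly the injectivity of $\K H$ over itself tested on the trivial module --- but your version buys a strictly stronger conclusion (vanishing for arbitrary coefficients in place of $B$) at the cost of importing the Frobenius/Morita facts, whereas the paper's version reuses its own Theorem A and so fits the narrative of the article. The only points to cite carefully are exactly the ones you flag: the isomorphism $\kpg\simeq\oplus_\Delta\kd$ for finite $G$ (from \cite{DEP}) and the Morita invariance of self-injectivity; neither is a gap.
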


\begin{proof}
	Since $G$ is finite we have $$\kpg\simeq \K \Gamma(G) = \oplus_{\Delta} \K \Delta ,$$
	where the direct sum is over the connected components $\Delta$ of the groupoid $\Gamma(G)$. Thus $\K \Delta$ is a projective left $\kpg$-module and we  can decompose $\K \Delta \simeq W_1 \oplus \ldots \oplus W_n$, where $W_1 \simeq \ldots \simeq W_n \simeq W$ as $\kpg$-$\K H$-bimodules. Then 
 $$\K \Delta \simeq  \oplus_{ 1 \leq j \leq n} W_j \simeq \oplus_{1 \leq j \leq n} W_j \otimes_{\K H} U_j$$
	where each $U_j \simeq \K H$. Thus 
		$$	H^i_{par}(G, \kpg) \simeq \oplus_{\Delta} 	H^i_{par}(G, \K \Delta) \simeq \oplus_{\Delta}  \oplus_{1 \leq j \leq n= n(\Delta)} 	H^i_{par}(G, W_j \otimes_{\K H} \K H)$$ and by Theorem \ref{general2} for $i \geq 1$
	$$	H^i_{par}(G, W_j \otimes_{\K H} \K H) \simeq  H^i(H, \K H) = 0,$$
	where the last equality holds for any finite group $H$. Indeed  by  \cite[Prop. III.6.2]{Brown}  for any group $T$ with a subgroup of finite index $T_0$ there is an isomorphism $H^i(T, \K T) \simeq H^i(T_0, \K T_0),$ observing that for subgroups of finite index induced and co-induced modules are isomorphic   (see \cite[Prop. III.5.9]{Brown}).  Thus we obtain our result by  applying this fact for $T = H$ and  the trivial subgroup $T_0$.	
\end{proof}

\section{Proof of Theorem \ref{isomorphism.Bdelta}.} \label{section.lemma7.7}

 In this section whenever we make calculations in $\K \Delta$ all sums are over subsets of $G$ that are vertices in the connected component $\Delta$, that is, belong to $\V_{\Delta}$.

\subsection{ The morphisms $\varphi$ and $\psi$  connecting $B \otimes_{\kpg} \kd$ and $ \ld(B)$. }

\ \ 

We will introduce now an equivalence relation on $G$ which will be essential in what follows. 
Given $g \in G$, let 
\begin{equation*}
\ee_g  = \ld (g) \ld (g^{-1})  = \ld(e_g) \in \K \Delta.
\end{equation*}
We define an equivalence relation in $G$ by 
\begin{equation}
g \sim t \iff \ee_g = \ee_t .
\end{equation}
Fix a complete set of representatives $X \subset G$ for the equivalence relation defined above. 
We claim that $X$ is finite. In fact, $\Delta$ has a finite number of vertices and, by equality \eqref{formula.lambda-delta(e_g)}, for every $t \in G$ we have the formula
\begin{equation} \label{equation.epsilon}
\ee_t = \ld(e_t) =  \sum_{D \ni t}(D,1),
\end{equation}
which shows that the number of distinct $\ee_t$'s is finite.

\begin{rmk} \label{obs.1} 
	Note that  $\ee_t = \ee_g$ if and only if 	for all $C \in \V_\Delta$, 
	\begin{center}
		$C \ni g \iff  C \ni t$. 
	\end{center}
	In fact, by the definition of the equivalence relation, if $\ee_t = \ee_g$ then $\sum_{D \ni t} (D,1) = \sum_{C \ni g} (C,1)$ and vice versa. Clearly the second equality holds if and only, for every vertex $C$ of $\Delta$, $C \ni g$ implies that $C \ni t$ and conversely.
\end{rmk}	

Let $C$ be a vertex of $\Delta$. We define a partition  $X = C' \cup C''$ of the system of representatives in the following manner: 
\begin{eqnarray}
C' & = &  X \cap C \\
C''& = & X \setminus C'.
\end{eqnarray}
Note that if $x \in X$ is equivalent to some $t \in C$ then, by Remark  \ref{obs.1}, $x \in C$. 
Thus we obtain the following  description for $C'$:
\[
C' = \{ x \in X \mid  \exists t \in C \text{ such that }x \sim t \}. 
\]

\begin{rmk} \label{obs.2} 
	$C'$ determines $C$,  that is, 
	if $D, C \in \V_\Delta$ then $D'$ and $ C'$   coincide if and only if $C = D$. In fact, suppose that  $C'= D'$ and assume that there exists $d \in D \setminus C$. If $d \sim x \in X$ then $x \in D' = C' $ and hence $x \in C$. But  $d \sim x$ and $x \in C$ then, by the previous remark, $d$ also lies in $C$, a contradiction. 
\end{rmk}

Consider the $\K$-linear map
\[
\varphi: B  \otimes_{\K_{par}G} \K \Delta  \to \ld(B) , \ \ b \otimes a \mapsto \ld(b) \ract a,
\]
 where the right $\K\Delta$-action $\ract$ was defined in (\ref{right.kDelta.structure}).
This map is well-defined, since for $b \in B$, $a \in  \K \Delta$ and $[g] \in \kpg$,
\begin{align*}
\varphi ((b^{[g]}) \otimes a )& =  \varphi ( [g^{-1}] b [g] \otimes a ) \\
& = \ld (g^{-1} ) \ld( b)  \ld (g) \ract a \\
& = \ld (g)^{*} \ld( b)  \ld (g)  \ract a \\
& = (\ld( b)  \ract \ld (g ) ) \ract a \\
& = \ld( b)  \ract (\ld (g)   a) \\
& = \varphi (b \otimes \ld(g) a )\\
& = \varphi (b \otimes [g] \cdot  a ).
\end{align*}
Note that $\varphi$ is a map of right $\K \Delta$-modules: for $b \in B$ and $a_1, a_2 \in \kd$,  
\[
\varphi ( ( b \otimes a_1 ) a_2 ) = \varphi ( b \otimes a_1 a_2 ) = \ld(b) \ract (a_1 a_2) = (\ld(b) \ract a_1 ) \ract a_2 = \varphi (b \otimes a_1 ) \ract a_2. 
\]
We are going to prove  that $\varphi$ is an isomorphism of right $\K \Delta$-modules. We begin by defining a right inverse to $\varphi$, which will later be shown to be a left inverse as well. 
Given $(C,1) \in \ld(B)$, let 
\begin{equation}
\pt (C,1) = \prod_{t \in C'} e_t \prod_{f \in C''} (1 - e_f) 
\end{equation}
and extend $\pt$ linearly to $\ld(B)$. 
We now define
\begin{equation}\label{definition.psi}
\psi : \ld(B) \to B \otimes_{\kpg} \K \Delta, \ \ d \mapsto \pt(d) \otimes 1_{\K \Delta}.
\end{equation}
We  will show  that 
\begin{equation} \label{varphi.psi}
\varphi \circ \psi  = Id_{\ld(B)}.
\end{equation}
In fact, we first remark that for any finite  subset $Y$  of $G$ we have the equality 
\begin{equation} \label{formula.produto.e_t}
\prod_{t \in Y} \sum_{ \substack{D \in \V_\Delta \\ D \ni t}} (D,1) = \sum_{D \supseteq Y} (D,1), 
\end{equation}
since the elements $(D,1)$ are orthogonal idempotents in $\K \Delta$. Another simple fact needed in the  computation below is that 
if $C$ and $F$ are subsets of $G$ then 
\begin{center}
	$ F \cap C''= \varnothing$ if and only if  $F' \subseteq C'$.
\end{center}

\begin{lemma}\label{lemma.p17}
	$\ld \circ \pt = Id_{\ld(B)}$. 
\end{lemma}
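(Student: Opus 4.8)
The plan is to check the identity on the basis $\{(C,1) : C \in \V_\Delta\}$ of $\ld(B)$ provided by Lemma~\ref{Bdelta}; by linearity it suffices to prove $\ld(\pt(C,1)) = (C,1)$ for every vertex $C$.

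First I would apply $\ld$, which is a unital algebra homomorphism, to the defining product for $\pt(C,1)$. Since every vertex of $\Delta$ contains $1$, we have $\ld(1) = \sum_{D \in \V_\Delta}(D,1)$, so using \eqref{equation.epsilon} the factors become $\ld(e_t) = \ee_t = \sum_{D \ni t}(D,1)$ and $\ld(1-e_f) = \sum_{D \not\ni f}(D,1)$. This gives
$$\ld(\pt(C,1)) = \prod_{t \in C'}\Big(\sum_{D \ni t}(D,1)\Big)\prod_{f \in C''}\Big(\sum_{D \not\ni f}(D,1)\Big).$$
Because the $(D,1)$ are pairwise orthogonal idempotents --- the same fact underlying \eqref{formula.produto.e_t} --- expanding the product kills every cross term and retains only those $D$ satisfying all the constraints at once, so that
$$\ld(\pt(C,1)) = \sum_{\substack{D \in \V_\Delta,\ D \supseteq C' \\ D \cap C'' = \varnothing}}(D,1).$$

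The heart of the argument is then to show that this index set is exactly $\{C\}$. That $C$ itself qualifies is immediate from $C \supseteq X \cap C = C'$ and $C \cap C'' = C \cap (X \setminus C) = \varnothing$. Conversely, I would take any $D \in \V_\Delta$ with $D \supseteq C'$ and $D \cap C'' = \varnothing$: the set-theoretic equivalence stated just before the lemma (with $F = D$) turns $D \cap C'' = \varnothing$ into $D' \subseteq C'$, while $C' \subseteq X$ together with $C' \subseteq D$ gives $C' \subseteq X \cap D = D'$; hence $D' = C'$, and Remark~\ref{obs.2} forces $D = C$. Thus the sum reduces to $(C,1)$, and extending linearly yields $\ld \circ \pt = Id_{\ld(B)}$.

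I expect the only genuinely delicate point to be the bookkeeping in the middle display: one must confirm that the orthogonality of the idempotents $(D,1)$ makes the contributions of $C'$ and of $C''$ combine into the single conjunction $D \supseteq C' \wedge D \cap C'' = \varnothing$, with no surviving cross terms. Once that is in hand, the final identification of the index set is a clean formal consequence of Remark~\ref{obs.2} and the stated equivalence $F \cap C'' = \varnothing \iff F' \subseteq C'$.
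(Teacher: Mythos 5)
Your proof is correct and follows essentially the same route as the paper's: apply $\ld$ factor by factor, use orthogonality of the idempotents $(D,1)$ to reduce to the sum over $D \supseteq C'$ with $D \cap C'' = \varnothing$, and then identify that index set as $\{C\}$ via the equivalence $F \cap C'' = \varnothing \iff F' \subseteq C'$ and Remark~\ref{obs.2}. The only (harmless) difference is that you also explicitly verify that $C$ itself satisfies the two constraints, which the paper leaves implicit.
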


\begin{proof}
	\begin{align*}
	(\ld \circ \pt)(C,1) & =  \ld (\prod_{t \in C'} e_t \prod_{f \in C''} (1 - e_f)  ) \\
	& =  \prod_{t \in C'} \ld(e_t) \prod_{f \in C''} (1 - \ld(e_f) )\\
	& =  \prod_{t \in C'} \sum_{D \ni t} (D,1)  \prod_{f \in C''} (1 - \sum_{E \ni f} (E,1))\\
	& =  \prod_{t \in C'} \sum_{D \ni t} (D,1)  \prod_{f \in C''}  \sum_{F \notni f} (F,1) \\
	& =  \left( \sum_{D \supseteq C' } (D,1) \right)   \left( \sum_{ F \cap C'' = \varnothing  } (F,1) \right)\\
	& =    \sum_{ \substack{D \supseteq C' \\  D \cap C'' = \varnothing }} (D,1). 
	\end{align*}

	Now consider a subset $D$ such that $D \supseteq C' $ and $ D \cap C'' = \varnothing$.
	The inclusion  $D \supseteq C' $ implies  $D' \supseteq C' ,$ and  $D \cap C''= \varnothing$
	yields   $D' \subseteq C' .$ Hence  $D' = C'$ and consequently $D=C,$  thanks to Remark~\ref{obs.2}. 
	Therefore 
	\[
	(\ld \circ \pt)(C,1) =   \sum_{ \substack{D \supseteq C' \\  D \cap C'' = \varnothing }} (D,1) = (C,1).
	\] \end{proof}  

The proof of the above lemma immediately  implies the following result, which is precisely (\ref{varphi.psi}).

\begin{cor}\label{phipsiC,1}
	Given $(C,1)$ in $\K \Delta$,  we have that 
	$$\varphi \circ \psi (C,1)  = (C,1).$$
\end{cor}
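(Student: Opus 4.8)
The plan is to obtain the corollary as an immediate consequence of Lemma~\ref{lemma.p17}, the only extra ingredient being that the unit $1_{\K \Delta}$ acts trivially under the right action $\ract$. Since both $\varphi \circ \psi$ and $Id_{\ld(B)}$ are $\K$-linear and the elements $(C,1)$, $C \in \V_\Delta$, form a $\K$-basis of $\ld(B)$ by Lemma~\ref{Bdelta}, it suffices to verify the identity on each such basis element, which is exactly the assertion of the corollary.

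First I would unwind the definitions. By \eqref{definition.psi} we have $\psi(C,1) = \pt(C,1) \otimes 1_{\K \Delta}$, and applying $\varphi$ together with its defining formula gives
$$
\varphi \circ \psi (C,1) = \varphi\bigl(\pt(C,1) \otimes 1_{\K \Delta}\bigr) = \ld\bigl(\pt(C,1)\bigr) \ract 1_{\K \Delta}.
$$
Invoking Lemma~\ref{lemma.p17}, namely $\ld \circ \pt = Id_{\ld(B)}$, this reduces to $(C,1) \ract 1_{\K \Delta}$.

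It then remains to note that $1_{\K \Delta}$ acts as the identity for $\ract$. Writing the groupoid-algebra unit as the sum of the identity morphisms, $1_{\K \Delta} = \sum_{D \in \V_\Delta} (D,1)$, and recalling that each $(D,1)$ is an idempotent with $(D,1)^{-1} = (D,1)$, the definition \eqref{right.kDelta.structure} gives $(C,1) \ract (D,1) = (D,1)(C,1)(D,1)$, a product in $\K \Delta$ that equals $(C,1)$ when $D = C$ and vanishes otherwise. Summing over $D$ yields $(C,1) \ract 1_{\K \Delta} = (C,1)$, which finishes the proof. There is no real obstacle here: all the substance is already contained in the proof of Lemma~\ref{lemma.p17}, and the one point requiring a moment's attention is the identification of $1_{\K \Delta}$ with $\sum_{D} (D,1)$ and the verification that it acts as the unit for $\ract$, equivalently that \eqref{right.kDelta.structure} really is a unital right $\K \Delta$-module structure, as was already observed when that action was introduced.
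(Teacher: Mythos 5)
Your argument is correct and is exactly the route the paper takes: the paper derives the corollary as an immediate consequence of Lemma~\ref{lemma.p17}, and your only addition is the explicit (and accurate) check that $1_{\K\Delta}=\sum_{D\in\V_\Delta}(D,1)$ acts as the identity for $\ract$, which the paper leaves implicit. Nothing is missing.
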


\subsection{Linearity of $\psi$}

\ \

We begin with  two technical results which will be useful in the proof that $\psi$ is a $\K \Delta $-linear map, that is, $\psi$ is a homomorphism of right $\K \Delta$-modules.

\begin{rmk} For all $e,f$ in $B$, $f^{ e }= fe$. \\
	In fact, letting $e = e_g$  and using that $B$ is a commutative ring, we have:
	\[
	f ^{e_g} = f   ^{[g][g^{-1}]} = [g] ([g^{-1}] f [g] )[g^{-1}] = e_g f  e_g = f e_g.
	\]
\end{rmk}

\begin{lemma} \label{lemma.october}
	For all $t_1, t_2$ in $G$, if $t_1 \sim t_2$ then 
	
	\begin{enumerate}
		\item[(i)] $
		e_{t_1} \otimes 1_{\K \Delta} = e_{t_2} \otimes 1_{\K \Delta} = e_{t_1}e_{t_2} \otimes 1_{\K \Delta} \in B \otimes_{\kpg }\kd.
		$
		\item[(ii)] 
		$
		[g^{-1}] e_{t_1} [g] \otimes 1_{\K \Delta} =  [g^{-1}] e_{t_2} [g] \otimes 1_{\K \Delta} = [g^{-1}] e_{t_1}e_{t_2} [g] \otimes 1_{\K \Delta}, 
		$ for all $g \in G$. 		
	\end{enumerate}	
	
\end{lemma}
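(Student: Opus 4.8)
The plan is to isolate a single \emph{transport identity} in $B \otimes_{\kpg} \kd$ and then read off both parts from it. The identity is
$$
b \otimes 1_{\K \Delta} = 1 \otimes \ld(b) \qquad \text{for every } b \in B .
$$
To prove it I would combine the balancing relation of the tensor product with the identity $f^e = fe$ ($e,f \in B$) recorded in the Remark just above: taking $f = 1 \in B$ gives $1^{b} = b$ for the right $\kpg$-action on $B$, while the left $\kpg$-action on $\kd$ is $b \cdot a = \ld(b) a$ by \eqref{KparG-Delta}. Hence
$$
b \otimes 1_{\K \Delta} = 1^{b} \otimes 1_{\K \Delta} = 1 \otimes (b \cdot 1_{\K \Delta}) = 1 \otimes \ld(b) 1_{\K \Delta} = 1 \otimes \ld(b) ,
$$
the middle equality being exactly the balancing of the tensor product over $\kpg$. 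Everything else is a short computation sitting on top of this identity.

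For part (i) I would apply the transport identity with $b = e_{t_i}$ and invoke $\ld(e_{t_i}) = \ee_{t_i}$ from \eqref{equation.epsilon}, obtaining $e_{t_i} \otimes 1_{\K \Delta} = 1 \otimes \ee_{t_i}$ for $i = 1,2$. Since by definition $t_1 \sim t_2$ means precisely $\ee_{t_1} = \ee_{t_2}$, the first two expressions coincide. For the third, $\ld(e_{t_1} e_{t_2}) = \ee_{t_1} \ee_{t_2} = \ee_{t_1}^2 = \ee_{t_1}$, where the idempotency of $\ee_{t_1}$ follows from the fact that $\ee_{t_1} = \sum_{D \ni t_1} (D,1)$ is a sum of orthogonal idempotents of $\kd$; thus $e_{t_1} e_{t_2} \otimes 1_{\K \Delta} = 1 \otimes \ee_{t_1}$ as well.

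For part (ii) I would first observe that $[g^{-1}] e_{t_i} [g] = e_{t_i}^{[g]} \in B$, because the right $\kpg$-action preserves $B$; therefore the transport identity applies verbatim and, using that $\ld$ is an algebra homomorphism,
$$
[g^{-1}] e_{t_i} [g] \otimes 1_{\K \Delta} = 1 \otimes \ld([g^{-1}] e_{t_i} [g]) = 1 \otimes \ld(g^{-1})\, \ee_{t_i}\, \ld(g) .
$$
Once more $\ee_{t_1} = \ee_{t_2}$ forces the first two terms to agree, and $\ee_{t_1} \ee_{t_2} = \ee_{t_1}$ handles the term with $e_{t_1} e_{t_2}$ in place of $e_{t_i}$, completing part (ii).

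The calculations are routine; the only points that demand care are the bookkeeping of the two different module actions when the balancing relation is invoked (the right structure $b^{[g]} = [g^{-1}] b [g]$ on $B$ against the left structure $[g] \cdot a = \ld(g) a$ on $\kd$ induced by $\ld$), and the verification that $[g^{-1}] e_{t_i} [g]$ really lies in $B$ so that the transport identity is applicable. I do not expect a genuine obstacle beyond this bookkeeping, since the equivalence $t_1 \sim t_2$ was defined exactly so as to make $\ee_{t_1} = \ee_{t_2}$, which is all the computation consumes.
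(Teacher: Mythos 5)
Your proof is correct and rests on the same two ingredients as the paper's own argument: the balancing relation of $\otimes_{\kpg}$ applied to elements of $B\subseteq\kpg$ (via $f^{e}=fe$) together with the equality $\ld(e_{t_1})=\ld(e_{t_2})$. The only difference is organizational — you normalize every term to the form $1\otimes\ld(b)$ through a single transport identity, whereas the paper shuttles the idempotents back and forth across the tensor sign in place; the underlying maneuver is identical.
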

\begin{proof}
	(i) \begin{align*}
	e_{t_1} \otimes 1_{\K \Delta} & = e_{t_1}^2 \otimes 1_{\K \Delta} = e_{t_1} ^{e_{t_1}} \otimes 1_{\K \Delta} 
	= e_{t_1} \otimes \ld(e_{t_1}) =  e_{t_1} \otimes \ld(e_{t_2}) \\ 
	& = e_{t_1} ^{e_{t_2}} \otimes 1_{\K \Delta} = e_{t_1}e_{t_2} \otimes 1_{\K \Delta} . 
	\end{align*}
	Switching $t_1$ and $t_2$ we get $e_{t_2} \otimes 1_{\K \Delta} =  e_{t_2}e_{t_1} \otimes 1_{\K \Delta} = e_{t_1}e_{t_2} \otimes 1_{\K \Delta}$.
	
	(ii) Given $g \in G$, \begin{align*}
	[g^{-1}] e_{t_1} [g] \otimes 1_{\K \Delta} & = (e_{t_1} ^{[g]}) \otimes 1_{\K \Delta} 
	= (e_{t_1} ^{e_{t_1}}) ^{[g]} \otimes 1_{\K \Delta} 
	=  e_{t_1} ^{e_{t_1}  [g]} \otimes 1_{\K \Delta} \\
	& =  e_{t_1} \otimes \ld( e_{t_1}  [g])  
	=  e_{t_1} \otimes \ld( e_{t_1}) \ld ( [g])  
	=  e_{t_1} \otimes \ld( e_{t_2}) \ld ( [g])  \\
	&  =  e_{t_1} \otimes \ld( e_{t_2} [g])  
	= (e_{t_1} ^{e_{t_2} [g]}) \otimes 1_{\K \Delta}
	= (e_{t_1} ^{ e_{t_2}} ) ^{[g]} \otimes 1_{\K \Delta} \\
	& = (e_{t_1} e_{t_2} )^{[g]} \otimes 1_{\K \Delta}
	= [g^{-1}] e_{t_1} e_{t_2}  [g] \otimes 1_{\K \Delta}. 
	\end{align*}
	Clearly we also have 
	\[
	[g^{-1}] e_{t_2} [g] \otimes 1_{\K \Delta}  
	= [g^{-1}] e_{t_2} e_{t_1}  [g] \otimes 1_{\K \Delta} 
	= [g^{-1}] e_{t_1} e_{t_2}  [g] \otimes 1_{\K \Delta}, 
	\]
	concluding the proof.  
\end{proof}

In what follows,  given a property $P$  the symbol  $[ P ]$  denotes the following Boolean operator: 
\[
[P] = 
\left\{
\begin{array}{ll}
1 & \text{if $P$ holds}, \\
0 & \text{otherwise}.
\end{array}
\right. 
\]
We are going to prove the following fact.
\begin{prop}\label{psi-kdelta-linear} The map  
	$\psi : \ld(B) \to B \otimes_{\kpg} \K \Delta$ defined in \eqref{definition.psi} is a homomorphism of right 
	$\K \Delta $-modules.
\end{prop}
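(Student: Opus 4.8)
The plan is to check the module identity $\psi(d \ract a) = \psi(d)\cdot a$ on $\K$-bases; by $\K$-bilinearity it suffices to treat $d = (C,1)$ with $C \in \VD$ and $a = (D,t)$ a basic morphism of $\kd$ (so $D \in \VD$ and $D \ni t\m$). For the left-hand side, a short groupoid computation from \eqref{right.kDelta.structure} gives $(C,1) \ract (D,t) = (tD,t\m)(C,1)(D,t)$, which equals $(D,1)$ when $tD = C$ and $0$ otherwise; hence $\psi((C,1)\ract(D,t)) = [\,tD=C\,]\,\pt(D,1)\otimes \one$.

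For the right-hand side, $\psi(C,1)\cdot(D,t) = \pt(C,1) \otimes (D,t)$. The first reduction uses that $(D,t) = \ld(t)(D,1) = [t]\cdot(D,1)$ in the left $\kpg$-module $\kd$ (immediate from \eqref{lambdaH}, legitimate because $D \ni t\m$); moving $[t]$ across the tensor by the balancing relation $b\cdot[t]\otimes x = b\otimes [t]\cdot x$ rewrites this as $\pt(C,1)^{[t]}\otimes(D,1)$, where $\pt(C,1)^{[t]} = [t\m]\pt(C,1)[t]$ again lies in $B$ since $B$ is a right $\kpg$-submodule of $\kpg$. The second reduction writes $(D,1) = \pt(D,1)\cdot\one$ (Lemma \ref{lemma.p17}) and pulls the element $\pt(D,1)\in B$ back into the left tensor slot; as the right $\kpg$-action of an element of $B$ is just multiplication (the remark that $f^{e}=fe$ for $e,f \in B$), the right-hand side becomes $\bigl(\pt(C,1)^{[t]}\,\pt(D,1)\bigr)\otimes\one$.

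Both sides are now of the shape $b\otimes\one$ with $b\in B$, and I would verify that the two elements of $B$ have the same image under $\ld$. Indeed $\ld(\pt(C,1)^{[t]}) = \ld(t)^{*}(C,1)\ld(t) = (C,1)\ract\ld(t) = (t\m C,1)$ (a morphism computation in which the cross terms vanish and the vertex $t\m C$ is produced exactly when $tD=C$ forces $D = t\m C$), so multiplying by $\ld(\pt(D,1)) = (D,1)$ yields $[\,t\m C = D\,](D,1) = [\,tD=C\,](D,1) = \ld([\,tD=C\,]\pt(D,1))$. Thus the proposition reduces to the single statement that $b\otimes\one$ depends only on $\ld(b)$, i.e. $\ld(b_1)=\ld(b_2)$ implies $b_1\otimes\one=b_2\otimes\one$. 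This is the heart of the matter and the main obstacle.

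To dispose of it I would argue as follows. Put $b = b_1 - b_2$ and choose a finite $Y\subseteq G$ containing all indices occurring in $b$; inside the finite-dimensional commutative algebra $\K[e_g : g\in Y]$ write $b = \sum_{T\subseteq Y} c_T a_T$ in terms of the orthogonal idempotent atoms $a_T = \prod_{g\in T} e_g \prod_{g\in Y\setminus T}(1-e_g)$. The decisive point is the identity $a\otimes\one = a^{a}\otimes\one = a\otimes(a\cdot\one) = a\otimes\ld(a)$, valid for every idempotent $a\in B$ because $a^{a}=a^{2}=a$. Since $\ld(a_T) = \sum_{C\cap Y = T}(C,1)$ and these have pairwise disjoint vertex-supports as $T$ varies, $\ld(b)=0$ forces $c_T=0$ whenever $\ld(a_T)\neq 0$, while for the surviving atoms $a_T\otimes\one = a_T\otimes\ld(a_T)=0$; hence $b\otimes\one=0$. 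Applied to $b_1-b_2$ this finishes the proof, Lemma \ref{lemma.october} providing an alternative, representative-by-representative route to the same reduction.
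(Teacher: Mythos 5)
Your proof is correct, but it takes a genuinely different route from the paper's. The paper verifies $\psi((C,1)\ract\ld(g))=\psi(C,1)\ld(g)$ by expanding both sides as explicit products $\prod e_t\prod(1-e_f)$ indexed by the partitions $C=C'\cup C''$ and $g^{-1}C=(g^{-1}C)'\cup(g^{-1}C)''$ of the fixed system of representatives $X$, matching the factors one at a time via Lemma \ref{lemma.october}, and finally invoking that $\kd$ is generated as an algebra by the elements $\ld(g)$. You instead test the identity on the basis of morphisms $(D,t)$, use the balancing relations over $\kpg$ to bring both sides into the form $b\otimes\one$ with $b\in B$, and reduce the whole proposition to the single statement that $b\otimes\one$ depends only on $\ld(b)$. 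Your proof of that statement is sound: decomposing $b$ into the orthogonal atoms $a_T$ and using $a_T\otimes\one=a_T\otimes\ld(a_T)$, the hypothesis $\ld(b)=0$ kills the coefficients of exactly those atoms with $\ld(a_T)\neq 0$ (by linear independence of the $(C,1)$ and disjointness of the vertex supports), while the remaining atoms satisfy $a_T\otimes\one=a_T\otimes 0=0$; this works for any choice of the (possibly non-unique) coefficients $c_T$. Your key lemma is strictly stronger than Lemma \ref{lemma.october}, which is its special case $b_1=e_{t_1}$, $b_2=e_{t_2}$ with $t_1\sim t_2$ together with conjugates, and as a bonus it gives Lemma \ref{lemma.p19} in one line, since $\ld(\pt(\ld(b)))=\ld(b)$ implies $\pt(\ld(b))\otimes\one=b\otimes\one$. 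What the paper's computation buys is that it never needs the full factorization of $-\otimes\one$ through $\ld$, only representative-by-representative substitutions; what your route buys is a shorter and more conceptual argument that avoids the bookkeeping with $C'$ and $C''$ entirely. One small imprecision to fix in the write-up: $(C,1)\ract\ld(t)$ equals $(t^{-1}C,1)$ only when $t\in C$ (equivalently, when $t^{-1}C\in\mathcal{V}_\Delta$) and is zero otherwise; the omitted boolean factor is harmless because $t^{-1}C=D$ for a vertex $D$ already forces $t\in C$, so your final identity $[\,tD=C\,](D,1)$ is the correct one.
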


\begin{proof} We  shall show that 
	\[
	\psi ( (C,1) \ract \ld(g))  = \psi  (C,1) \ld(g) 
	\]
	for all $(C,1) \in \ld(B)$ and $g \in G$. 
	On one hand, we have 
	\begin{align*}
	\psi ( (C,1) \ract \ld(g)) & = \pt \left((C,1) \ract \ld(g)\right) \otimes 1_{\kd} \\
	& = \pt \left( (C,1) \ract \sum_{D \ni g^{-1}} (D,g) \right) \otimes 1_{\kd} \\
	& =\sum_{D \ni g^{-1}} \pt \left(  (gD,g^{-1}) (C,1) (D,g) \right)\otimes 1_{\kd} \\
	& = [g \in C ] \pt  (g^{-1} C, 1) \otimes 1_{\kd} \\
	& = [g \in C ]  \prod_{ t \in (g^{-1} C)'}e_t \prod_{f \in (g^{-1} C) ''} (1 - e_f)  \otimes 1_{\kd}.
	\end{align*}
	
	Since $C \ni 1$ we have that $g^{-1} \in g^{-1} C$, and as a consequence $g^{-1} \sim x_{g^{-1}} \in (g^{-1}C) '$. It follows from Lemma
	\ref{lemma.october} that 
	\[
	e_{x_{g^{-1}}} \otimes \one = e_{g^{-1}} \otimes \one
	\]
	and hence
	\begin{align*}
	\psi ( (C,1) \ract \ld(g)) & = [g \in C ]  \prod_{ t \in (g^{-1} C)'}e_t \prod_{f \in (g^{-1} C) ''} (1 - e_f)  \otimes 1_{\kd} \\
	& = [g \in C ] \ e_{x_{g^{-1}}} \prod_{ t \in (g^{-1} C)'} e_t \prod_{ f \in (g^{-1} C)''} (1 - e_f)  \otimes \one \\
	& = [g \in C ] \ e_{g^{-1}} \prod_{ t \in (g^{-1} C)'} e_t \prod_{ f \in (g^{-1} C)''} (1 - e_f)  \otimes \one\\
	& = (*).
	\end{align*}
	
	On the other hand
	\begin{align*}
	\psi (C,1)  \ld(g) & = (\pt (C,1) \otimes 1_{\kd} ) \ld(g) = \pt(C,1) \otimes \ld(g) \\ 
	& = (\pt (C,1) ^{[g]}) \otimes 1_{\kd} \\
	& = \left(\prod_{t \in C'} e_t \prod_{t \in C''} (1 - e_t)    \right) ^{[g]} \otimes 1_{\kd} \\
	& = [g^{-1}] \left(\prod_{t \in C'} e_t \prod_{f \in C''} (1 - e_f)    \right) [g] \otimes 1_{\kd} \\
	& = \prod_{t \in C'} e_{g^{-1}t}  \prod_{f \in C''} (1 - e_{g^{-1}f})  e_{g^{-1}} \otimes 1_{\kd} \\
	& = e_{g^{-1}}\prod_{t \in C'} e_{g^{-1}t}  \prod_{f \in C''} (1 - e_{g^{-1}f})  \otimes 1_{\kd} \\
	& = (**).
	\end{align*}
	
	We shall  check that (*) = (**). 
	
	\begin{claim}  If $g\in C,$ then 
		
		\begin{equation} \label{first.products} 
		e_{g^{-1}} \prod_{ t \in (g^{-1} C)'} e_t \otimes \one = e_{g^{-1}}\prod_{t \in C'} e_{g^{-1}t}  \otimes 1_{\kd}.
		\end{equation} 
		
	\end{claim}
	
	Indeed, if $t \in C'$ then  $g^{-1}t   \in g^{-1}C' \subseteq g^{-1}C$ and hence $g^{-1}t \sim x_{g^{-1}t} \in (g^{-1}C)' $. 
	Using once more Lemma \ref{lemma.october}, we obtain the equality  
	\[
	e_{g^{-1}t} \otimes \one = e_{x_{g^{-1}t}} \otimes \one , 
	\] 
	which ensures that  every element $e_{g^{-1}t} $, with $ t\in C' $, occurring in the RHS of \eqref{first.products}  may be substituted by some $e_{t'} $, $ t'\in (g^{-1}C)' $, which occurs in the LHS of \eqref{first.products}.
	
	Conversely, consider an element $e_{t'} $ with $t' \in (g^{-1}C)'$ as in the LHS of \eqref{first.products}. There is $\beta \in C$ such that $t' = g^{-1} \beta$; let $x_\beta \in C'$ be its representative, that is, $\beta \sim x_\beta$. Then 
	\begin{align*}
	e_{g^{-1}}e_{t'} & = e_{g^{-1}}e_{g^{-1}\beta} =e_{g^{-1}\beta} e_{g^{-1}} = [g^{-1}]e_\beta[g]  ,
	\end{align*}
	hence, by Lemma \ref{lemma.october}, 
	\[
	e_{g^{-1}}e_{t'} \otimes \one  = [g^{-1}]e_\beta[g] \otimes \one  = 
	[g^{-1}]e_{x_\beta}[g] \otimes \one = e_{g^{-1}}e_{g^{-1}x_\beta} \otimes \one.
	\]
	It follows that every $e_{t'}$, where $t'\in (g^{-1}C)'$, occurring in the LHS of \eqref{first.products} may be replaced by an $e_{g^{-1}t}$, $t \in C'$, as in the RHS of \eqref{first.products}, which completes the proof of our claim. 
	
	This takes  care of  the first products appearing in (*) and (**). Now we will   consider the second products.
	
	\begin{claim} If $g\in C,$ then 
		\begin{equation} \label{second.products}
		e_{g^{-1}} \prod_{ f \in (g^{-1} C)''} (1 - e_f)  \otimes \one = e_{g^{-1}} \prod_{f \in C''} (1 - e_{g^{-1}f})   \otimes 1_{\kd}. 
		\end{equation}
	\end{claim}
	
	If $f \in C''$ then $f \notin C'$ and therefore $g^{-1 } f \notin g^{-1}C$. Let $x_{g^{-1}f} \in X$ be the representative of $g^{-1}f$. 
	Since  $g^{-1 } f \notin g^{-1}C$, by Remark \ref{obs.1} we also have that $x_{g^{-1 } f} \notin g^{-1}C$, which implies that
	$x_{g^{-1 } f} \in (g^{-1}C)''$.
	As a consequence of Lemma \ref{lemma.october}, 
	\[
	(1 - e_{g^{-1 } f}) e_{g^{-1}} \otimes \one = 
	(1 - e_{x_{g^{-1 } f}}) e_{g^{-1}} \otimes \one. 
	\]
	Hence each factor $1 - e_{g^{-1}f}$, where $f \in C''$, occurring in the RHS of \eqref{second.products} may be replaced by a factor $1 - e_t$, with $t \in (g^{-1}C)''$, in the LHS of  \eqref{second.products}.

	Pick a factor $1-e_f$ of the LHS of  \eqref{second.products}, where $f \in (g^{-1}C)''$. 
	\[
	f \in (g^{-1}C)'' \implies f \in X \setminus g^{-1}C  \implies f \notin g^{-1}C \implies gf \notin C. 
	\]
	Let $x_{gf} \in X$ be the representative of $gf$. 
	Since $gf \notin C$, once more Remark \ref{obs.1} implies  that $x_{gf} \notin C$ and therefore $x_{gf} \in C''$. 
	Using  again Lemma ~ \ref{lemma.october}, we have that 
	\begin{align*}
	& (1 - e_{g^{-1}x_{gf}}) e_{g^{-1}} \otimes \one  = 
	(1 - e_{g^{-1}x_{gf}}) [g^{-1}][g] \otimes \one = \\
	= &  [g^{-1}] (1 - e_{x_{gf}})[g] \otimes \one  =   
	[g^{-1}] (1 - e_{gf})[g] \otimes \one  \\ =  &  (1 - e_{f}) e_{g^{-1}} \otimes \one . 
	\end{align*}
	Therefore each term $(1-e_f)$, where $f \in (g^{-1}C)''$, occurring in the LHS of  \eqref{second.products} may be replaced by a corresponding term $(1-e_{g^{-1}x})$, where $x \in C''$, in the RHS of  \eqref{second.products},
	and the claim follows. 
	
   Claims \eqref{first.products} and \eqref{second.products} above imply that  if $g\in C,$ then 
	\[
	\ e_{g^{-1}} \prod_{ t \in (g^{-1} C)'} e_t \prod_{ f \in (g^{-1} C)''} (1 - e_f)  \otimes \one
	= e_{g^{-1}}\prod_{t \in C'} e_{g^{-1}t}  \prod_{f \in C''} (1 - e_{g^{-1}f})  \otimes 1_{\kd} 
	\] 
	and therefore the equality (*) = (**) holds 	when $g \in C$ .

	When $g \notin C$ we have (*) = 0 ; we shall see that in this case (**) = 0 as well. 
	Assuming $g \notin C$,  choose $x_g \in X$ such that $x_g \sim g$. In this case $x_g \in C''$ and, by  Lemma ~ \ref{lemma.october},
	\[
	e_{g^{-1}} \otimes \one = [g^{-1}]e_g [g] \otimes \one = 
	[g^{-1}]e_{x_g} [g] \otimes \one  = e_{g^{-1}x_g} e_{g^{-1}} \otimes \one
	\]
	and since the term $1 - e_{g^{-1}x_g}$ appears in (**)  it follows that (**)=0. 
	
	This proves the equality $ \psi ( (C,1) \ract \ld(g))  = \psi  (C,1)  \ld(g)$
	for all $(C,1) \in \ld(B)$ and $g \in G$. 
	Since by Proposition 2.2. of \cite{DdLP}  the algebra $\kd$ is generated by the elements $\ld(g)$, with $g \in G$, we conclude that $\psi$ is a map of right $\K \Delta$-modules. \end{proof}

\subsection{ The equality $\psi \circ \varphi = Id$. Conclusion of the proof of Theorem \ref{isomorphism.Bdelta}.}

\begin{prop}
	$\pt : \ld(B)\to B$ is a multiplicative map. 
\end{prop}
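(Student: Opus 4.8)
The plan is to reduce everything to the basis of $\ld(B)$ provided by Lemma~\ref{Bdelta} and then to exploit two structural features: the basis elements $(C,1)$ are orthogonal idempotents in $\kd$, and $B$ is commutative. Since $\pt$ is $\K$-linear by construction, multiplicativity will follow once I establish
\[
\pt\big((C,1)(D,1)\big) = \pt(C,1)\,\pt(D,1)
\]
for all vertices $C,D \in \V_\Delta$. Recall that $X$ is finite, so each $\pt(C,1) = \prod_{t\in C'} e_t \prod_{f\in C''}(1-e_f)$ is a genuine (finite) element of $B$.

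First I would record that the elements $(C,1)$ are pairwise orthogonal idempotents of $\kd$: the composite $(C,1)\circ(D,1)$ is defined in the groupoid $\Delta$ exactly when the range of $(D,1)$ equals the domain of $(C,1)$, i.e. when $C=D$. Hence $(C,1)(D,1) = (C,1)$ if $C=D$ and $(C,1)(D,1)=0$ otherwise, which splits the verification into two cases. In the diagonal case $C=D$ the left-hand side is $\pt(C,1)$ and the right-hand side is $\pt(C,1)^2$, so I only need that $\pt(C,1)$ is idempotent; this is immediate, as it is a product of commuting idempotents of the commutative algebra $B$ (each $e_t$ and each $1-e_f$ being idempotent), and therefore idempotent itself.

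In the off-diagonal case $C\neq D$ the left-hand side is $\pt(0)=0$, so I must show that $\pt(C,1)\,\pt(D,1)=0$. Here I would invoke Remark~\ref{obs.2}, which guarantees $C'\neq D'$. Choosing $x$ in the symmetric difference, say $x\in C'\setminus D'$, one has $x\in C'$, so $e_x$ occurs among the factors of $\pt(C,1)$, while $x \in X\setminus D' = D''$, so $1-e_x$ occurs among the factors of $\pt(D,1)$. Because $B$ is commutative and $e_x(1-e_x)=0$, the product $\pt(C,1)\,\pt(D,1)$ vanishes; the remaining possibility $x\in D'\setminus C'$ is handled symmetrically.

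The argument is structural rather than computational, so I do not expect a genuine obstacle. The single point demanding care is the bookkeeping of the partition $X = C'\cup C''$, and in particular the equivalence $C\neq D \iff C'\neq D'$ coming from Remark~\ref{obs.2}: it is precisely this fact that forces the complementary factors $e_x$ and $1-e_x$ to appear in the two products and thereby annihilate their product.
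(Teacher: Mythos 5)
Your proof is correct and follows essentially the same route as the paper: reduce to the basis elements $(C,1)$, note they are orthogonal idempotents, handle the diagonal case by idempotency of $\pt(C,1)$, and kill the off-diagonal case by exhibiting an $x\in X$ with $e_x$ a factor of one product and $1-e_x$ a factor of the other. The only cosmetic difference is that you locate such an $x$ via Remark~\ref{obs.2} ($C\neq D\Rightarrow C'\neq D'$), whereas the paper picks $\alpha\in C\setminus D$ and passes to its representative $x_\alpha$ using Remark~\ref{obs.1}; these amount to the same observation.
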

\begin{proof}
	\[
	\pt \left( (C,1) (D,1) \right) = [C = D] \pt (C,1) = [C = D] \prod_{t \in C'} e_t \prod_{f \in C''} (1 - e_f), 
	\]	
	and 
	\[
	\pt (C,1) \pt (D,1) =  \prod_{t_1 \in C'} e_{t_1} \prod_{f_1 \in C''} (1 - e_{f_1})
	\prod_{t_2 \in D'} e_{t_2} \prod_{f_2 \in D''} (1 - e_{f_2}) .
	\]
	
	Clearly, if $C = D$ then  $  \pt (C,1) ^2  = \pt \left( (C,1)^2 \right) $. 
	Assume that $C \neq D$. Then either $C \setminus D \neq \varnothing$ or $D \setminus C \neq \varnothing$. 
	If  there is $\alpha \in C   \setminus D$ then $\alpha \sim x_\alpha \in D''$; but if $\alpha \in C$ then $x_\alpha \in C'$ and hence 
	$\pt (C,1) \pt (D,1) = 0 = \pt \left( (C,1) (D,1) \right) $. 
	The other case is analogous, and hence $\pt$ preserves products.  \end{proof}

\begin{lemma} \label{lemma.A} 
	$\pt (\one) \otimes \one = \idB \otimes \one $.
\end{lemma}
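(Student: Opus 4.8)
The plan is to collapse the whole statement onto a single ``transfer'' identity across the tensor product, namely $b_0 \otimes \one = 1 \otimes \ld(b_0)$ for $b_0 \in B$, and then to feed this into Lemma~\ref{lemma.p17}. Once the right ingredients are assembled the computation is almost immediate, so I do not anticipate a long argument.

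First I would record that $\one$ belongs to $\ld(B)$. Since every vertex of $\Delta$ contains the neutral element of $G$, formula \eqref{lambdaH} specialises at $g=1$ to $\ld([1]) = \sum_{C \in \V_{\Delta}}(C,1)$, which is precisely the identity of the groupoid algebra $\K\Delta$; as $1 = e_1 \in B$, this shows $\one = \ld(1) \in \ld(B)$. Here the finiteness of $\V_{\Delta}$ guarantees that $\one$ is a finite sum, so that Lemma~\ref{lemma.p17} will indeed be applicable to it.

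Next I would establish the transfer identity. For any $b_0 \in B$ I would combine three facts: that the right $\kpg$-action on $B$ satisfies $1^{b_0} = 1\cdot b_0 = b_0$ (the remark that $f^{e}=fe$ for $e,f\in B$); that the left $\kpg$-action on $\K\Delta$ is given by $\ld$ via \eqref{KparG-Delta}; and that $\one$ is the identity of $\K\Delta$, so $\ld(b_0)\,\one = \ld(b_0)$. Balancing the tensor over $\kpg$ then yields
$$
b_0 \otimes \one = 1^{b_0} \otimes \one = 1 \otimes \ld(b_0)\,\one = 1 \otimes \ld(b_0).
$$
Applying this with $b_0 = \pt(\one) \in B$ and invoking Lemma~\ref{lemma.p17} to evaluate $\ld(\pt(\one)) = \one$, I would conclude
$$
\pt(\one) \otimes \one = 1 \otimes \ld(\pt(\one)) = 1 \otimes \one = \idB \otimes \one .
$$

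The only genuine obstacle is conceptual: recognising that the claim dissolves as soon as one can slide the factor $b_0 \in B$ across the tensor sign, converting it into $\ld(b_0)$ on the $\K\Delta$-side; after that Lemma~\ref{lemma.p17} does all the work and no real calculation remains. The two points that demand care are checking that $\one \in \ld(B)$, so that Lemma~\ref{lemma.p17} is legitimately applicable, and correctly using the right-action identity $1^{b_0} = b_0$ when balancing the tensor product.
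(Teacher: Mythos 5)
Your proof is correct, but it takes a genuinely different and shorter route than the paper's. The key point is your ``transfer identity'': since $B$ is a subalgebra of $\kpg$, any $b_0\in B$ can be balanced across the tensor sign in $B\otimes_{\kpg}\kd$, and because $1^{b_0}=b_0$ (the remark $f^{e}=fe$ for $e,f\in B$) while $b_0$ acts on $\one$ through $\ld$, one gets $b_0\otimes\one=1^{b_0}\otimes\one=1\otimes\ld(b_0)\one=1\otimes\ld(b_0)$. Applying this with $b_0=\pt(\one)$ and invoking Lemma~\ref{lemma.p17} (legitimately, since $\one=\ld(e_1)\in\ld(B)$ by \eqref{formula.lambda-delta(e_g)} and the finiteness of $\V_\Delta$) immediately gives $\pt(\one)\otimes\one=1\otimes\one=\idB\otimes\one$. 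The paper instead argues on the $B$-side: it expands $\idB=\sum_{S\subseteq X}P_S$ over all subsets $S$ of the transversal $X$, proves the orthogonality claim $P_S\otimes(C,1)=0$ for $S\neq C'$, and uses it to collapse $\idB\otimes\one$ to $\sum_{D\in\V_\Delta}P_{D'}\otimes\one=\pt(\one)\otimes\one$. Your argument buys brevity and reuses Lemma~\ref{lemma.p17} instead of redoing an idempotent computation; the paper's computation is more self-contained on the $B$-side and makes explicit which summands $P_S$ die in the tensor product. Both are valid, and neither introduces circularity since Lemma~\ref{lemma.p17} precedes this statement.
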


\begin{proof}
	For each $S \subseteq X$, let $P_S \in B $ be the element
	\[
	P_S = \prod_{t \in S} e_t \prod_{f \in X \setminus S} (1 - e_f).
	\]

	\begin{claim} \label{tensor.nulo}
		Let $C \in \V_\Delta$. If $S \neq C' $ then 	$P_S \otimes (C,1) = 0$ in  $B \otimes_{\kpg} \kd$. 
	\end{claim}
	
	\begin{proof}
		Case 1. Assume that there exists $t_0 \in S \setminus C' $ (which is the same as requiring that $t_0 \in S \setminus C$). 
		\begin{align*}
		P_S \otimes (C,1) &  = \prod_{t \in S} e_t \prod_{f \in X \setminus S} (1 - e_f) \otimes (C,1) \\
		& = e_{t_0}\prod_{t \in S} e_t \prod_{f \in X \setminus S} (1 - e_f) \otimes (C,1) \\
		& = (P_S ^{e_{t_0}} ) \otimes (C,1) \\
		& = P_S \otimes \ld (e_{t_0}) (C,1)\\
		& = P_S \otimes \sum_{D \ni t_0} (D,1) (C,1) = 0, 
		\end{align*}	
		since $D \neq C$ for all such $D$.

		Case 2. Suppose that there exists $t_0 \in C' \setminus S$. Then $t_0 \in X \setminus S$, and 
		\begin{align*}
		P_S \otimes (C,1) &  = P_S (1 - e_{t_0}) \otimes (C,1) \\
		& = (P_S ^{(1 - e_{t_0})}) \otimes (C,1) \\
		& = P_S   \otimes \ld (1 - e_{t_0})  (C,1) \\
		& = P_S   \otimes \left(1 -  \sum_{D \ni t_0} (D,1)  \right)  (C,1) \\
		& = P_S \otimes \sum_{E \notni t_0} (E,1) (C,1) = 0,
		\end{align*}
		where the last equality holds because $t_0 \in C' \subseteq C$. 
	\end{proof}
	
	Note that 
	\begin{equation}
	\idB = \sum_{S \subseteq X} P_S
	\end{equation}
	since 
	\[
	\idB = \prod_{t \in X} \idB  
	=  \prod_{t \in X} ((\idB -e_t) + e_t)
	= \sum_{S \subseteq X} P_S.
	\]
	It follows that
	\begin{equation}
	\idB \otimes \one = \sum_{D \in \V_\Delta} P_{D'} \otimes \one.
	\end{equation}
	Indeed, by Claim ~ \ref{tensor.nulo}, 
	
	\begin{align*}
	\idB \otimes \one 
	& = \sum_{S \subseteq X} P_S \otimes \one 
	= \sum_{S \subseteq X} P_S \otimes \sum_{C \in \V_\Delta} (C,1) 
	= \sum_{C\in \V_\Delta} P_{C' } \otimes (C,1) \\ 
	& = 
	\sum_{D \in \V_\Delta} P_{D'} \otimes \sum_{C \in \V_\Delta} (C,1) = \sum_{D \in \V_\Delta} P_{D'} \otimes \one .
	\end{align*}
	
	Finally
	\[
	\pt (\one) \otimes \one = \pt \left(\sum_{D \in \V_\Delta} (D,1) \right) \otimes \one = \sum_{D \in \V_\Delta} P_{D'} \otimes \one = \idB \otimes \one. 
	\]
\end{proof}

\begin{lemma} \label{lemma.p19}
	$\psi \circ \varphi \mid_{B \otimes  \one} = Id_{B \otimes  \one}$.
\end{lemma}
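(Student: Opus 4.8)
The goal is the identity $\psi\circ\varphi(b\otimes\one)=b\otimes\one$ for every $b\in B$, where $\one=1_{\K\Delta}$. The plan is first to simplify the left-hand side. Since $\ld(b)$ is a $\K$-combination of the orthogonal idempotents $(C,1)$ (Lemma~\ref{Bdelta}), one checks at once that $\ld(b)\ract\one=\sum_{C\in\V_\Delta}(C,1)\,\ld(b)\,(C,1)=\ld(b)$, so that $\varphi(b\otimes\one)=\ld(b)\ract\one=\ld(b)$ and hence $\psi\circ\varphi(b\otimes\one)=\psi(\ld(b))=\pt(\ld(b))\otimes\one$. Thus everything reduces to proving $\pt(\ld(b))\otimes\one=b\otimes\one$. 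Both sides are $\K$-linear in $b$, and $B$ is spanned by the monomials $m_F=\prod_{g\in F}e_g$ with $F\subseteq G$ finite; the case $F=\varnothing$, that is $b=\idB$, is exactly Lemma~\ref{lemma.A}, since $\ld(\idB)=\one$. So it suffices to treat a nonempty monomial $m_F$.

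The key technical step, generalizing Lemma~\ref{lemma.october}(i), is the reduction rule that for every $b\in B$ and $g\in G$ one has $b\,e_g\otimes\one=b\,e_{x_g}\otimes\one$, where $x_g\in X$ is the representative of $g$. I would prove this exactly as Lemma~\ref{lemma.october}(i): using $f^{e}=fe$ and the defining relation of the tensor product, $b\,e_g\otimes\one=(be_g)\cdot e_g\otimes\one=be_g\otimes\ld(e_g)=be_g\otimes\ld(e_{x_g})=be_ge_{x_g}\otimes\one$, where $\ld(e_g)=\ee_g=\ee_{x_g}=\ld(e_{x_g})$ because $g\sim x_g$; the symmetric computation gives $be_{x_g}\otimes\one=be_ge_{x_g}\otimes\one$ as well. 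Applying this rule once for each factor of $m_F$ yields $m_F\otimes\one=m_T\otimes\one$, where $T=\{x_g\mid g\in F\}\subseteq X$ and $m_T=\prod_{x\in T}e_x$. Moreover $\ld(m_F)=\sum_{D\supseteq F}(D,1)=\sum_{D\supseteq T}(D,1)=\ld(m_T)$, because $D\ni g\iff D\ni x_g$ for $g\sim x_g$ (Remark~\ref{obs.1}); consequently $\pt(\ld(m_F))=\pt(\ld(m_T))$. This reduces the whole claim to monomials supported on the finite set $X$.

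Finally, for $T\subseteq X$ I would compute $m_T\otimes\one$ directly. Writing $\idB=\sum_{S\subseteq X}P_S$ with $P_S$ as in the proof of Lemma~\ref{lemma.A}, the fact that the $e_x$ are commuting idempotents gives $m_T P_S=P_S$ if $T\subseteq S$ and $m_T P_S=0$ otherwise, so $m_T\otimes\one=\sum_{S\supseteq T}P_S\otimes\one$. By Claim~\ref{tensor.nulo} each $P_S\otimes\one$ vanishes unless $S=D'$ for some $D\in\V_\Delta$; since $T\subseteq X$ the conditions $D'\supseteq T$ and $D\supseteq T$ coincide, whence $m_T\otimes\one=\sum_{D\supseteq T}P_{D'}\otimes\one=\pt(\ld(m_T))\otimes\one$. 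Combining with the previous paragraph gives $m_F\otimes\one=\pt(\ld(m_F))\otimes\one$, as required. The main obstacle is precisely the passage from the one-generator identity $e_g\otimes\one=e_{x_g}\otimes\one$ to arbitrary $b$: because $-\otimes\one$ is not multiplicative, the multiplicativity of $\pt\circ\ld$ cannot be exploited directly, and the argument instead works by transporting everything onto the finite index set $X$, where the orthogonal idempotents $P_S$ make the computation explicit.
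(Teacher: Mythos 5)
Your proof is correct, but it is organized differently from the paper's. The paper verifies the identity only on the algebra generators $e_g$: it computes $\psi\circ\varphi(e_g\otimes\one)=\sum_{C\ni g}\pt(C,1)\otimes\one$, pulls a factor $e_g$ out of this sum via Lemma~\ref{lemma.october}, then adds the vanishing complementary sum $e_g(1-e_g)\sum_{D\not\ni g}\pt(D,1)\otimes\one$ and absorbs the $(1-e_g)$ into the factors $(1-e_{x_g})$ so as to reassemble $e_g\,\pt(\one)\otimes\one$, which Lemma~\ref{lemma.A} identifies with $e_g\otimes\one$; the passage from the generators $e_g$ to arbitrary products is left implicit (it follows from the right $\K\Delta$-linearity of $\psi\circ\varphi$ given by Proposition~\ref{psi-kdelta-linear}, since $m_F\otimes\one=(m_{F\setminus\{g\}}\otimes\one)\ld(e_g)$). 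You instead treat an arbitrary monomial $m_F$ at once: you transport it to a monomial $m_T$ supported on the finite set $X$ of representatives using the rule $be_g\otimes\one=be_{x_g}\otimes\one$ (a legitimate relative version of Lemma~\ref{lemma.october}(i), proved by the same computation), note that $\ld(m_F)=\ld(m_T)$ by Remark~\ref{obs.1}, and then evaluate $m_T\otimes\one$ explicitly from the orthogonal decomposition $\idB=\sum_{S\subseteq X}P_S$ combined with Claim~\ref{tensor.nulo}. Both arguments rest on the same ingredients (Lemma~\ref{lemma.october}, the idempotents $P_S$, Claim~\ref{tensor.nulo}, Lemma~\ref{lemma.A}); what yours buys is a direct proof of the statement for every $b\in B$, with no appeal to the linearity/induction step, at the cost of some extra bookkeeping on $X$. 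Your opening simplification $\varphi(b\otimes\one)=\ld(b)\ract\one=\ld(b)$ is also correct, since $\ld(b)$ is a $\K$-combination of the mutually orthogonal idempotents $(C,1)$.
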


\begin{proof}
Let  $g \in G$. 
	\begin{align*}
	& \psi \circ \varphi  (e_g \otimes 1_{\K \Delta}  ) = \psi ( \lambda_{\Delta } (e_g) \ract 1_{\K \Delta} )  =  \psi (\ld (e_g))  = 	 \tilde{\pi} \left( \ld (e_g) \right)  \otimes 1_{\K \Delta} \\ = & \tilde{\pi} \left( \sum_{g \in C} (C,1) \right) \otimes 1_{\K \Delta} 
	= \sum_{g \in C} \prod_{t \in C'} e_t \prod_{f \in C''} (1 - e_f) \otimes 1_{\K \Delta}. \\
	\end{align*}

	Let $C$ be any vertex of $\Delta$ containing $g$ and let $x_g \in C' $ be such that $x_g \sim g$. From Lemma \ref{lemma.october} it follows that 
	$
	e_{x_g} \otimes 1_{\K\Delta}  = e_{x_g}e_g \otimes 1_{\K\Delta}
	$
	and therefore
	\[
	\psi \circ \varphi (e_g \otimes 1_{\K\Delta})  =   e_g  \sum_{g \in C} \prod_{t \in C'} e_t \prod_{f \in C''} (1 - e_f) \otimes 1_{\K \Delta}
	= e_g ( \psi \circ \varphi )  (e_g \otimes 1_{\K \Delta}  ).
	\]
	
	Let $D \in \V_\Delta$ be such that $g \notin D$. Then $g \sim x_g \in D''$ and it follows from Lemma \ref{lemma.october} that 
	$(1 - e_{x_g} )\otimes 1_{\K \Delta} = (1 - e_{g} )\otimes 1_{\K \Delta}$. 
	
	Hence, 
		
	\begin{align*}
	& (\psi \circ \varphi) (e_g \otimes \one )  = e_g \sum_{C \ni g} \prod_{t \in C'} e_t \prod_{f \in C'' } (1 - e_f) \otimes \one = \\
	& =  e_g \left(  \sum_{C \ni g} \prod_{t_1 \in C'} e_{t_1} \prod_{f_1 \in C'' } (1 - e_{f_1}) 
	+ (1 - e_g) \sum_{D \notni  g} \prod_{t_2 \in D'} e_{t_2} \prod_{f_2 \in D'' } (1 - e_{f_2})\right)  \otimes \one \\
	& =  e_g \left(  \sum_{C \ni g} \prod_{t_1 \in C'} e_{t_1} \prod_{f_1 \in C'' } (1 - e_{f_1}) 
	+  \sum_{D \notni  g} \prod_{t_2 \in D'} e_{t_2} \prod_{f_2 \in D'' } (1 - e_{x_g})(1 - e_{f_2})\right)  \otimes \one 
	\\
	& =  e_g \left(  \sum_{C \ni g} \prod_{t_1 \in C'} e_{t_1} \prod_{f_1 \in C'' } (1 - e_{f_1}) 
	+  \sum_{D \notni  g} \prod_{t_2 \in D'} e_{t_2} \prod_{f_2 \in D'' } (1 - e_{f_2})\right)  \otimes \one 
	\end{align*}
	
	\begin{align*} 
	&   =  e_g \left(  \sum_{C \ni g} \pt (C,1) +  \sum_{D \notni  g} \pt(D,1) \right) \otimes \one 
	=  e_g  \left(  \sum_{C  \in \V_{\Delta}}  \pt (C,1) \right) \otimes \one \\
	& =  e_g \pt (\one) \otimes \one \overset{Lemma ~ \ref{lemma.A}}{=}  e_g \idB \otimes \one  = e_g \otimes \one. 
	\end{align*}	
\end{proof}

To complete the proof of Theorem \ref{isomorphism.Bdelta}, we show that 
$\varphi : B \otimes_{\kparg } \K \Delta \to \ld(B)$ is an isomorphism of right $\K\Delta$-modules with inverse $\varphi^{-1} = \psi$.

Indeed, given $b \otimes v \in B \otimes_{\kparg } \K\Delta $, 
\begin{align*}
(\psi \circ \varphi) (b \otimes v) 
& = \psi (\varphi ( b \otimes \one ) v )   = ((\psi \circ \varphi) (b \otimes \one)) v 
\end{align*}
by Proposition \ref{psi-kdelta-linear}, and  Lemma \ref{lemma.p19} yields  
$$ ((\psi \circ \varphi) (b \otimes \one)) v  = (b \otimes \one ) v = b \otimes v.  $$

On the other hand, given $(D,1) \in \ld(B)$, by Corollary \ref{phipsiC,1} we have that 
\[
(\varphi \circ \psi) (D,1) = (D,1), 
\]
and Theorem \ref{isomorphism.Bdelta} is proved.

 \section{Projective and flat $\kpg$-modules}

 In this section, whenever we make calculations in $\K \Delta$, all sums are over subsets of $G$ that are vertices in the connected component $\Delta$, i.e., subsets that belong to $\V_{\Delta}$.

 When $G$ is a finite group, the partial group algebra $\kpg$ is isomorphic to the algebra of the groupoid $\Gamma(G)$. This finite groupoid is the disjoint union of its connected components $\Delta$, and therefore   
$\kpg \simeq \K \Gamma(G) = \oplus_{\Delta} \K \Delta$ as an algebra, where the sum runs over the components $\Delta$ of $\Gamma(G)$, and each $\K \Delta$ is a two-sided ideal of $\kpg$ which is generated by a central idempotent. 
It then follows that each  $\K \Delta$ is a projective $\kpg$-module when $G$ is a finite group.

The isomorphism $\kpg \simeq \K \Gamma(G) $ does not hold anymore for an infinite group, and we cannot apply the same reasoning for the $\kpg$-modules $\K \Delta$. In fact, as we show in Section \ref{projective-section}, when $G$ is infinite there are certain components $\Delta$ such that $\K \Delta$ is not a  projective $\kpg$-module.  
 However, we also show in 
Section \ref{flat-section} that each algebra $\K \Delta$ is still a \textit{flat} $\kpg$-module.

 \subsection{Non-projective $\kpg$-modules} \label{projective-section}

\begin{prop} \label{proj99}
Suppose that $\Delta$ is a connected component of the groupoid $\Gamma(G)$ with finitely many vertices, $A$ is a vertex of $\Delta$ and $G \setminus A$ is infinite. Then $\K \Delta$ is not a projective left 
$\K_{par} G$-module.
        \end{prop}
\begin{proof}
Recall that we view $\K \Delta$ as $\kpg$-module via the map $\lambda_{\Delta} : \kpg \to \K \Delta$. Note that by \cite[Prop. 2.2]{DdLP}, $\K \Delta$ is generated as a $\K$-algebra by $\{ \lambda_{\Delta}([g]) \ | \ g \in G \}$ and since $\lambda_{\Delta}$ is a homomorphism of $\K$-algebras we deduce that $\lambda_{\Delta}$ is surjective.

 Suppose that $\K \Delta$ is projective as a $\kpg$-module.
Then there is a homomorphism of $\kpg$-modules $f : \K \Delta \to \kpg$ such that $\lambda_{\Delta} \circ f = id_{\K \Delta} .$
 Then for $g^{-1} \in A$ we have
$$ [g] f( (A,1)) = f( \lambda_{\Delta}( [g]) (A,1) ) = f( \sum_{g^{-1} \in C, C \in \V_{\Delta}} (C,g) (A,1)) = f( ( A,g)),$$
and for $h \in G, g^{-1} h^{-1} \not\in A, g^{-1} \in A$ we have
$$[h][g]f((A,1)) = [h] f((A,g)) = f( \lambda_{\Delta}([h]) (A,g)) =  f( \sum_{h^{-1} \in C, C \in \V_{\Delta}} (C,h) (A,g)) = f(0) = 0.$$
Suppose $f((A,1)) =  \sum_{s \in S} [s] \mu_s$, where $ \mu_s \in B$. Then for $g =1_G$, $h^{-1} \in G \setminus A$ we have
$$0 = [h]f((A,1)) = \sum_{s \in S} [h][s] \mu_s = \sum_{s \in S} [h s] e_{s^{ -1}} \mu_s,$$
hence $[h s] e_{s^{ -1}} \mu_s  = 0$ for every $s \in S$. Thus $e_{s^{ -1}} \mu_s  \in \{ b \in B \ | \ [hs]b = 0 \} = (1 - e_{(hs)^{-1}}) B$, hence
$$ e_{s^{ -1}} \mu_s \in \bigcap_{ h^{-1} \in G \setminus A} ( 1 - e_{(hs)^{-1}}) B = 0.$$
The last equality comes from the fact that $G \setminus A$ is infinite. 
Then $[s] \mu_s = [s] e_{s^{-1}} \mu_s = [s] \cdot 0 = 0$ and for  $g_0^{-1} \in A$, $$f((A,g_0)) = [g_0] f((A,1)) =  [g_0] \sum_{s \in S} [s] \mu_s = 0.$$ The same argument shows that for $C \in \V_{\Delta}$, $g^{-1} \in C$ we have  $f((C,g)) = 0,$  as  $G \setminus C$ is also infinite.  Therefore, $f$ is the zero map.
Finally $id_{\K \Delta} = \lambda_{\Delta} \circ f = 0$, a contradiction.
\end{proof}

\begin{prop} \label{not-projective} Suppose  that $G$ is an infinite group  and consider the connected component $\Delta$ with unique vertex $G$. Then $\K \Delta$ is not projective as a left $\kpg$-module.
\end{prop}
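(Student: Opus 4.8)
\emph{Plan.} First I would make the objects concrete. The component $\Delta$ with unique vertex $G$ consists of the single object $G$ together with the loops $(G,g)$, $g\in G$, composed by $(G,g)\circ(G,g')=(G,gg')$; hence $\kd\cong \K G$ as algebras via $(G,g)\mapsto g$, and under this identification the homomorphism $\ld\colon\kpg\to\kd$ of \eqref{lambdaH} becomes the canonical surjection $[g]\mapsto g$. In particular $\ld(e_g)=\ld([g])\ld([g^{-1}])=(G,g)(G,g^{-1})=(G,1)=1_{\K G}$ for every $g\in G$. The left $\kpg$-module structure on $\K G$ from \eqref{KparG-Delta} is $a\cdot y=\ld(a)y$, so $\ld$ is a surjective map of left $\kpg$-modules and $\K G$ is cyclic, generated by $1_{\K G}$ with annihilator $I:=\Ker\ld$. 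The strategy is to assume $\K\Delta$ is projective and extract a contradiction from the infinitude of $G$.

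If $\K\Delta=\K G$ were projective, the surjection $\ld$ would split: there would be a left $\kpg$-module map $s\colon\K G\to\kpg$ with $\ld\circ s=\mathrm{id}$. Put $x=s(1_{\K G})$, so that $\ld(x)=1_{\K G}$. For any $a\in I$ one has $a\cdot 1_{\K G}=\ld(a)=0$, whence $ax=a\,s(1_{\K G})=s(a\cdot 1_{\K G})=0$; thus $Ix=0$. Since $\ld(e_g-1)=1_{\K G}-1_{\K G}=0$, each $e_g-1$ lies in $I$, and therefore
\[
 e_g\,x=x\qquad\text{for all }g\in G.
\]

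Next I would exploit the $G$-grading $\kpg=\oplus_{g}B_g$ (with $B=B_1$ and each $e_g\in B$), noting that left multiplication by $e_g\in B_1$ preserves degrees and that $\ld$ is graded, $\ld(B_h)\subseteq \K h$. Writing $x=\sum_h x_h$ with $x_h\in B_h$, the relation $e_g x=x$ forces $e_g x_1=x_1$ for the identity component $x_1\in B$, while comparing the degree-one parts of $\ld(x)=1_{\K G}$ gives $\ld(x_1)=1_{\K G}\neq 0$, so $x_1\neq 0$. Now I would use the known $\K$-basis of $B$ given by the idempotents $b_F=\prod_{t\in F}e_t$ indexed by the finite subsets $F\subseteq G$ with $1\in F$, on which $e_g\cdot b_F=b_{F\cup\{g\}}$. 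Expanding $x_1=\sum_F c_F b_F$ as a finite combination, let $T$ be the (finite) union of the subsets $F$ occurring, and choose $g_0\in G\setminus(T\cup\{1\})$, which exists because $G$ is infinite. Then $e_{g_0}x_1=\sum_F c_F b_{F\cup\{g_0\}}$, and every basis element on the left-hand side contains $g_0$ whereas every $b_F$ in $x_1$ avoids it; since these are distinct basis vectors, the equality $e_{g_0}x_1=x_1$ forces all $c_F=0$, i.e. $x_1=0$, contradicting $x_1\neq 0$. Hence no section $s$ exists and $\K\Delta$ is not projective.

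The only nonformal input is the structural description of the idempotent subalgebra $B$: that the products $\prod_{t\in F}e_t$, ranging over finite subsets $F\ni 1$, form a $\K$-basis of $B$ with multiplication governed by union of subsets. This is standard (it reflects the semilattice $E(S(G))$ of Exel's semigroup $S(G)$ and the structure of $\K_{par}G$ studied in \cite{DEP}), and it is exactly the place where the argument sees that $G$ is infinite; everything else is graded bookkeeping. I expect this basis/linear-independence step to be the crux, since it is what rules out an ``idempotent at infinity'' $\prod_{g\in G}e_g$ that a splitting would require.
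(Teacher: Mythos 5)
Your proof is correct, and it reaches the contradiction by a genuinely shorter route than the paper's. The paper also starts from a hypothetical splitting $\theta$ of $\ld$, but it analyzes $\theta(g)$ for \emph{every} $g\in G$: writing $\theta(g)=(1+w_{g,g})[g]+\sum_{g_1\neq g}w_{g,g_1}[g_1]$ with $w_{g,g_1}\in\Omega=\Ker(\ld)\cap B$ and using $[t]\theta(g)=\theta(tg)$, it deduces that the nonzero element $1+w_{tg,tg}$ lies in $\bigcap_{t\in G}\bigl(Be_t+B(1-e_{tg})\bigr)$ for a fixed $g\neq 1$, and then proves by a separate combinatorial claim that this intersection vanishes when $G$ is infinite. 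You instead evaluate the splitting only at $1_{\K G}$, observe that $x=s(1_{\K G})$ is annihilated by $\Ker\ld$ and hence satisfies $e_gx=x$ for all $g$, and project onto the identity component of the $G$-grading to obtain a nonzero $x_1\in B$ with $x_1\in\bigcap_{g\in G}e_gB$; the vanishing of this intersection for infinite $G$ is immediate from the $\K$-basis of $B$ by finite products of the $e_t$. Both arguments rest on the same nonformal input --- the canonical basis of $B$ coming from Exel's semigroup $S(G)$, which the paper invokes at exactly the analogous point --- and both locate the obstruction in the nonexistence of an ``idempotent at infinity'' $\prod_{g\in G}e_g$; but your reduction to $\bigcap_{g\in G} e_gB=0$ avoids the paper's more delicate intersection $\bigcap_{t\in G}\bigl(Be_t+B(1-e_{tg})\bigr)=0$ and the attendant bookkeeping with the coefficients $w_{g,g_1}$.
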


\begin{proof}
	
	\medskip First note that $\K\Delta \simeq M_{1} (\K G) \simeq \K G$; henceforth we will identify the algebras $\K\Delta$ and $\K G$.
	Suppose  that $\K G$ is projective as a left $\kpg$-module.  Then there is a homomorphism of left $\kpg$-modules $$\theta : \K G \to \kpg \hbox{ such that  }\lambda_{\Delta} \circ \theta = id_{\K G}.$$ Note that
	$$\lambda_{\Delta}([g]) = g \hbox{ and } \lambda_{\Delta}(e_g) = 1.$$
	Then
	$$
	\theta(g) = ( 1 + w_{g,g})[g] + \sum_{g_1 \in G \setminus \{ g \}} w_{g, g_1} [g_1],
	$$
	where each $w_{g, g_1} \in \Omega = \Ker (\lambda_{\Delta}) \cap B$. Then since $\theta$ is a homomorphism of left $\kpg$-modules
	 $$ [t] \theta(g) = \theta(tg) \hbox{ for } t,g \in G,$$
	hence
	\begin{equation} \label{calculation123}
	[t]( 1 + w_{g,g})[g] + \sum_{g_1 \in G \setminus \{ g \}} [t] w_{g, g_1} [g_1] = ( 1 + w_{tg,tg})[tg] + \sum_{g_2 \in G \setminus \{ tg \}} w_{tg, g_2} [g_2].\end{equation}
	This together with the rules of multiplication in $\kpg = \oplus_{g \in G} B[g]$ : 
	$$
	[g] e_h = e_{gh} [g] \hbox{ and }[g][h] = e_g [gh] \hbox{ for } g,h \in G,
	$$
	imply that 
	$$	[t]( 1 + w_{g,g})[g] = ( 1 + w_{tg,tg})[tg] \hbox{ and }  [t] w_{g, g_1} [g_1] = w_{tg, tg_1} [t g_1]  \hbox{ for } g,t \in G, g_1 \in G \setminus \{ g \}.
	$$
	Note that
	$[t](1 + w_{g,g}) \in [t] B = B[t]$, hence 
	$ ( 1 + w_{tg,tg})[tg]  = 	[t]( 1 + w_{g,g})[g] \in B [t][g] = B e_t [tg]$.  Therefore, 
$$  1 + w_{tg,tg} \in  B e_t + B(1-e_{tg})  \hbox{ for every } t,g \in G.$$
	Fix $tg = g_0$, thus 
	\begin{equation} \label{uni1}
	 1 + w_{g_0,g_0} \in \cap_{h \in G} (B e_{h}  + B(1-e_{g_0})). \end{equation}
	\begin{claim} \label{claim.null.intersection}
	 If $G$ is infinite we have $\cap_{h \in G} (B e_h + B(1-e_{g_0}))= B( 1 - e_{g_0})$.
	\end{claim}
	
	\medskip Note that the Claim together with (\ref{uni1})  implies that $ 1 + w_{g_0, g_0} \in B ( 1 - e_{g_0}) \subseteq \Omega$, that together with  	 $w_{g_0,g_0} \in \Omega$ gives $ 1 \in \Omega$, a contradiction.
	 
	 \medskip{\bf Proof of Claim \ref{claim.null.intersection}}
	 Note that $\K_{par}G$ is isomorphic to $\K S(G)$, where $S(G)$ is Exel's inverse semigroup. Every element of $S(G)$ has a canonical form $e_{g_1} \ldots e_{g_s} [g]$, that is unique up to permutation of the factors  $e_{g_1}, \ldots, e_{g_s}$. Thus the elements of $B$  are  $\K$-linear combinations of  $e_{g_1} \ldots e_{g_s}$ and $e_1 = 1$, where $g_1, \ldots, g_s \in G \setminus \{ 1 \}$ are pairwise distinct, and  this decomposition is unique (up to permutation of the factors in each product). Thus one such $\K$-linear combination is in   $Be_h  e_{g_0}$ for $h \not= g_0$ precisely when two of  $e_{g_1}, \ldots, e_{g_s}$ are $e_{g_0}$ and $e_h$  for every summand of the $\K$-linear combination. This together with the fact that $G$ is infinite implies that  $\cap_{h \in G} B e_h e_{g_0} = 0$.
	 
	 Suppose that $\lambda \in \cap_{h \in G} (B e_h+ B(1-e_{g_0}))$. Then
	 	$\lambda e_{g_0} \in \cap_{h \in G} B e_h e_{g_0} = 0$, hence $\lambda \in B( 1 - e_{g_0})$.
\end{proof}

 \subsection{Flat $\kpg$-modules} \label{flat-section}

\begin{lemma} \label{lemma-flat}
	Let $T$ be a subset of $G \setminus \{ 1 \}	
	$ and let $B_0$ be the unital $\K$-subalgebra of $B$ generated by $\{ e_g ~| ~ g \in T \}$. 
	
	a) Consider $\K$ as $B_0$-module, where each $e_g$ acts on $\K$ as 1. Then $\K$ is flat as $B_0$-module.
	
	b) Consider $\K$ as $B_0$-module, where each $e_g$ acts on $\K$ as 0. Then $\K$ is flat as $B_0$-module.
\end{lemma}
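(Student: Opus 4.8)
The plan is to treat both parts uniformly by exhibiting $\K$ as a quotient of $B_0$ by an ideal generated by commuting idempotents, and then recognizing that quotient as a directed colimit of projective $B_0$-modules. For $g \in T$ put $f_g = 1 - e_g$ in case (a) and $f_g = e_g$ in case (b); recall that each $e_g = [g][g^{-1}]$ is an idempotent and that the $e_g$ (hence the $f_g$) commute pairwise, so that in both cases $\{f_g \mid g \in T\}$ is a family of pairwise commuting idempotents of $B_0$. The first step is to observe that the prescribed $B_0$-action on $\K$ factors through the $\K$-algebra map $B_0 \to \K$ sending $e_g \mapsto 1$ in case (a) and $e_g \mapsto 0$ in case (b); in either case this is the canonical projection $B_0 \to B_0/\mathfrak m$, where $\mathfrak m = \sum_{g \in T} B_0 f_g$, and since $B_0$ is generated as a $\K$-algebra by the $e_g$ one checks at once that $B_0/\mathfrak m \simeq \K$. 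Thus in both cases the $B_0$-module $\K$ is isomorphic to $B_0/\mathfrak m$.

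Next I would write $\mathfrak m$ as the directed union of its finitely generated subideals: for a finite subset $F \subseteq T$ set $\mathfrak m_F = \sum_{g \in F} B_0 f_g$, so that $\mathfrak m = \bigcup_F \mathfrak m_F$ as $F$ ranges over the finite subsets of $T$ ordered by inclusion. Passing to quotients gives
$$
\K \simeq B_0/\mathfrak m \simeq \varinjlim_F \, B_0/\mathfrak m_F ,
$$
a directed colimit of $B_0$-modules with the canonical surjections as transition maps. The reason for introducing the idempotents $f_g$ is that Lemma~\ref{idempotents}, applied to the commuting idempotents $\{f_g \mid g \in F\}$ of the ring $B_0$, tells us that each $B_0/\mathfrak m_F$ is a \emph{projective} $B_0$-module.

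Finally, since projective modules are flat and $Tor$ commutes with directed colimits (in the second variable), I would conclude that $Tor_n^{B_0}(N, \K) \simeq \varinjlim_F Tor_n^{B_0}(N, B_0/\mathfrak m_F) = 0$ for every $B_0$-module $N$ and every $n \geq 1$, whence $\K$ is flat over $B_0$. Note that this argument never uses the explicit canonical form or $\K$-basis of $B$; only the idempotency and commutativity of the generators enter. The sole non-formal ingredient is Lemma~\ref{idempotents}, which converts the finitely generated pieces $\mathfrak m_F$ into projective quotients, and I expect no genuine obstacle beyond verifying that the two sign conventions for $f_g$ both fall inside the framework of commuting idempotents, which they do because $1 - e_g$ is idempotent precisely when $e_g$ is.
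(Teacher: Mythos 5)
Your proof is correct, but it takes a genuinely different and considerably more economical route than the paper's. The paper first reduces (b) to (a) via the automorphism $e_g \mapsto 1-e_g$, then proves that the augmentation ideal $\Omega = \sum_{g\in T} B_0(1-e_g)$ is flat by writing it as a directed union of the finitely generated pieces $M_{\{g_1,\dots,g_k\}}$, each of which is a direct summand of $B_0$. Dimension shifting along $0 \to \Omega \to B_0 \to \K \to 0$ then kills $Tor_i^{B_0}(N,\K)$ only for $i \geq 2$, and the bulk of the paper's proof is devoted to the remaining case $Tor_1^{B_0}(N,\K)=0$: a reduction to finitely generated and then cyclic modules $N = B_0/I$ via a $3\times 3$-type diagram chase, followed by a case analysis on whether $I \subseteq \Omega$, using the specific relation $b\,e_{g_1}\cdots e_{g_k}=0$ to show $I = I\Omega$. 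You sidestep all of this by taking the directed colimit of the \emph{quotients} $B_0/\mathfrak{m}_F$ rather than of the submodules $\mathfrak{m}_F$: since each $B_0/\mathfrak{m}_F$ is projective by Lemma~\ref{idempotents} and $\K \simeq B_0/\mathfrak{m} \simeq \varinjlim_F B_0/\mathfrak{m}_F$, flatness follows at once from the fact that a directed colimit of flat modules is flat (the same \cite[Prop.~5.34]{Rotman} the paper already invokes), with no separate treatment of $Tor_1$ and with cases (a) and (b) handled uniformly. All the ingredients you use are legitimate: the $f_g$ are commuting idempotents, the identification $\K \simeq B_0/\mathfrak{m}$ is forced because $\mathfrak{m}$ lies in the annihilator of the given module structure while $B_0/\mathfrak{m}$ is spanned by the class of $1$, and the colimit of the quotients along the canonical surjections is indeed $B_0/\bigcup_F \mathfrak{m}_F$. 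The only thing your argument gives up relative to the paper's is the auxiliary information extracted along the way (flatness of $\Omega$ itself and the identity $I = I\Omega$ for $I \subseteq \Omega$), which is not needed for the statement of the lemma.
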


\begin{proof}
	Note that there is a  $\K$-automorphism of $B_0$ that sends $e_g$ to $ 1 - e_g$ for each $g \in T$.  Note that in case b) the idempotent $1 - e_g$ acts as 1, thus we can apply case a). 
	
	Now consider case a).  We will show that 
	$$Tor_1^{B_0}(-, \K) = 0,$$
	which is equivalent to the fact that  $\K$ is flat as a left $B_0$-module.\\ 
	Let $g_1, \ldots, g_k$ be pair-wise different elements of $T$. Then by induction on $k$ we have
	\begin{equation} \label{soma-direta}
	B_0 = (\oplus_{0 \leq i \leq k-1} e_{g_1} \ldots e_{g_i} (1 - e_{g_{i+1}})B_0)  \oplus e_{g_1} \ldots e_{g_k} B_0,
	\end{equation}
	where for $i = 0$ the element $e_{g_1} \ldots e_{g_i}$ is 1.
	Set
	 $$M_{ \{g_1, \ldots, g_k \}} = \sum_{1 \leq i \leq k} (1 - e_{g_i}) B_0 \hbox{ and } W_{ \{g_1, \ldots, g_k \}} = \oplus_{0 \leq i \leq k-1} e_{g_1} \ldots e_{g_i} (1 - e_{g_{i+1}})B_0.$$ 	
	We claim that $M_{ \{g_1, \ldots, g_k \}} = W_{ \{g_1, \ldots, g_k \}}$, i.e. \begin{equation} \label{uni2} M_{ \{g_1, \ldots, g_k \}} = \oplus_{0 \leq i \leq k-1} e_{g_1} \ldots e_{g_i} (1 - e_{g_{i+1}})B_0.\end{equation}
	Indeed, $W_{ \{g_1, \ldots, g_k \}} \subseteq M_{ \{g_1, \ldots, g_k \}}$, so by (\ref{soma-direta}),
	$$
	M_{ \{g_1, \ldots, g_k \}} = W_{ \{g_1, \ldots, g_k \}} \oplus ( M_{ \{g_1, \ldots, g_k \}} \cap e_{g_1} \ldots e_{g_k} B_0).$$
	Note that multiplication with $e_{g_1} \ldots e_{g_k}$ acts on $e_{g_1} \ldots e_{g_k} B_0$ as the identity map and acts on $ M_{ \{g_1, \ldots, g_k \}}$ as the zero map, hence $ M_{ \{g_1, \ldots, g_k \}} \cap e_{g_1} \ldots e_{g_k} B_0 = 0$ and (\ref{uni2}) holds.
By (\ref{soma-direta}) and (\ref{uni2}) $M_{ \{g_1, \ldots, g_k \}}$	
	is a $B_0$-module direct summand of $B_0$, 
	hence $M_{ \{g_1, \ldots, g_k \}}$  is a projective $B_0$-module, and thus it is a flat $B_0$-module. 
	
	Let $\Omega$ be the ideal of $B_0$ generated by $\{ e_g - 1 \mid g \in T \}$  i.e. the $B_0$-submodule generated by $\{ e_g - 1 \mid g \in T \}$. Note that $\Omega$ is  the union of 
	the submodules $M_{ \{g_1, \ldots, g_k \}}$, where $\{ g_1, \ldots, g_k \} \subseteq T$. Thus $\Omega$ is the direct limit of $M_{ \{g_1, \ldots, g_k \}}$, where the direct system is defined by the inclusion maps $M_{N_1} \to M_{N_2}$ for finite subsets $N_1 \subseteq N_2$ of $T$. The direct limit of flat $B_0$-modules over a directed index set is a flat $B$-module \cite[Prop.~5.34]{Rotman}, hence 
	$$
	\Omega \hbox{ is a flat }B_0\hbox{-module}.
	$$
In order to prove  that $\K$ is a flat  left $B_0$-module we need  to show that \begin{equation} \label{final2}  Tor_1^{B_0}( N , \K) = 0  \end{equation}
for any right $B_0$-module $N.$  Note that $N$ is the direct limit  (in this case the union)  of its finitely generated $B_0$-submodules  and the functor $Tor_i^{B_0}( ~, \K)$ commutes with direct limits for every $i \geq 1$ since tensor product commutes with direct limits (see \cite[Prop. 7.8]{Rotman}). Hence it suffices to prove (\ref{final2}) with  $N$ being a finitely generated $B_0$-module.

Consider the short exact sequence $$0 \to \Omega \to B_0 \to \K \to 0$$ of $B_0$-modules,
where the map $B_0 \to \K$ is given by $e_g\mapsto 1, g\in G.$  The corresponding long exact sequence in homology for $N$ is
	\begin{equation*} \label{long-exact}
	\ldots  \to  Tor_{1}^{B_0}( N , \Omega) \to   Tor_{1}^{B_0}( N , {B_0})  \to Tor_1^{B_0}( N , \K) \to
	N \otimes_{B_0} \Omega \to N \otimes_{B_0} B_0 \to N \otimes_{B_0} \K \to 0.
	\end{equation*}  
	 Note that $B_0$ is a free $B_0$-module, hence it is a projective $B_0$-module and $Tor_1^{B_0}( N , B_0) = 0.$  Consequently, we obtain the exact sequence
	\begin{equation*} \label{long-exact2}  0 \to  Tor_1^{B_0}( N , \K) \to
	N \otimes_{B_0} \Omega \to N \otimes_{B_0} B_0 \to N \otimes_{B_0} \K \to 0,
	\end{equation*} and thus (\ref{final2}) is equivalent to 
	\begin{equation} \label{final} N \otimes_{B_0} \Omega \to N \otimes_{B_0} B_0 \simeq N \hbox{ is an injective map} \end{equation}  for  any finitely generated $B_0$-module $N$.

	\begin{claim} \label{claim.induction.(6.9)}
 If (\ref{final}) holds for $B_0$-modules $N_1$ and $N_2$ and $0 \to N_1 \to N \to N_2 \to 0$ is a short exact sequence of $B_0$-modules, then (\ref{final}) holds for $N$.
	\end{claim}
	
	\medskip
 Indeed, the proof is a diagram chasing similar to the proof of  
	 the 3x3-Lemma (see \cite[Exer.~2.32]{Rotman}). Consider the commutative diagram
	 \begin{equation*}
	 \begin{matrix}
	 N_1 \otimes_{B_0} \Omega  & \mapnew{\alpha_1} & N_1 \otimes_{B_0} B_0 & \mapnew{} &N_1 \otimes_{B_0} \K & \mapnew{} & 0 \\
	 \hspace{9pt} \downarrow{\beta} & & \hspace{9pt}  \downarrow{\delta}&&\downarrow&& \\
	 N \otimes_{B_0} \Omega  & \mapnew{\alpha} & N \otimes_{B_0} B_0 & \mapnew{} &N \otimes_{B_0} \K & \mapnew{} & 0 \\
	 \hspace{9pt} \downarrow{\gamma} & & \downarrow&&\downarrow&& \\
	 N_2 \otimes_{B_0} \Omega  & \mapnew{\alpha_2} & N_2 \otimes_{B_0} B_0 & \mapnew{} &N_2 \otimes_{B_0} \K & \mapnew{} & 0 \\
	 \downarrow & &\downarrow&&\downarrow&& \\
	 0 & &0&&0&& \\
	 \end{matrix}
	 \end{equation*}
Observe that  $Im (\beta) = \Ker (\gamma),$ as the $B_0$-module  $\Omega $ is flat.  Then since $\alpha_1, \alpha_2$ and $\delta$ are injective, a simple diagram chasing implies that $\alpha$ is injective. This completes the proof of the Claim.
 
 \medskip
Using induction on the number of generators, Claim \ref{claim.induction.(6.9)} implies that (\ref{final}) holds for every finitely generated $B_0$-module precisely when  (\ref{final}) holds 
 for  every  cyclic $B_0$-module $N = B_0/I$. Consider the short exact sequence $0 \to I \to B_0 \to B_0/ I \to 0$ of $B_0$-modules and the corresponding long exact sequence in homology
\begin{equation*} 
	\ldots \to 0 = Tor_1^{B_0}( B_0 , \K) \to  Tor_1^{B_0}( {B_0}/I , \K) \to  I \otimes_{B_0} \K \to \end{equation*} \begin{equation}  \label{exact-f} {B_0} \otimes_{B_0} \K \Vightarrow{\partial} ({B_0}/I) \otimes_{B_0} \K   \to 0,
	\end{equation}
	 where we have used that $B_0$ is a free $B_0$-module, hence it is a  projective $B_0$-module and thus $ Tor_1^{B_0}( B_0 , \K) = 0$. Since $\partial$ is an epimorphism, and ${B_0} \otimes_{B_0} \K$ is a $\K$-vector space of dimension $1,$ we have the following two  cases.
	
	a) If $(B_0/I) \otimes_{B_0} \K  \simeq \K$ then  since $ {B_0} \otimes_{B_0} \K \simeq \K$ and $\partial$ is an epimorphism we can deduce that $\partial$ is an isomorphism.  
	Furthermore, for every right $B_0$-module $M$ we have $M \otimes_{B_0} \K \simeq M \otimes_{B_0} (B_0/\Omega) \simeq M / M \Omega $, hence 
	$$\K \simeq (B_0/I) \otimes_{B_0} \K \simeq  (B_0/I) \otimes_{B_0} (B_0/\Omega) \simeq B_0/ (I + \Omega),$$ which implies  $B_0 \not= I + \Omega$ that, together with the fact that $\Omega$ is a maximal ideal (i.e. $B_0$-submodule of $B_0$ with $B_0 / \Omega \simeq \K$ ),     implies  $I \subseteq \Omega$. Note that the long exact sequence (\ref{exact-f}) together with the fact that $\partial$ is injective imply $$Tor_1^{B_0}( {B_0}/I , \K) \simeq  I \otimes_{B_0} \K \simeq I \otimes_{B_0} B_0 / \Omega \simeq I/ I \Omega.$$ This last quotient is the zero module; in fact, if $b \in I \subseteq \Omega$, then $b \in \sum_{1 \leq i \leq k} ( 1 - e_{g_i})B_0$ for some $g_1, \ldots, g_k \in T$. Hence
	$b e_{g_1} \ldots e_{g_k} = 0$ and $b = b ( 1 - e_{g_1} \ldots e_{g_k} ) \in b \Omega \subseteq I \Omega$, that is,  $I \subseteq I \Omega$ and so $$Tor_1^{B_0}( {B_0}/I , \K) \simeq I / I \Omega = 0.$$

	b)
	If $({B_0}/I) \otimes_{B_0} \K  = 0$ then,  since, as above,  $({B_0}/I) \otimes_{B_0} \K \simeq B_0 / (I + \Omega)$, we deduce that $I \not\subseteq \Omega$ and by the long exact sequence (\ref{exact-f}) there is a short exact sequence $$0 \to Tor_1^{B_0}( {B_0}/I , \K)  \to I \otimes_{B_0} \K \to \K \to 0.$$ Since $I \otimes_{B_0} \K \simeq I / I \Omega$ we obtain
	\begin{equation} \label{final-t2} dim_{\K}(I / I \Omega) = dim_{\K}(I \otimes_{B_0} \K) = 1 + \dim_{\K} Tor_1^{B_0}( {B_0}/I , \K). \end{equation}
	On the other hand,  
 $I / (I \cap \Omega) \simeq 
	(I + \Omega ) /  \Omega = {B_0}/ \Omega \simeq \K$, hence
	$$\dim_{\K} I / (I \cap \Omega) = 1.$$ 
Notice that $I \cap \Omega = I  \Omega .$ 	Indeed,
as in a) an element $b \in I \cap \Omega $ can be written as  
$b = b ( 1 - e_{g_1} \ldots e_{g_k} )\in I  \Omega ,$ so that 
$I \cap \Omega \subseteq I  \Omega ,$ and thus $I \cap \Omega = I  \Omega .$ It follows that $\dim_{\K} I / I \Omega $ =1 and 
by (\ref{final-t2}) $$Tor_1^{B_0}( {B_0}/I , \K) = 0.$$ 
\end{proof}
\begin{theorem} \label{final-flat}
	Suppose that $\Delta$ is a connected component of the groupoid $\Gamma(G)$ with finitely many vertices. Then $\K \Delta$ is flat as a left $\kpg$-module.
\end{theorem}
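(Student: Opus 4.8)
The plan is to realise $\kd$ as a module induced from the commutative subalgebra $B\subseteq\kpg$ along a one–dimensional character, and then to combine flat base change with Lemma~\ref{lemma-flat}. I first reduce to the elementary module $W$. Recall from \eqref{isomorphism.kdelta.matrices} the algebra isomorphism $\eta\colon\kd\to M_n(\K H)$, where $n=|\VD|$ and $H$ is the stabilizer of the fixed vertex $A$. As a left module over itself, $M_n(\K H)$ splits into its columns, $M_n(\K H)=\bigoplus_{j=1}^n M_n(\K H)E_{jj}$, each column being isomorphic to $W=M_{n,1}(\K H)$ and being a left ideal, hence a $\kpg$-submodule for the action through $\ld$. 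Thus $\kd\cong W^n$ as left $\kpg$-modules, and it suffices to prove that $W$ is flat over $\kpg$.

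The key step is a structural identification of $W$ as an induced module. Let $\K_A$ be the one–dimensional $B$-module on which $e_g$ acts as $1$ if $g\in A$ and as $0$ otherwise; this is the character of $B$ attached to the vertex $A$. I would prove that
\[
W\;\cong\;\kpg\otimes_B\K_A \qquad\text{as left }\kpg\text{-modules}.
\]
Using $[g]e_h=e_{gh}[g]$ one checks that $e_k\cdot([g]\otimes1)$ equals $[g]\otimes1$ when $k\in gA$ and $0$ otherwise, so that $[g]\otimes1$ is a $B$-eigenvector for the character attached to $gA$; moreover $[g]e_{g^{-1}}=[g]$ forces $[g]\otimes1=0$ unless $g^{-1}\in A$, and every element of $\kpg\otimes_B\K_A$ is a scalar multiple of some $[g]\otimes1$. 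Computing $(\kpg\otimes_B\K_A)_g=B_g\otimes_B\K_A$ degreewise for the grading $\kpg=\oplus_g B_g$, together with the isomorphism of right $B$-modules $B_g\cong e_{g^{-1}}B$, shows this graded piece is $\K$ when $g^{-1}\in A$ and $0$ otherwise; since $A^{-1}=\bigsqcup_{i=1}^n g_iH$, the nonzero vectors $[g_ih]\otimes1$ (for $1\le i\le n$ and $h\in H$) form a $\K$-basis. Sending $e_ih\mapsto[g_ih]\otimes1$ and using $[s][g_i]=e_s[sg_i]$, one verifies that $[s]$ acts by the monomial matrix $M_s$ on both sides, which gives the desired $\kpg$-linear isomorphism.

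Flatness is then formal. By Exel's canonical form, $B$ is the free commutative $\K$-algebra on the idempotents $\{e_g\mid g\neq1\}$, so partitioning $G\setminus\{1\}=(A\setminus\{1\})\sqcup(G\setminus A)$ yields $B\cong B_A\otimes_\K B_A'$ with $B_A=\K[e_g\mid g\in A\setminus\{1\}]$ and $B_A'=\K[e_g\mid g\notin A]$, under which $\K_A$ is the external tensor product of the character $e_g\mapsto1$ on $B_A$ with the character $e_g\mapsto0$ on $B_A'$. By Lemma~\ref{lemma-flat}(a) the first factor is flat over $B_A$ and by Lemma~\ref{lemma-flat}(b) the second is flat over $B_A'$; since over a field the external tensor product of flat modules is flat over the tensor-product algebra (for $B$ commutative this follows because a tensor product of two flat $B$-modules is flat), $\K_A$ is flat over $B$. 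Extension of scalars preserves flatness, so $\kpg\otimes_B\K_A\cong W$ is flat over $\kpg$, and therefore $\kd\cong W^n$ is flat as well.

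The main obstacle is the structural identification $W\cong\kpg\otimes_B\K_A$: one must verify, through the fibrewise computation over the $G$-grading, both that the induced module has exactly the right dimension in each degree and that its $\kpg$-action reproduces the monomial representation $s\mapsto M_s$. Once this is in hand, the passage to flatness is a purely formal consequence of Lemma~\ref{lemma-flat} together with flat base change along $B\hookrightarrow\kpg$, and no further hypothesis on the (possibly infinite) vertices of $\Delta$ is needed.
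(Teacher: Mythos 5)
Your proof is correct, but it takes a genuinely different route from the paper's. The paper proves flatness of $\kd$ by a four-step homological argument: it first shows that $\Ker(\ld)=\kpg\,(\Ker(\ld)\cap B)$, then that $\ld(B)\simeq K_1\oplus\cdots\oplus K_n$ is flat as a $B$-module (where $K_i$ is precisely your one-dimensional character of $B$ attached to the vertex $g_iA$, and where the same tensor factorization $B\simeq B_i\otimes_{\K}\widehat B_i$ together with Lemma~\ref{lemma-flat} is the engine), then that $\Ker(\ld)$ is flat over $\kpg$, and finally it assembles these facts via long exact sequences of $Tor$. You instead identify $\kd\simeq W^n$ and prove the structural isomorphism $W\simeq\kpg\otimes_B\K_A$, after which flatness is immediate from flat base change along $B\hookrightarrow\kpg$. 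Both arguments ultimately rest on the same combinatorial input (Exel's canonical form giving $B\simeq B_A\otimes_\K B_A'$, plus Lemma~\ref{lemma-flat}), but your induced-module description of $W$ short-circuits the analysis of $\Ker(\ld)$ and the $Tor$ bookkeeping, so it is shorter and more conceptual; the paper's route has the side benefit that the description of $\Ker(\ld)$ obtained in its Step~1 is reused elsewhere (e.g.\ to show that $\K G$ is not finitely presented over $\kpg$). I checked your key identification: the graded pieces $B_g\otimes_B\K_A\simeq e_{g^{-1}}\K_A$ do give a basis $\{[g_ih]\otimes 1\}$ indexed by $A^{-1}=\bigsqcup_i g_iH$, and since $(sg_ih)^{-1}s=h^{-1}g_i^{-1}\in A$ always, one gets $[s]\cdot([g_ih]\otimes1)=[sg_ih]\otimes1$, which vanishes exactly when $s^{-1}\notin g_iA$ and otherwise equals $[g_j(h_ih)]\otimes1$ with $sg_i=g_jh_i$ --- precisely the action of the monomial matrix $M_s$ on $W$. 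The only cosmetic blemish is your parenthetical justification of flatness of the external tensor product; the clean statement is that $\K_A\simeq(\K_1\otimes_{B_A}B)\otimes_B(B\otimes_{B_A'}\K_0)$, each factor being $B$-flat by base change and the tensor product over the commutative ring $B$ of two flat modules being flat.
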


\begin{proof} As the proof is long we split it in several steps.
	
	\medskip
	{\it Step 1.} In this step we prove  that
	\begin{equation} \label{kernel-1}
	\Ker (\lambda_{\Delta}) = \kpg (\Ker(\lambda_{\Delta}) \cap B) = (\Ker(\lambda_{\Delta}) \cap B) \kpg .
	\end{equation} 
	
	\medskip
	Recall that
	$$
	\lambda_{\Delta}([g]) = \sum_{g^{-1} \in C, C \in \V_{\Delta}} (C,g)
	$$
	and that 
	$\{ (C,1) \}_{ C \in \V_{\Delta} }$ is a basis of 
	$\lambda_{\Delta}(B)$ as $\K$-vector space (by Lemma \ref{Bdelta}). 
	Thus $$\lambda_{\Delta}(B [g]) = V_g,$$ where $V_g$ is the $\K$-subspace of $\K \Delta$ spanned by 
	$\{ (C,g) \}_{C \ni g^{-1}, C \in \V_{\Delta} }$. Note that $$V_g \cap  \sum_{t \in G,\\t\neq g} V_t = 0.$$
	
	Suppose $m = \sum_{g \in G} b_g [g] \in \Ker (\lambda_{\Delta})$, where $b_g \in B$. Then $$0 = \sum_{g \in G} \lambda_{\Delta} (b_g [g]) \in \oplus_{g \in G} V_g,$$ hence $$ \lambda_{\Delta}(b_g) \lambda_{\Delta}([g]) = \lambda_{\Delta}(b_g[g]) = 0 \hbox{ for every }g \in G.$$	
	Then  if $$\lambda_{\Delta}(b_g) = \sum_i k_i (C_i, 1) $$ for some $k_i \in \K \setminus \{ 0 \}$, we have $(C_i, 1) (C,g) = 0$ for every $ C \in \V_{\Delta}$ such that $g^{-1} \in C $, hence $C_i \not= g C$ i.e. $g \not\in C_i$. Recall that $$\lambda_{\Delta}(e_g) = \sum_{g \in C, C \in \V_{\Delta}} (C,1) \ \   \hbox{  and } \  \ 
	\lambda_{\Delta}(1 - e_g) = \sum_{g \not\in C,  C \in \V_{\Delta}} (C,1).$$
	Thus $\lambda_{\Delta}(b_g) \in \lambda_{\Delta} (B(1 - e_g))$, hence
	$$b_g \in B( 1 - e_g) + (B \cap \Ker (\lambda_{\Delta}) )$$ and $$ b_g [g] \in B ( 1 - e_g) [g]  +  (B \cap \Ker (\lambda_{\Delta}) ) [g] = (B \cap \Ker (\lambda_{\Delta}) ) [g], $$
	where we used that $( 1 - e_g) [g] = 0$. Note that
	$$ (B \cap \Ker (\lambda_{\Delta}) ) [g] = [g] (B \cap \Ker (\lambda_{\Delta}) )  \ \hbox{ for } g \in G,$$
	hence (\ref{kernel-1}) holds.
	
\medskip
	{\it Step 2.} As a consequence of Step 1, we shall show that there is a natural isomorphism
	\begin{equation*}
	- \otimes_{\kpg} \Ker (\lambda_{\Delta}) \simeq - \otimes_B (\Ker(\lambda_{\Delta}) \cap B).
	\end{equation*}
	
	\medskip
	 For an arbitrary right $\kpg$-module $M$ we have the well-defined map 
	$$ M \otimes_B ( \Ker(\lambda_{\Delta}) \cap B) \to M \otimes_{\kpg} \Ker(\lambda_{\Delta}), \;\;\; 
	m\otimes _B b \mapsto m \otimes _{\kpg} b,$$ which is clearly natural in $M.$ For the inverse map note that  $\kpg = \oplus _{g\in G} [g]D_{g\m},$ where $D_{g\m}$ is the ideal of $B$ generated by the element $e_{g\m}= [g\m][g].$ It follows by Step 1 that  an  arbitrary element $x \in \Ker (\lambda_{\Delta}) $ can be uniquely  written in the form 
	$$x= \sum _{g\in G} [g]d_{g\m} \in \kpg , \;\;\; (d_{g\m}\in D_{g\m} \cap \Ker (\lambda_{\Delta}) ). $$ 
	Then taking  any $m\in M,$ set
	$$m\otimes _{\kpg} x \mapsto \sum _{g\in G} m[g] \otimes _B d_{g\m}.$$ This is a well-defined map
	$  M \otimes_{\kpg} \Ker(\lambda_{\Delta}) \to M \otimes_B ( \Ker(\lambda_{\Delta}) \cap B) ,$ because for any $h\in G,$ on the one hand  we have
\begin{align*}&(m[h],  [g]d_{g\m}) \mapsto m[h][g]\otimes _B d_{g\m} =
  m[h][g]e_{g\m}\otimes _B d_{g\m} =\\
 &
 m[hg]e_{g\m}\otimes _B d_{g\m} =  
 m[hg]\otimes _B e_{g\m} d_{g\m}=m[hg]\otimes _B  d_{g\m},
 \end{align*} and on the other,
$ [h][g]d_{g\m} =  [h][g]e_{g\m}d_{g\m}  = 
 [hg]e_{g\m} d_{g\m}=  [hg] d_{g\m}= [hg]e_{(hg)\m } d_{g\m}, $ so that
$$(m, [h][g]d_{g\m})   \mapsto
m[hg]\otimes _B e_{(hg)\m } d_{g\m} = m[hg]  e_{(hg)\m }\otimes _B d_{g\m} = m[hg]  \otimes _B d_{g\m}. $$ Clearly, the two maps are inverses to each other.

	\medskip
	{\it Step 3.} We aim to show that $\lambda_{\Delta}(B)$ is flat as $B$-module,  where we consider $\lambda_{\Delta}(B)$ as a left $B$-module via the restriction map $\lambda_{\Delta} |_B$.
	
	\medskip
	 Let $A$ be a vertex of $\Delta$, let $\V_{\Delta} = \{ g_1 A, \ldots, g_n A \}$ (with $g_1 = 1$) and let $H$ be the stabilizer of $A$. 
		Recall from  \eqref{isomorphism.kdelta.matrices} that there is an isomorphism of algebras
		\begin{equation*}
		\eta : \kd \to M_n (\K H ),  \ (g_iA , g) \mapsto  E_{ji}(g\m_j g g_i), 
		\end{equation*}
		where $g g_i A = g_j A$ (and hence $ g\m_j g g_i \in H $). Via this isomorphism  we identify $\lambda_{\Delta}(B)$ with the algebra of diagonal matrices with entries in $\K$. 
	Thus
	\begin{equation} \label{direct-sum-0} \lambda_{\Delta}(B) \simeq K_1 \oplus \ldots \oplus K_n,
	\end{equation} where each $K_i \simeq \K$ as a vector space but we need to specify the $B$-action on $K_i$.
For $C = g_i A$
	 the element $(C,1) \in \K\Delta$ is identified with $E_{ii}(1)$.
	Then $K_i = E_{ii}(\K)$ and $e_g \in B$ acts on $K_i$ via $\lambda_{\Delta}$,  i.e., $\lambda_{\Delta}(e_g)$ as an element of $M_n(\K H)$ is a diagonal matrix with diagonal $(k_1, \ldots, k_n) \in \K^n$ and $e_g$ acts on $K_i$  as multiplication by $k_i$. Note that $k_i = 1$ if $g \in g_i A$ and $k_i = 0$ if $g \not\in g_i A$.

	Let $B_i$ be the $\K$-subalgebra of $B$ generated by $\{ e_g \ | \ g \in g_i A \}$ and consider the epimorphism of $\K$-algebras
	$$\lambda_i : B \to B_i$$ that sends $e_g$ to $e_g$ for $g \in g_i A$ and   sends $e_g$ to 0 if $g \not\in g_i A$.
	Thus $B_i$ acts trivially on $K_i$ (i.e. each $e_g$ for $g \in g_i A$ acts as 1) and the $B$-action on $K_i$ is via the epimorphism $\lambda_i$.
	
	By (\ref{direct-sum-0}) $\lambda_{\Delta}(B)$ is flat as $B$-module if and only if $K_i$ is flat as $B$-module for all $1 \leq i \leq n$.
Thus it remains to show that $- \otimes_B K_i$ is an exact functor for $ 1 \leq i \leq n$. Note that
$$- \otimes_B K_i \simeq ( - \otimes_B B_i)	\otimes_{B_i} \K.$$
It suffices to show that both functors 
$ - \otimes_B B_i$ and $- 	\otimes_{B_i} \K$ are exact.

Note that for the inclusion $j_i : B_i \to B$ we have that $\lambda_i j_i = id_{B_i}$ but $j_i$ is not a homomorphism of $B$-modules, hence we cannot claim that $B_i$ is projective as $B$-module, where the $B$-action is  via $\lambda_i$.

Let $\widehat{B}_i$ be the  unital $\K$-subalgebra of $B$ generated by $\{ e_g ~ | ~ g \in G \setminus g_i A \}$. 
	Then
$$\xi :  B_i \otimes_{\K} \widehat{B}_i \to B , \ x \otimes y \mapsto xy$$
is an isomorphism of algebras and of right $\widehat{B}_i$-modules. 
Considering any right $B$-module also as a right 
$\widehat{B}_i$-module (by restricting the action of $B$ to the subalgebra $\widehat{B}_i$) we can show that 
\begin{equation}\label{eq:naturaliso}
- \otimes_{B} B_i \simeq - \otimes_{\widehat{B}_i} \K,
\end{equation}

where we view $\K$ as $\widehat{B}_i$-module with $e_g$ acting as 0 for $g \in G \setminus g_i A$. 
Indeed, $B \otimes_B B_i \simeq B_i$ and 
\[
B \otimes_{\widehat{B}_i} \K \simeq B_i \otimes_{\K} \widehat{B}_i \otimes_{\widehat{B}_i} \K \simeq B_i \otimes_{\K} \K \simeq B_i \simeq B \otimes  _B B_i. 
\]
Keeping in mind the involved actions on modules, one can directly check that the resulting isomorphism 
$B \otimes_{\widehat{B}_i} \K \to  B \otimes  _B B_i$ maps
 $b \otimes _{\widehat{B}_i} \alpha $ to $b \otimes _{B} \alpha,$
where $\alpha \in \K$.

Since both tensor functors are  right exact and commute with direct sums, using a free presentation of $M$ as a $B$-module,  we conclude that 
 $$
 M \otimes_{\widehat{B}_i} \K \simeq M \otimes_{B} B_i, \;\;\;
  m   \otimes_{\widehat{B}_i} \alpha \mapsto m \otimes_B \alpha , $$
for every right $B$-module $M.$ Clearly  this respects homomorphisms of right $B$-modules (i.e. it is natural in $M$), so that \eqref{eq:naturaliso} follows.

By Lemma \ref{lemma-flat} both functors  $- \otimes_{B_i} \K$ and $- \otimes_{\widehat{B}_i} \K$ are exact,  which completes the proof of  Step 3.

\medskip	
	
	{\it Step 4.} In this step we complete the proof by showing that  $Tor^{\kpg}_1(M, \K \Delta) = 0$ for an arbitrary right ${\kpg}$-module $M.$
	
	\medskip

 Consider the short exact sequence of left $\kpg$-modules $$0 \to \Ker (\lambda_{\Delta}) \to \kpg \  \Vightarrow{\lambda_{\Delta}} \  \K \Delta \to 0$$  and the associated long exact sequence in homology 
$$\ldots \to  
	 Tor^{\kpg}_{1}(M, \Ker(\lambda_{\Delta})) \to Tor^{\kpg}_1(M, \kpg) \to  Tor^{\kpg}_1(M, \K \Delta) \to$$ 
	 $$ M \otimes_{\kpg} \Ker(\lambda_{\Delta}) \to  M \otimes_{\kpg} \kpg \to  M \otimes_{\kpg} \K \Delta \to 0.$$
	 Since $ \kpg$ is a free $ \kpg$-module,  it is projective and, consequently,   $ Tor^{\kpg}_1(M, \kpg) = 0.$ 
Thus we obtain the exact sequence 
$$	 0 \to  Tor^{\kpg}_1(M, \K \Delta) \to M \otimes_{\kpg} \Ker(\lambda_{\Delta}) \to  M \otimes_{\kpg} \kpg \to  M \otimes_{\kpg} \K \Delta \to 0.$$
Hence the equality  $Tor^{\kpg}_1(M, \K \Delta) = 0$  is equivalent to the fact of 
	 $$ M \otimes_{\kpg} \Ker(\lambda_{\Delta}) \to  M \otimes_{\kpg} \kpg \simeq M $$
	 being an injective map.
	  By Step 2 we have
	 
	 \begin{equation}\label{eq:NatIso}
	  M \otimes_{\kpg} \Ker(\lambda_{\Delta}) \simeq M \otimes_B ( \Ker(\lambda_{\Delta}) \cap B).
	  \end{equation}
	 The short exact sequence of $B$-modules $$0 \to  \Ker(\lambda_{\Delta}) \cap B \to B \to \lambda_{\Delta}(B) \to 0$$ gives rise to the associated long exact sequence in homology 
	 $$
	 \ldots \to 0 = Tor^{B}_1(M, B) \to  Tor^{B}_1(M, \lambda_{\Delta}(B)) \to$$ $$ M \otimes_{B} ( \Ker(\lambda_{\Delta}) \cap B)  \to  M \otimes_B B \to  M \otimes_{B} \lambda_{\Delta}(B) \to 0.$$
	 Keeping in mind the proof of Step 2, it is easy to see that the isomorphism \eqref{eq:NatIso} maps the kernel of
	$ M \otimes_{\kpg} \Ker(\lambda_{\Delta}) \to  M \otimes_{\kpg} \kpg$ onto the kernel of 
	$M \otimes_{B} ( \Ker(\lambda_{\Delta}) \cap B)  \to  M \otimes_B B.$  
	 Then
	 $$
	  Tor^{\kpg}_1(M, \K \Delta)  \simeq \Ker \big(  M \otimes_{\kpg} \Ker(\lambda_{\Delta}) \to  M \otimes_{\kpg} \kpg  \big) \simeq $$ $$ \Ker \big(  M \otimes_{B} ( \Ker(\lambda_{\Delta}) \cap B)  \to  M \otimes_B B \big) \simeq  Tor^{B}_1(M, \lambda_{\Delta}(B)) = 0,$$
	 where the last equality comes from the fact that by  Step 3 $\lambda_{\Delta}(B)$ is a flat $B$-module.
\end{proof}

\begin{cor} {\bf (Corollary C)}
	For any group $G$ we have  $cd^{par}_{\K}(G) \geq cd_{\K} (G)$. In particular, if	   ${\rm char}(\K) = p > 0$ and $G$ has $p$-torsion  then  	   $cd^{par}_{\K}(G) = cd_{\K} (G) = \infty$.
\end{cor}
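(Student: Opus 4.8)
The plan is to derive the inequality $cd^{par}_{\K}(G)\ge cd_{\K}(G)$ directly from Theorem~\ref{general2} by applying it to one judiciously chosen connected component. The crucial observation is that $G$ itself, regarded as a subset of $G$ containing $1$, is a vertex of $\Gamma(G)$; since every arrow $(G,g)$ has range $gG=G$, the connected component $\Delta_0$ containing this vertex has the single vertex $G$. In particular $\V_{\Delta_0}$ is finite with $n=1$, so Theorem~\ref{flat-left} and Theorem~\ref{general2} apply even when $G$ is infinite (note that $A=G$ may be infinite, so the projectivity of Corollary~\ref{proj99} is unavailable, but flatness, which is all we need, still holds because the number of vertices is finite). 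For this component the stabilizer of the vertex $G$ is $H=\{h:hG=G\}=G$, hence $\K\Delta_0\simeq M_1(\K G)=\K G$ with $\lambda_{\Delta_0}([g])=g$, and the associated elementary bimodule is $W=\K G$, a $\kpg$-$\K G$-bimodule whose left action is through $\lambda_{\Delta_0}$ and whose right action is the regular action of $\K G$.

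With this component in hand, I would invoke Theorem~\ref{general2}: for every left $\K G$-module $U$ and every $i\ge 0$,
$$
H^i_{par}(G,\,W\otimes_{\K G}U)\;\simeq\;H^i(H,U)\;=\;H^i(G,U).
$$
Now suppose $H^i(G,M)\neq 0$ for some $\K G$-module $M$. Taking $U=M$, the displayed isomorphism gives $H^i_{par}(G,\,W\otimes_{\K G}M)\neq 0$, and since $W\otimes_{\K G}M$ is a left $\kpg$-module, the definition of $cd^{par}_{\K}(G)$ forces $i\le cd^{par}_{\K}(G)$. Taking the supremum over all such $i$ and all $M$ yields $cd_{\K}(G)\le cd^{par}_{\K}(G)$; because the argument is uniform in $i$, it covers the case $cd_{\K}(G)=\infty$ at the same time.

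For the final assertion I would combine the inequality just established with the standard fact that a non-torsion-free group has infinite cohomological dimension over $\K$: if $G\neq 1$ is not torsion-free it contains a nontrivial finite subgroup, and the monotonicity $cd_{\K}(F)\le cd_{\K}(G)$ under passage to subgroups, together with the classical computation of the cohomology of finite groups (see \cite{Brown}), gives $cd_{\K}(G)=\infty$. The inequality then forces $cd^{par}_{\K}(G)=cd_{\K}(G)=\infty$.

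The main point is not any delicate computation but the correct choice of component: recognizing that the whole group $G$ is itself a vertex whose connected component is a singleton, so that it has finitely many vertices and Theorem~\ref{general2} is applicable, and that its stabilizer equals all of $G$, so that partial cohomology with the elementary coefficient modules $W\otimes_{\K G}(-)$ reproduces the ordinary cohomology $H^i(G,-)$ verbatim. Everything after that is a formal comparison of the two suprema defining $cd^{par}_{\K}(G)$ and $cd_{\K}(G)$.
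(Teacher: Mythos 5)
Your argument is correct and coincides with the paper's own second proof of Corollary C: the authors likewise take the connected component $\Delta$ with unique vertex $G$, observe that $H=G$ and $W=\K G$, and apply Theorem~\ref{general2} to transport nonvanishing of $H^{i}(G,U)$ to $H^{i}_{par}(G,W\otimes_{\K G}U)$, concluding with the same citation of the standard fact about non-torsion-free groups. (The paper also records an alternative first proof, tensoring a projective resolution of $B$ with the flat module $\K G$ over $\kpg$, but that is not needed for the inequality you establish.)
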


\begin{proof} We give two proofs.

	1. Suppose  $cd^{par}_{\K}(G)  = m < \infty$.
	Let
	$$ {\mathcal P} : 0 \to P_m \to \ldots \to P_1 \to P_0 \to B \to 0$$
	be a projective resolution of the right $\kpg$-module $B$. By Theorem \ref{final-flat} applied for the connected component $\Delta$ of the groupoid $\Gamma(G)$ with unique vertex $G,$ i.e. $\K \Delta =  \K G$, we have that $ - \otimes_{\kpg} \K G$ is an exact functor. Note that $B \otimes_{\kpg} \K G \simeq   B \otimes_{\kpg} W \simeq \K $, where $W$ is defined before Theorem~\ref{decomposition}  where $G$ acts trivially on $\K$.
The latter isomorphism  is a particular case of Proposition \ref{trivial}.   
Then  ${\mathcal Q} = {\mathcal P} \otimes_{\kpg} \K G$ is an exact complex and since each $P_i$ is projective as a $\kpg$-module, we have that $P_i \otimes_{\kpg} \K G$ is a projective $\K G$-module. Thus ${\mathcal Q}$ is a projective resolution of
 of length $m$ of the trivial right $\K G$-module  $B \otimes_{\kpg} \K G \simeq \K$,   i.e. $cd_{\K}(G) \leq m$.
	
	2. Suppose
	\begin{align*}
n & = cd_{\K} (G) \\&  = max \{ n_0  ~ | ~ \hbox{ there is a }\K G-\hbox{module }U \hbox{ such that } H^{ n_0}(G, U) \not= 0\} \leq \infty .
	\end{align*}
	Consider the connected component $\Delta$ of the groupoid $\Gamma(G)$ with unique vertex $G$. Then by Theorem \ref{general2}  applied to  $W = \K H = \K G$,
	$$
	H_{par}^{ n_0}(G, U) \simeq H^{ n_0}(G, U) \not= 0,
	$$
	hence $cd_{\K}^{par}(G) \geq n_0$ and so $cd_{\K}^{par}(G) \geq n$.

Finally, if   ${\rm char}(\K) = p > 0$ and $G$ has $p$-torsion, let $P$ be a cyclic subgroup of order $p$. Then $cd_{\K}(G) \geq cd_{\K}(P)$. Note that there is a free resolution $  \ldots \to F_i \to F_{i-1} \to \ldots \to F_0 \to \K \to 0$ such that each $F_i = \K P$ and the differentials $\partial_i : F_i \to F_{i-1}$ for $i \geq 1$ alternate between the  two maps  $D$ and $N$ ,  defined by $D(\lambda) = \lambda ( x-1)$ and $N(\lambda) = \lambda \sum_{0 \leq i \leq p-1} x^i,$ where $x$ is a fixed generator of $P$. Then using this  resolution to compute the (usual) cohomology of $P$ with coefficients in the trivial $\K G$-module $\K ,$ we readily obtain that  $H^i(P, \K) = \K$ for all $i\geq 0,$ and hence $cd_{\K}(P) = \infty$. 

\end{proof}

\begin{cor}Let $G$ be an infinite group. Then $\K G$ is finitely generated (actually cyclic) but not finitely presented as a $\kpg$-module via $\lambda_{\Delta}$, where $\Delta$ is the connected component of the groupoid $\Gamma(G)$ with unique vertex $G$.	
	\end{cor}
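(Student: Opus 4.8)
The plan is to treat the two assertions separately, deducing each from results already proved. Since $\Delta$ has the single vertex $G$, we have $\K\Delta \simeq \K G$ with $\lambda_\Delta([g]) = g$ and $\lambda_\Delta(e_g) = 1$, and $1_{\K\Delta}$ corresponds to $1_{\K G}$. By Proposition 2.2 of \cite{DdLP} the map $\lambda_\Delta : \kpg \to \K\Delta$ is a surjective homomorphism of $\K$-algebras, hence a surjective homomorphism of left $\kpg$-modules for the action \eqref{KparG-Delta}. Because that action is $[g]\cdot a = \lambda_\Delta(g)\,a$, we get $\kpg\cdot 1_{\K\Delta} = \lambda_\Delta(\kpg) = \K\Delta$, so $\K G$ is generated as a left $\kpg$-module by the single element $1_{\K G}$; in particular it is finitely generated (indeed cyclic).

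For the second assertion I would argue by contradiction, playing flatness against projectivity. The tool is the classical fact that a finitely presented flat module over an arbitrary ring is projective. By Theorem \ref{final-flat}, $\K G = \K\Delta$ is flat as a left $\kpg$-module. Hence, if $\K G$ were finitely presented, it would be projective; but Proposition \ref{not-projective} asserts precisely that $\K G$ is \emph{not} projective as a left $\kpg$-module when $G$ is infinite. This contradiction shows that $\K G$ is not finitely presented. Concretely, a finite presentation would come from the short exact sequence of left $\kpg$-modules
\[
0 \to \Ker(\lambda_\Delta) \to \kpg \ \Vightarrow{\lambda_\Delta}\ \K G \to 0,
\]
in which $\kpg$ is free of rank one; by Schanuel's lemma, $\K G$ is finitely presented if and only if $\Ker(\lambda_\Delta)$ is a finitely generated left $\kpg$-module.

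I expect the only substantive point to be the invocation of the theorem that finitely presented flat modules are projective; the rest is formal and reuses exactly the two preceding results. Should one prefer a self-contained route, I would instead show directly that $\Ker(\lambda_\Delta)$ is not finitely generated. Here I would use the description $\Ker(\lambda_\Delta) = \kpg\,(\Ker(\lambda_\Delta)\cap B)$ from \eqref{kernel-1} together with the structure of $\Omega = \Ker(\lambda_\Delta)\cap B$, the augmentation ideal of $B$ generated by $\{\,1 - e_g : g \in G\setminus\{1\}\,\}$: any finite generating set of $\Ker(\lambda_\Delta)$ would involve only finitely many of the idempotents $e_g$, and, arguing with the canonical forms of elements of $B$ exactly as in the proof of Proposition \ref{not-projective}, one would reach a contradiction with the infinitude of $G$. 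I would present the flatness-versus-projectivity argument as the main proof, since it is short and reuses exactly the two results just established (Theorem \ref{final-flat} and Proposition \ref{not-projective}).
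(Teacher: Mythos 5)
Your main argument is exactly the paper's first proof: it invokes Theorem \ref{final-flat} (flatness), Proposition \ref{not-projective} (non-projectivity), and the fact that a finitely presented flat module is projective (\cite[Thm.~3.56]{Rotman}), together with the observation that $\lambda_\Delta$ is surjective so $\K G$ is cyclic. The alternative route you sketch, reducing to the non-finite-generation of $\Ker(\lambda_\Delta)$ and ultimately of $\Omega$, is essentially the paper's second (ring-theoretic) proof, so the proposal is correct and in line with the paper.
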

\begin{proof} We give two proofs.
	
	1. The first proof is homological.
	By Proposition \ref{not-projective}, $\K \Delta = \K G$ is not projective as a $\kpg$-module (via $\lambda_{\Delta}$) and by Theorem \ref{final-flat}, $\K \Delta$ is flat as a $\kpg$-module.
	By \cite[Thm.~3.56]{Rotman} a finitely presented flat module is projective, hence $\K G$ is not finitely presented as $\kpg$-module.
	
	2. The second proof is ring theoretic and uses the proof of  Theorem \ref{final-flat}. By Step 1 of the proof of  Theorem \ref{final-flat}, $\Ker(\lambda_{\Delta})$ is finitely generated as $\kpg$-module if and only if $\Ker(\lambda_{\Delta}) \cap B$ is finitely generated as $B$-module. This is equivalent to $\lambda_{\Delta}(B)$ being finitely presented as $B$-module (via $\lambda_{\Delta}$). If this is the case then by (\ref{direct-sum-0})   from the proof  of Step 3 of Theorem \ref{final-flat},  $K_1$ is finitely presented as $B$-module. Note that in our case, using the notation of Step 3 of Theorem \ref{final-flat}, $n = 1$, $A =G$ and $B = B_1$, hence $K_1 = \K$ is the trivial $B$-module i.e. each $e_g$ acts as 1. Then $\Omega$ the ideal of $B$ generated by by all elements $e_g - 1,$  $g \in G,$ is finitely generated as a $B$-module, which contradicts the fact of  $G$ being infinite.
\end{proof}

\section{Auxiliary facts on idempotents and cancellation}

In this section $R$ is a unital associative ring. 
\begin{rmk} \label{cor-cancel}
	 Suppose that $r_1 e_1 + \ldots + r_k e_k = 0$, where $e_1, \ldots, e_k$ are idempotents that commute with each other. Then for each $i =1,2,\ldots, k$ we have that 
	$$r_i \in R e_1 + \cdots Re_{i-1} + Re_{i+1}+ \cdots + R e_k + R ( 1 - e_i).$$
	In fact, 
	$$ r_i = r_i e_i + r_i (1 - e_i) =  \sum_{j \neq i } -  r_j e_j + r_i (1 - e_i), \ \  1 \leq i \leq k.  $$	
	These equalities may be arranged in a matrix form: if  $v_1 $ is the column vector  $(e_1, \ldots, e_k)^t$ and $b$ is the column vector $(r_1( 1 - e_1), \ldots, r_k ( 1 - e_k))^t$, then 
	\begin{equation}\label{equation.symmetric.matrix}
	(r_1, \ldots, r_k)^t = N v_1 + b,
	\end{equation}
	where $N = (n_{i,j})$ is the $k \times k$ matrix which has entries $n_{i,j} = - r_j$ if $i \neq j$ and $n_{i,i} = 0$. 
\end{rmk}  The next result shows that there is an equality analogous to \eqref{equation.symmetric.matrix} involving  a skew-symmetric matrix $M$. 
 It  will be needed in the next section for studying the left augmentation ideal $IG$ when $G = \Z$.

\bigskip  
\begin{lemma} \label{cancellationLemma}Suppose that $r_1 e_1 + \ldots + r_k e_k = 0$, where $e_1, \ldots, e_k$ are idempotents that commute with each other.    Let $v_1 $ be the column vector  $(e_1, \ldots, e_k)^t$. Then there are elements $b_1, \ldots, b_k \in R$ and a skew-symmetric matrix $M \in M_k(R)$ (i.e. $M^t = - M$ and all diagonal entries of $M$ are 0) such that 
	$$
	(r_1, \ldots, r_k)^t = M v_1 + b,
	$$
	where $b$ is the matrix column $(b_1( 1 - e_1), \ldots, b_k ( 1 - e_k))^t$ for some $b_1, \ldots, b_k \in R$.
\end{lemma}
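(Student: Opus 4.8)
The plan is to argue by induction on $k$. Remark~\ref{cor-cancel} already produces a decomposition $(r_1,\dots,r_k)^t = Nv_1 + b$ with a matrix $N$ that is \emph{not} skew-symmetric, so the whole content of the lemma is to arrange that the matrix may be chosen skew-symmetric; it is precisely the inductive step that lets me symmetrize. The base case $k=1$ is immediate: the hypothesis reads $r_1e_1=0$, hence $r_1 = r_1(1-e_1)$, so I take $M=(0)$ (the $1\times 1$ zero matrix) and $b_1 = r_1$.

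For the inductive step I assume the statement for $k-1$ commuting idempotents. The key reduction is to multiply the relation $\sum_{i=1}^{k} r_i e_i = 0$ on the right by $1-e_k$. Since the $e_i$ commute and $e_k(1-e_k)=0$, this annihilates the last summand and yields
\[
\sum_{i=1}^{k-1} r_i'\, e_i = 0, \qquad r_i' := r_i(1-e_k),
\]
a relation among the $k-1$ commuting idempotents $e_1,\dots,e_{k-1}$ (using $e_i(1-e_k)=(1-e_k)e_i$). Applying the induction hypothesis, I obtain a skew-symmetric $M'\in M_{k-1}(R)$ and elements $b_1',\dots,b_{k-1}'\in R$ with $r_i' = \sum_{j\neq i,\, j\le k-1} M'_{ij}e_j + b_i'(1-e_i)$ for $1\le i\le k-1$.

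I then build $M\in M_k(R)$ by \emph{bordering} $M'$: put $M'$ in the top-left $(k-1)\times(k-1)$ block, set $M_{ik}=r_i$ and $M_{ki}=-r_i$ for $i\le k-1$, and $M_{kk}=0$; this is visibly skew-symmetric. For the vector I keep $b_i=b_i'$ for $i\le k-1$ and set $b_k=r_k$. Verifying the first $k-1$ rows is routine, since adding back the term $r_ie_k = M_{ik}e_k$ recovers $r_i = r_i' + r_i e_k$. The only step that genuinely invokes the original relation — and the place where I expect the sole real care to be required — is the last row, where
\[
\sum_{j\le k-1}(-r_j)e_j + r_k(1-e_k) = r_k e_k + r_k - r_k e_k = r_k,
\]
the middle equality being exactly $\sum_{j\le k-1} r_j e_j = -r_k e_k$. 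This closes the induction. The main obstacle is conceptual rather than computational: spotting the $(1-e_k)$-multiplication that strips off one idempotent, together with the bordering pattern that reinstates the missing row and column skew-symmetrically; once these two ideas are in place the remaining manipulations are short.
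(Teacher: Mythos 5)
Your proof is correct, and it follows the same inductive strategy as the paper's: multiply the relation on the right by $1-e_k$ to obtain a relation among $k-1$ commuting idempotents, apply the induction hypothesis, and then border the resulting skew-symmetric matrix. The only divergence is in the bookkeeping: you let the factor $1-e_k$ be absorbed into the coefficients (working with $r_i' = r_i(1-e_k)$ and the original idempotents $e_1,\dots,e_{k-1}$), whereas the paper keeps the coefficients $r_i$ and passes to the modified idempotents $e_i(1-e_k)$. As a consequence your border entries are simply $\pm r_i$ and the last row is checked in one line directly from the original relation $\sum_{j\le k-1} r_j e_j = -r_k e_k$, while the paper must first derive an expression for $r_k$ in terms of the auxiliary elements $\tilde b_i$ supplied by the induction before it can write down the last row and column. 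Your variant is slightly cleaner but establishes the identical statement.
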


\begin{proof}
	1. Suppose first that $k = 2$, i.e., $r_1 e_1 = - r_2 e_2$.
	 Then 
	 $r_1 = r_1 e_1 + r_1 ( 1 - e_1) = - r_2 e_2 + r_1 ( 1 - e_1)$
	 and we have 
	$$ r_1 = - r  e_2 + b_1 ( 1 - e_1)$$
	with $r=r_2$ and $b_1= r_1.$
	Since $(r_2 - r e_1) e_2  =  r_2 e_2 - r_2 e_2 e_1 = 0$ there is $b_2 \in R$ such that
	$$ r_2 = r e_1 +  b_2 ( 1 - e_2).$$
	Then we can set \[
	M=
	\left[ {\begin{array}{cc}
		0 & -r \\
		r & 0 \\
		\end{array} } \right] .
	\]

	2. The general case is obtained by induction on $k$.	
	Suppose the result holds for $k-1$. Note that $r_1 e_1 ( 1 - e_k) + \ldots + r_{k-1}  e_{k-1} ( 1 - e_k) = (r_1 e_1 + \ldots + r_{k-1} e_{k-1}) ( 1 - e_k) = - r_k e_k ( 1 - e_k) = 0$ and that $e_1( 1 - e_k), \ldots, e_{k-1} ( 1 - e_k)$ are idempotents that commute with each other. Then by the inductive hypothesis there is a skew-symmetric matrix $M_0 \in M_{k-1}(R)$ and $\widetilde{b}_1, \ldots, \widetilde{b}_{k-1} \in R$ such that
	$$
	(r_1, \ldots, r_{k-1})^t =$$ $$ M_0 (e_1(1 - e_k), \ldots, e_{k-1} (1 - e_k))^t + 
	(\widetilde{b}_1( 1 - e_1(1 - e_k)), \ldots, \widetilde{b}_{k-1} ( 1 - e_{k-1}(1 - e_k)))^t.
	$$
	Then
	$$
	r_k e_k = - (r_1, \ldots, r_{k-1}) (e_1, \ldots , e_{k-1})^t = $$ $$ - 
	(M_0 (e_1(1 - e_k), \ldots, e_{k-1} (1 - e_k))^t )^t (e_1, \ldots , e_{k-1})^t $$ $$ -(\widetilde{b}_1( 1 - e_1(1 - e_k)), \ldots, \widetilde{b}_{k-1} ( 1 - e_{k-1}(1 - e_k)))  (e_1, \ldots , e_{k-1})^t.
	$$
	Then by multiplying the above equality on the right with $e_k$ 
	we deduce that 
	$$
	r_k e_k = r_k e_k^2 = - 
	(M_0 (e_1(1 - e_k), \ldots, e_{k-1} (1 - e_k))^t )^t (e_1, \ldots , e_{k-1})^t e_k $$ $$ -
	(\widetilde{b}_1( 1 - e_1(1 - e_k)), \ldots, \widetilde{b}_{k-1} ( 1 - e_{k-1}(1 - e_k)))  (e_1, \ldots , e_{k-1})^t e_k = $$ $$ - (\widetilde{b}_1( 1 - e_1(1 - e_k)), \ldots, \widetilde{b}_{k-1} ( 1 - e_{k-1}(1 - e_k)))  (e_1, \ldots , e_{k-1})^t e_k = $$ $$  - (\widetilde{b}_1 e_k, \ldots, \widetilde{b}_{k-1}    e_k ) (e_1, \ldots , e_{k-1})^t  = - \widetilde{b}_1e_1e_k - \ldots - \widetilde{b}_{k-1}   e_{k-1}e_k ,$$
	hence
	$
	(r_k + \widetilde{b}_1 e_1 + \ldots + \widetilde{b}_{k-1} e_{k-1}) e_k = 0
	$ and so
	$
	r_k + \widetilde{b}_1 e_1 + \ldots + \widetilde{b}_{k-1} e_{k-1} \in R( 1 - e_k)$. Then there is $b \in R$ such that 
	$$
	r_k =  - \widetilde{b}_1 e_1 - \ldots - \widetilde{b}_{k-1} e_{k-1} + b ( 1 - e_k).
	$$
	Finally we can define $M \in M_k(R)$ in the following way: 
	
	\noindent
	the last row is $(- \widetilde{b}_1 e_1, \ldots , - \widetilde{b}_{k-1} e_{k-1}, 0)$, the last column is $(\widetilde{b}_1 e_1, \ldots , \widetilde{b}_{k-1} e_{k-1}, 0)^t$ and if we delete from $M$ the last row and   the last column we are left with $M_0 ( 1 - e_k)$.
	Then
	$$
	(r_1, \ldots, r_k)^t = M (e_1, \ldots, e_k)^t + (\widetilde{b}_1 ( 1 - e_1), \ldots, \widetilde{b}_{k-1}( 1 - e_{k-1}), b ( 1 - e_k))^t
	$$
	and it suffices to set $b_i = \widetilde{b}_i$ for $ 1 \leq i \leq k
	-1$ and $b_k = b$. 
\end{proof}

\section{The structure of the left augmentation ideal $IG$ for $G = \mathbb{Z}$} \label{section-Z}

\subsection{ Some remarks on the general case}
From now on $R = \kpg$.
Recall  that $R$ is a $\K$-vector 
space with a generating set $e_{g_1} e_{g_1 g_2} \ldots e_{g_1 \ldots g_n} \# g_1 \ldots g_n,$  $(n \geq 1),$ and that
$IG$ is the kernel of the augmentation map 
$$\varepsilon : R \to B
$$
which is given by $$\varepsilon(e_{g_1} e_{g_1 g_2} \ldots e_{g_1 \ldots g_n} \# g_1 \ldots g_n) = e_{g_1} e_{g_1 g_2} \ldots e_{g_1 \ldots g_n}.$$ 
Thus
$IG$ is generated as a $\K$-vector space by $\{ f_{g_1, \ldots, g_n} \}_{n \geq 1, g_1, \ldots, g_n \in G}$, where
$$f_{g_1,\ldots, g_n} = e_{g_1} e_{g_1 g_2} \ldots e_{g_1 \ldots g_n} \# g_1 \ldots g_n - e_{g_1} e_{g_1 g_2} \ldots e_{g_1 \ldots g_n} \# 1.
$$
We have the following product formula:
$$(b_1 \# g_1)(b_2 \# g_2) = b_1 [g_1] b_2 [g_1^{-1}] \# g_1 g_2$$
where $b_1 \in B e_{g_1}, b_2 \in B e_{g_2}$.
\begin{lemma}  $IG$ is a left $R$-module generated by $\{ f_g \}_{g \in G \setminus 1}$.
\end{lemma}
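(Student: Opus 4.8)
The plan is to show that every $\K$-vector-space generator $f_{g_1,\dots,g_n}$ of $IG$ is already a left $R$-multiple of one of the distinguished elements $f_g$ with $g\in G\setminus\{1\}$, which — together with the fact that $IG$ is a $\K$-span of the $f_{g_1,\dots,g_n}$ (established just before the statement) — immediately gives the claim. First I would record the two identifications coming from Lemma~\ref{A-A-Rlema}: under $\kpg\simeq B\rtimes_\tau G$ one has $[g]=e_g\# g$ and $e_g\# 1 = e_g\in B\subseteq R$, so that the single-index element reads $f_g = e_g\# g - e_g\# 1 = [g]-e_g$. Since $\varepsilon([g])=e_g=\varepsilon(e_g)$, each $f_g$ lies in $IG$; and because $\varepsilon$ is a homomorphism of left $R$-modules (so $IG=\Ker\varepsilon$ is a left ideal), the left $R$-submodule generated by $\{f_g\}_{g\neq 1}$ is contained in $IG$. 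It therefore suffices to prove the reverse inclusion.

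The key step is the identity
\[
f_{g_1,\dots,g_n} = \bigl(e_{g_1}e_{g_1 g_2}\cdots e_{g_1\cdots g_n}\bigr)\, f_{g_1\cdots g_n}.
\]
To verify it I would set $c=e_{g_1}e_{g_1 g_2}\cdots e_{g_1\cdots g_n}\in B$ and $g=g_1\cdots g_n$, and note that $e_g$ occurs among the factors of $c$, so that $c e_g = c$. Using the multiplication rule in $B\rtimes_\tau G$ one has $c\# g = (c\#1)(e_g\# g) = c[g]$ and $c\# 1 = c$, whence
\[
f_{g_1,\dots,g_n} = c\# g - c\# 1 = c[g]-c = c[g]-c e_g = c\,([g]-e_g) = c\, f_g,
\]
since $c\in B\subseteq R$ acts on the left. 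This is exactly the asserted identity.

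Finally I would dispose of the degenerate case $g=g_1\cdots g_n = 1$: here $f_1 = [1]-e_1 = 1-1 = 0$, consistent with $f_{g_1,\dots,g_n}= c\#1 - c\#1 = 0$, so such generators contribute nothing and may be discarded. Thus every $\K$-vector-space generator of $IG$ is a left $R$-multiple of some $f_g$ with $g\neq 1$, which combined with the inclusion $\sum_{g\neq 1}Rf_g\subseteq IG$ noted above yields $IG=\sum_{g\neq 1}Rf_g$, as claimed. I do not anticipate a genuine obstacle: the only points requiring care are the bookkeeping of the identifications $c\# g = c[g]$ and $c\#1=c$ and the absorption $c e_g = c$, all of which follow at once from the product formula in $B\rtimes_\tau G$.
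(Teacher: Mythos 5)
Your proof is correct, but it takes a genuinely different route from the paper's. The paper proves the lemma by the two-term recursion
\[
f_{g_1, \ldots, g_n} = (e_{g_1} \# g_1)\, f_{g_2, \ldots, g_n} + (e_{g_1 g_2} e_{g_1 g_2 g_3} \cdots e_{g_1 \ldots g_n} \# 1)\, f_{g_1},
\]
peeling off one letter at a time and inducting on $n$, so that each $f_{g_1,\ldots,g_n}$ is expressed as an $R$-combination of the various $f_{g_i\cdots}$ appearing along the way. You instead collapse everything in a single step via the identity $f_{g_1,\dots,g_n} = c\, f_{g_1\cdots g_n}$ with $c = e_{g_1}e_{g_1 g_2}\cdots e_{g_1\cdots g_n}\in B$, which checks out: $c e_g = c$ for $g = g_1\cdots g_n$, so $c[g] = (c\#1)(e_g\#g) = c\#g$ and $c f_g = c\#g - c\#1 = f_{g_1,\ldots,g_n}$. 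Your argument is shorter, requires no induction, and in fact proves something stronger than the stated lemma: since the coefficient $c$ lies in $B$, you get $IG = \sum_{g\neq 1} B f_g$, which is essentially part a) of the paper's Lemma~\ref{general-IG} (where $IG = \oplus_{g\in G\setminus\{1\}} B f_g$ is established). The paper's recursion, while less economical here, is the form that is reused later (e.g.\ it is the source of relation \eqref{eq1} and of the formula $[g_1]f_{g_2} = e_{g_1}f_{g_1g_2} - e_{g_1g_2}f_{g_1}$ driving the analysis of $I\mathbb{Z}$), which is presumably why the authors phrase the proof that way.
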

\begin{proof} It suffices to apply the relation below many times and use induction on $n$
	$$f_{g_1, \ldots, g_n} = (e_{g_1} \# g_1) f_{g_2, \ldots, g_n} + (e_{g_1 g_2} e_{g_1 g_2 g_3} \ldots e_{g_1 \ldots g_n} \# 1) f_{g_1}. $$
\end{proof}

Observe that 
\begin{equation} \label{eq1} 
 (e_{g_1} \# g_1) f_{g_2} + (e_{g_1 g_2} \# 1) f_{g_1} - (e_{g_1} \# 1) f_{g_1 g_2}  = 0   \ \ \hbox{ for all } g_1, g_2 \in G
\end{equation}
and
\begin{equation} \label{eq2}
 ((e_g -1) \# 1) f_g  = 0   \ \ \hbox{ for  all } g \in G.
\end{equation}

\begin{lemma}\label{general-IG} a)  There is an isomorphism of left $B$-modules $$IG = \oplus_{g \in G \setminus \{ 1 \} } B f_g \simeq \oplus_{g \in G \setminus \{ 1 \} } B e_g.$$  In particular, $IG$ is a projective left $B$-module;
	
	b) $IG$ is a left $R$-module via the multiplication in $R,$ moreover,   
	$$
	[g_1] f_{g_2} =  e_{g_1} f_{g_1 g_2} - e_{g_1 g_2} f_{g_1};
	$$ 
	
	c) for every $g \in G$  the set $$
	\widetilde{X}_g = \{ e_h f_{hg} - e_{hg} f_h \}_{h \in G} 
	$$
	generates $R f_g$ as a left $B$-module and  $R f_g = R f_{g^{-1}}$.
	
\end{lemma}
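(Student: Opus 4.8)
The plan is to work throughout in the $B$-module grading $R = \oplus_{g \in G} B[g] = \oplus_{g \in G} Be_g \# g$, under which the augmentation takes the simple form $\varepsilon(be_g \# g) = be_g$, and to lean on the two relations \eqref{eq1} and \eqref{eq2} together with the identity $f_g = [g] - e_g$. Relation \eqref{eq1} will carry parts (b) and (c), while \eqref{eq2} records the annihilation $e_g f_g = f_g$ that governs the $B$-module structure in (a).

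For part (a), I would first note that $x = \sum_g b_g e_g \# g$ lies in $IG = \Ker \varepsilon$ exactly when $\sum_g b_g e_g = 0$ in $B$. Comparing graded components, I then check the identity $x = \sum_{g \neq 1} b_g f_g$: the components in degrees $g \neq 1$ match term by term, and the degree-$1$ component matches precisely because the kernel relation gives $b_1 = -\sum_{g \neq 1} b_g e_g$. This upgrades the preceding lemma from generation of $IG$ over $R$ to generation over $B$. The same bookkeeping yields directness and the isomorphism: the map $B \to Bf_g$, $c \mapsto cf_g$, kills exactly $B(1 - e_g)$, so $Bf_g \simeq B/B(1-e_g) \simeq Be_g$; and a relation $\sum_{g\neq 1} c_g f_g = 0$ forces $c_g e_g = 0$ for each $g$ upon reading the degree-$g$ component, hence each summand already vanishes and the sum is direct. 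Projectivity is then immediate, since each $Be_g$ is a direct summand of $B$ via $B = Be_g \oplus B(1-e_g)$, and a direct sum of projective $B$-modules is projective.

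Parts (b) and (c) are short consequences of this picture. In (b), $IG$ is a left $R$-module because it is the kernel of the left $R$-module homomorphism $\varepsilon$; the displayed formula $[g_1]f_{g_2} = e_{g_1}f_{g_1 g_2} - e_{g_1 g_2} f_{g_1}$ is exactly relation \eqref{eq1} rewritten after the identifications $e_{g_1}\# g_1 = [g_1]$, $e_{g_1 g_2}\#1 = e_{g_1 g_2}$, $e_{g_1}\#1 = e_{g_1}$ (or, directly, a one-line check from $f_g = [g]-e_g$ using $[g_1][g_2] = e_{g_1}[g_1 g_2]$ and $[g_1]e_{g_2} = e_{g_1 g_2}[g_1]$). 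For (c), part (b) identifies $\widetilde X_g = \{[h]f_g\}_{h \in G}$; since $R = \oplus_h B[h]$ and $b[h]f_g = b([h]f_g)$ for $b \in B$, we get $Rf_g = \sum_h B([h]f_g)$, so $\widetilde X_g$ generates $Rf_g$ over $B$. The equality $Rf_g = Rf_{g^{-1}}$ then follows by specializing the formula in (b): using $f_1 = 0$ and $e_1 = 1$ one finds $[g^{-1}]f_g = e_{g^{-1}} f_1 - f_{g^{-1}} = -f_{g^{-1}}$, so $f_{g^{-1}} \in Rf_g$, and the symmetric computation gives $f_g = -[g]f_{g^{-1}} \in Rf_{g^{-1}}$.

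I do not expect a genuine obstacle here: once the grading is fixed, everything reduces to comparing graded components against the single scalar relation $\sum_g b_g e_g = 0$. The one place needing care is the \emph{bookkeeping in part (a)}, where generation must be improved from the $R$-module to the $B$-module level and directness established at the same time; both rest on correctly matching the degree-$1$ component, which is where the kernel condition is consumed.
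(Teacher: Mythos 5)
Your proposal is correct and follows essentially the same route as the paper: relation \eqref{eq1} (equivalently the one-line computation from $f_g=[g]-e_g$) gives the formula in (b), the annihilator identification $Bf_g\simeq B/B(1-e_g)\simeq Be_g$ gives (a), and recognizing $\widetilde X_g=\{[h]f_g\}_{h\in G}$ together with $R=\oplus_h B[h]$ gives (c). Your explicit graded-component check of $B$-generation and directness in (a), and the observation $[g^{-1}]f_g=-f_{g^{-1}}$ in place of the paper's index-shifted sum manipulation, are minor streamlinings of the same argument.
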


\begin{proof}
	a), b)  	Identifying $[g_1]$ with $e_{g_1} \# g_1$ and  $e_g$ with $e_g \# 1,$ we have by (\ref{eq1}) that
	\begin{equation} \label{rozden1}
	[g_1] f_{g_2} =  e_{g_1} f_{g_1 g_2} - e_{g_1 g_2} f_{g_1}.
	\end{equation} 	 
Hence $IG$ is a left $B$-module generated by $\{ f_g \}_{g \in G \setminus \{ 1 \}} $ and 
	$$IG = \sum_{g \in G \setminus \{ 1 \} } B f_g = \oplus_{g \in G \setminus \{ 1 \} } B f_g,
	$$
	and we have an isomorphism of left $B$-modules   $B f_g \simeq B / B(1 - e_g),$ observing that  $B(1-e_g)$ is the annihilator of 
$f_g$ in $B.$ Thus $IG$ as a $B$-module is isomorphic to $$\oplus_{g \in G \setminus \{ 1 \} } B / B(1 - e_g) \simeq \oplus_{g \in G \setminus \{ 1 \} } B e_g,$$ which  is  a projective $B$-module, since $B = B( 1 - e_g) \oplus B e_g$.
	
	c) 
	By (\ref{rozden1})
	$$R f_g = B f_g + \sum_{h \in G \setminus \{ 1 \}} B (e_h f_{hg} - e_{hg} f_h) =$$ $$ B f_g + B f_{g^{-1}} + \sum_{g_1, g_2 \in G \setminus \{ 1 \}, g_2^{-1} g_1 = g  } B(e_{g_2} f_{g_1} - e_{g_1} f_{g_2}) =
	$$
	$$
	B f_{g^{-1}} + \sum_{h \in G \setminus \{ 1 \}} B( e_{hg^{-1}} f_h   -     e_h f_{hg^{-1}}  ) = R f_{g^{-1}},
	$$
	where $f_{1} = 0,$  $e_{1} = 1$ and the latter equality is obtained from the first one taking $g^{-1}$ instead of 
$g.$ Note that $f_g = e_{1} f_g - e_g f_{1}$ and so
	$$
	\widetilde{X}_g = \{ e_h f_{hg} - e_{hg} f_h \}_{h \in G} =  \{ f_g \} \cup \{ e_h f_{hg} - e_{hg} f_h \}_{h \in G \setminus \{ 1 \}}
	$$
	generates $R f_g$ as a left $B$-module. \end{proof}

 \subsection{The case $G = \mathbb{Z}$} 
 
 \ \ \ \ \ \ \ \ \ \ \ \ \ \ \ \ \ \ \ \ \ \ \ \ \ \ \ \ \ \ \ \ \ \ \

Let $G = \mathbb{Z} = \langle g \rangle$. Write $f_i $ for $f_{g^i } = e_{g^i} \# g^i - e_{g^i} \# 1 $ and $e_i $ for $e_{g^i}$.
Thus $e_0 = 1$, $f_0 = 0$ and the basic relations  (\ref{eq1}) and (\ref{eq2}) are
\begin{equation} \label{carnival1} 
e_i f_{i+j} = e_{i+j} f_i + (e_i \# g^{i}) f_{j},
\hbox{ 
	and }
(e_i - 1) f_i = 0  \hbox{ for } i,j \in \mathbb{Z}.  \end{equation}

\begin{lemma}\label{lem:Annihilator} For each $i \in \mathbb{Z},$ $i \neq 0,$ the annihilator of $f_i$ in $R$ is $R(1-e_i).$
\end{lemma}
\begin{proof} The inclusion $ ann_R(f_{i})\supseteq  R ( 1 - e_{i})$  obviously follows from \eqref{carnival1}. For the opposite inclusion take an arbitrary $\lambda \in ann_R(f_{i})$ and write $$\lambda = \sum _{j=-l}^m b_j [g^j] $$ with  
$b_j \in B$ and $m,l\geq 0.$
We apply induction on $l + m$. For $l = 0 = m$ we have $\lambda = b \in B$ and $0 = \lambda f_i = b ([g^i] - e_i)$ implies $
b [g^i] = 0 = b e_i$, hence $\lambda = b \in B ( 1- e_i) \subseteq R ( 1 - e_i)$, as required. 

Assume now that $l + m > 0$.
Then
 $$0= \sum _{j=-l}^m b_j [g^j] f_i= \sum _{j=-l}^m b_j [g^j] (e_i[g^i]-e_i)=  \sum _{j=-l}^m b_j [g^j] ([g^i]-e_i)=  \sum _{j=-l}^m b_j e_{j} [g^{i+j}] - \sum _{j=-l}^m b_j e_{i+j} [g^j].$$
 Without loss of generality we can assume that $i > 0$.  Since 
$R= \oplus _{t \in G} B [t],$  the above equality implies  that
 $b_m e_{m} [g^{i+m}] = 0$. Hence $b_m e_m \in \{ b \in B \   | \   b [g^{i+m}] = 0 \} = B ( 1 - e_{i+m})$, thus $b_m \in B ( 1 - e_{i+m})$. Then 
 $$b_m[g^m] \in B ( 1 - e_{i+m}) [g^m] = B [g^m] ( 1 - e_i) \subseteq R ( 1 - e_i)$$ and
 $$\lambda_0 = \lambda - b_m [g^m] =   \sum _{j=-l}^{m-1} b_j [g^j] \in ann_R (f_i) = \{ r \in R \ | \ r f_i = 0 \}.$$
 By induction $\lambda_0 \in R ( 1 - e_i)$, hence $\lambda = \lambda_0 +  b_m [g^m] \subseteq R ( 1 - e_i)$.

\end{proof}

Note that
$$IG = \oplus_{i \in \mathbb{Z} \setminus \{ 0 \} } B f_i \subseteq R = \oplus_{i \in \mathbb{Z}}  B (e_i \# g^i),
$$ where $B f_{i} \simeq B / B (e_{i} - 1)$ and $R f_i = R f_{-i}$. Define for $k \geq 1$
$$
V_k = R f_1 + \ldots + R f_k = R f_{-1} + \ldots + R f_{-k} \subset IG
$$
and hence, using \eqref{carnival1},  we have $$ R f_j = \sum_{i \in \mathbb{Z}} B (e_i \# g^i) f_j = 
B f_j + \sum_{i \in \mathbb{Z} \setminus \{ 0 \}} B (e_i f_{i+j} - e_{i+j} f_i).
$$
Note that $V_k$ has a generating set $X_k$ as a $B$-module, where 
$$
X_k = \{  \pm (e_i f_{i+j} - e_{i+j} f_i) \}_{1 \leq j \leq k, i \in \mathbb{Z} } \supset \{\pm  f_1, \ldots,  \pm f_k, \pm f_{-1}, \ldots , \pm f_{-k} \}.$$ 
 Obviously the above definition will work if we remove 
the formally unnecessary sign $\pm$ but we prefer to keep it 
for symmetry.

We have a natural order in $\{ f_i \}_{i \in \mathbb{Z}}$  : $f_{i_1} < f_{i_2}$ if $|i_1| < |i_2|$ and $f_{-i} < f_i$ for $i > 0$.
For a non-zero element $v = \sum_i b_i f_i$, where $b_i \in B$, we define the {\bf leading term} as $b_{i_0} f_{i_0}$ where $f_{i_0}$ is maximal with $b_{i_0} f_{i_0} \not= 0$ and we call $b_{i_0}$ a {\bf leading coefficient}. Note that any element of $b_{i_0} + B( 1 - e_{i_0})$ is a leading coefficient of $v$ since $B( 1 - e_{i_0})$ is the annihilator of $f_{i_0}$ in $B$.  We call $f_{i_0}$ the {\bf degree} of  $v$ with respect to $\{ f_i \}$ and write $f_{i_0} = deg(v)$. The {\bf support} 
$ supp\, (v)$ of the non-zero element $v$ is the set of all $f_i$ for which $b_i f_i \not= 0$.

\begin{lemma} \label{leading-go-back}
	If $v \in V_k \setminus \{ 0 \}$ has degree $f_s$ with $|s| \geq k+1$ then any leading coefficient of
	 $v$ with respect to $\{ f_i \}$ belongs to $I$, where 
	
	a)  $I = B e_{s-1} + \ldots + B e_{s-k}  + B (1 - e_s)$  if $s > 0$;
	
	b) $I = B e_{s+1} + \ldots + B e_{s+k}  + B (1 - e_s)$ if $s < 0$.
	
\end{lemma}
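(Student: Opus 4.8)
The plan is to expand an arbitrary $v\in V_k$ in terms of the $B$-module generators of $V_k$, read off the coefficient of each $f_t$, and then turn the degree hypothesis into a family of ``conservation laws'' among these coefficients whose sum isolates the leading coefficient. First I would write
\[
v=\sum_{i\in\mathbb Z}\sum_{j=1}^{k}b_{i,j}\,(e_i\# g^i) f_j,\qquad b_{i,j}\in B,
\]
with almost all $b_{i,j}$ zero, which is possible since $V_k=\sum_{j=1}^k Rf_j$ and $R=\oplus_i B(e_i\#g^i)$. Using $(e_i\#g^i)f_j=e_i f_{i+j}-e_{i+j}f_i$ from \eqref{carnival1}, the decomposition $IG=\oplus_{t\ne 0}Bf_t$, and $ann_B(f_t)=B(1-e_t)$, I would compute the coefficient of $f_t$ in $v$ and multiply by $e_t$ to remove the ambiguity modulo $B(1-e_t)$. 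Writing $\beta_{i,j}:=b_{i,j}e_i e_{i+j}$, this yields the compact formula
\[
(\text{coefficient of }f_t\text{ in }v)\,e_t=\rho_t-\sigma_t,\qquad \rho_t:=\sum_{j=1}^{k}\beta_{t-j,j},\quad \sigma_t:=\sum_{j=1}^{k}\beta_{t,j}.
\]

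Treating case a) ($s>0$) first: since $\deg v=f_s$ and $f_t>f_s$ exactly when $|t|>s$, the left-hand side vanishes for every $t>s$, giving the relations $\rho_t=\sigma_t$ for all $t>s$. The key step is then to sum these relations over all $t>s$ (a finite sum, as only finitely many $\beta_{i,j}$ are nonzero) and to reindex the backward sums by $t'=t-j$. The ``forward'' tails cancel and only boundary terms survive, producing the telescoping identity
\[
\sigma_s+\sum_{m=1}^{k-1}\sum_{j=m+1}^{k}\beta_{s-m,j}=0 .
\]
Each $\beta_{s-m,j}$ lies in $Be_{s-m}$ with $1\le m\le k-1$, so $\sigma_s\in\sum_{j=1}^{k}Be_{s-j}$; and $\rho_s\in\sum_{j=1}^{k}Be_{s-j}$ trivially. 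Hence the leading coefficient $\ell$ satisfies $\ell e_s=\rho_s-\sigma_s\in\sum_{j=1}^{k}Be_{s-j}$, which is equivalent, via the elementary observation $x\in I\iff xe_s\in\sum_{j}Be_{s-j}$, to $\ell\in I=Be_{s-1}+\cdots+Be_{s-k}+B(1-e_s)$. The hypothesis $s\ge k+1$ enters exactly here: it guarantees that all indices $s-1,\dots,s-k$ are positive, so the $e_{s-j}$ are genuine idempotents (none equals $e_0=1$) and the $f_{s-j}$ really lie below $f_s$ in the order, making $I$ a proper reduction ideal.

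For case b) ($s<0$) I would run the identical computation, now summing the relations over $t<s$ (where again $f_t>f_s$); the forward direction then points toward $-\infty$ and the surviving boundary terms lie in $\sum_{j=1}^{k}Be_{s+j}$, yielding $I=Be_{s+1}+\cdots+Be_{s+k}+B(1-e_s)$. Alternatively this reduces to a) via the automorphism of $R=\K_{par}\mathbb Z$ induced by $g\mapsto g^{-1}$, which preserves $IG$ and sends $f_i\mapsto f_{-i}$ and $e_i\mapsto e_{-i}$. I expect the only real obstacle to be bookkeeping: verifying the cancellation of the forward tails and correctly identifying the $k-1$ surviving diagonals $\beta_{s-m,j}$ ($1\le m\le k-1$, $j>m$) after reindexing the double sum, since these are precisely the terms that fall into the target ideal. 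Once the telescoping identity is in place, the conclusion is immediate.
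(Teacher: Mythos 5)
Your proof is correct, but it follows a genuinely different route from the paper's. The paper first shows, via the skew-symmetric cancellation Lemma \ref{cancellationLemma} and an induction on the maximal support element (Claims \ref{Claim1.Lemma9.3} and \ref{Claim2.Lemma9.3}), that any $v\in V_k\setminus\{0\}$ can be rewritten as a $B$-combination of elements of $X_k$ whose supports all lie at or below $\deg(v)=f_s$, and then reads the leading coefficient off the only generators $e_{s-j}f_s-e_sf_{s-j}$, $1\le j\le k$, that can still contribute to $f_s$. You keep the original expansion $v=\sum_{i,j}b_{i,j}(e_i\# g^i)f_j$ untouched and instead sum the relations $\rho_t=\sigma_t$ for $t>s$ coming from the vanishing of the higher coefficients; I have checked the reindexing, the resulting identity $\sigma_s+\sum_{m=1}^{k-1}\sum_{j=m+1}^{k}\beta_{s-m,j}=0$, and the memberships $\beta_{s-m,j}\in Be_{s-m}$ in case a) and $\beta_{s-m,j}\in Be_{s+(j-m)}$ with $1\le j-m\le k-1$ in case b) (the latter using the second factor of $\beta_{i,j}=b_{i,j}e_ie_{i+j}$), and everything is sound; the alternative reduction of case b) to case a) via the involution $g\mapsto g^{-1}$ also works. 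Your argument is a real simplification in that it bypasses Lemma \ref{cancellationLemma} and the rewriting claims entirely, though it does not produce the normal form for elements of $V_k$ that the paper's Claim \ref{Claim2.Lemma9.3} provides. One small correction: the hypothesis $|s|\ge k+1$ is not what makes your computation valid --- the telescoping goes through for any $s\neq 0$ --- it only guarantees that none of the indices $s-1,\dots,s-k$ (resp.\ $s+1,\dots,s+k$) equals $0$, so that $I\neq B$ and the conclusion is not vacuous.
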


\begin{proof}

		\begin{claim} \label{Claim1.Lemma9.3} If $v = \sum_i b_i x_i$, where each $b_i \in B, x_i \in X_k$, $$f_{s_0} = max \{ f_j ~ | ~ f_j \in \cup_i supp(x_i) \}$$ and $f_{s_0} \not\in supp(v)$, i.e. $f_{s_0} > f_s$, then there is a decomposition $v = \sum_i \widetilde{b}_i \widetilde{x}_i$ with $\widetilde{b}_i \in B, \widetilde{x}_i \in X_k$ and  
		$$f_{s_1} = max \{ f_j ~ | ~ f_j \in \cup_i supp(\widetilde{x}_i) \} < f_{s_0}.$$ 
		\end{claim}
	
Applying Claim \ref{Claim1.Lemma9.3} several times we obtain

\begin{claim}\label{Claim2.Lemma9.3}
If $v \in V_k \setminus \{ 0 \}$ then we can write $v = \sum_i b_i x_i$, where each $b_i \in B, x_i \in X_k$ and $max \{ f_j ~ | ~ f_j \in \cup_i supp(x_i) \} = f_s = deg (v)$.
\end{claim} 

Note that Lemma \ref{leading-go-back} easily follows  from Claim \ref{Claim2.Lemma9.3}. Indeed, if $s>0,$ then the leading term of $v$ is 
of the form  $be_i f_{i+j} , b\in B,   i+j=s,$ $1\leq j \leq k ,$ hence $e_i \in \{ e_{s-k}, \ldots , e_{s-1}\}.$ Recalling that the leading coefficient is given modulo $B(1-e_s),$ we obtain a). Case b) is symmetric. 

\medskip
{\it Proof of Claim \ref{Claim1.Lemma9.3}.} We arrange the indexes of $x_i$ so that $f_{s_0} \in supp (x_i)$  for $1 \leq i \leq d$ and
 $f_{s_0} \not\in supp (x_i)$ for $i>d.$ Thus $x_j = e_{i_j} f_{s_0} - e_{s_0} f_{i_j}$ for $ 1 \leq j \leq d$ and $f_{s_0}$ is greater than the degree of $\widetilde{v},$ where  $\widetilde{v} = \sum_{d+1 \leq j} b_j x_j,$ and 
$$v = b_1(e_{i_1} f_{s_0} - e_{s_0} f_{i_1}) + \ldots + b_d(e_{i_d} f_{s_0} - e_{s_0} f_{i_d}) + \widetilde{v}.
$$
 Since $f_{s_0} \not\in supp(v)$ we have that
$\sum_{1 \leq j \leq d}  b_j e_{i_j} f_{s_0} = 0$, i.e., $\sum_{1 \leq j \leq d}  b_j e_{i_j} \in ann_B (f_{s_0}) = B ( 1 - e_{s_0})$, which means that  for some $\widehat{b} \in B$ we have
$$
b_1 e_{i_1} + \ldots + b_d e_{i_d} + \widehat{b} (1 - e_{s_0}) = 0.$$
Then by Lemma \ref{cancellationLemma} there exist  some skew-symmetric $(d+1) \times (d+1)$-matrix $M = (m_{j,t} )$  with coefficients in $B$   and 
 $\bar{b}_1, \ldots,  \bar{b}_{d+1} \in B$ such that
\begin{equation} \label{matrix-new}
(b_1, \ldots, b_d, \widehat{b})^t = M (e_{i_1}, \ldots, e_{i_d}, 1 - e_{s_0})^t + (\bar{b}_1 ( 1 - e_{i_1}), \ldots, \bar{b}_{d} ( 1 - e_{i_k}), \bar{b}_{d+1} e_{s_0})^t.
\end{equation} 
Note that
$$
v = (\sum_{1 \leq j \leq d} b_j e_{i_j}) f_{s_0} - e_{s_0} (\sum_{1 \leq j \leq d} b_j f_{i_j}) + \widetilde{v} = - e_{s_0} (\sum_{1 \leq j \leq d} b_j f_{i_j}) + \widetilde{v}.
$$
	Then it follows by  (\ref{matrix-new}), using     $(1 - e_{s_0}) e_{s_0} = 0$, $(1 - e_{i_j})f_{i_j} = 0,$  that 
	$$e_{s_0} \sum_{1 \leq j \leq d} b_j f_{i_j} = e_{s_0} \sum_{1 \leq j \leq d} \sum_{1 \leq t \leq d}  m_{j,t} e_{i_t} f_{i_j}.
$$	
Since $m_{j,j} = 0$ and $m_{j,t} = - m_{t,j}$ we deduce that
$$e_{s_0} \sum_{1 \leq j \leq d} b_j f_{i_j} = e_{s_0} \sum_{1 \leq j < t \leq d} m_{j,t} (e_{i_t} f_{i_j} - e_{i_j} f_{i_t})$$ and hence we have a new decomposition
\begin{equation*} v = - e_{s_0} (\sum_{1 \leq j \leq d} b_j f_{i_j}) + \widetilde{v} =  - e_{s_0} \sum_{1 \leq j < t \leq d} m_{j,t} (e_{i_t} f_{i_j} - e_{i_j} f_{i_t})  + \widetilde{v}
\end{equation*}
\begin{equation}  \label{new-dec} 
\in \sum_{1 \leq j < t \leq d} B (e_{i_t} f_{i_j} - e_{i_j} f_{i_t})  + \widetilde{v}. \end{equation} 
Note that since $T_k = \{ e_{i_1} f_{s_0} - e_{s_0} f_{i_1}, \ldots, e_{i_d} f_{s_0} - e_{s_0} f_{i_d}\} \subseteq X_k$ with $f_{s_0}$ maximal element among the supports of the elements of $T_k$ then either

1. $s_0 \geq k+1, \{ i_1, \ldots, i_d \} \subseteq \{ s_0 - k, \ldots, s_0 - 1\}$;

or

2. $s_0 \leq - k - 1, \{ i_1, \ldots, i_d \} \subseteq \{ s_0 + 1, \ldots, s_0 + k \}$.

In both cases $\{ e_{i_t} f_{i_j} - e_{i_j} f_{i_t} \}_{1 \leq j < t \leq d} \subseteq X_k$. This together with the new decomposition (\ref{new-dec}) completes the proof of Claim \ref{Claim1.Lemma9.3}.

\medskip

\end{proof}

\begin{prop} \label{quotient} $$V_{k+1} / V_k \simeq R/ (R ( 1 - e_{k+1}
	) + \sum_{1 \leq i \leq k} R e_i).$$
\end{prop}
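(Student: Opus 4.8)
The plan is to realize $V_{k+1}/V_k$ as a cyclic $R$-module by computing the annihilator of $f_{k+1}$ modulo $V_k$. I would consider the surjective homomorphism of left $R$-modules
$$\phi : R \to V_{k+1}/V_k, \qquad r \mapsto r f_{k+1} + V_k,$$
which is onto because $V_{k+1} = V_k + R f_{k+1}$. Writing $J = R(1-e_{k+1}) + \sum_{1 \le i \le k} R e_i$, the whole statement reduces to proving $\ker \phi = J$.

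For the inclusion $J \subseteq \ker \phi$: the relation $(e_{k+1}-1)f_{k+1}=0$ from \eqref{carnival1} gives $R(1-e_{k+1})f_{k+1}=0$. For $1 \le i \le k$, the first relation in \eqref{carnival1} with $i+j=k+1$ reads $e_i f_{k+1} = e_{k+1} f_i + [g^i] f_{k+1-i}$; since $1 \le i \le k$ forces $1 \le k+1-i \le k$, both summands lie in $V_k$ (namely $e_{k+1}f_i \in B f_i$ and $[g^i] f_{k+1-i} \in R f_{k+1-i}$), whence $e_i f_{k+1} \in V_k$ and $R e_i f_{k+1} \subseteq V_k$.

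The substance is the reverse inclusion. First I would record a homogeneity criterion: using $[g^m] e_j = e_{m+j}[g^m]$, the ideal $J$ is homogeneous for the grading $R = \oplus_m B[g^m]$, with degree-$m$ part $\big(B(1-e_{m+k+1}) + \sum_{i=1}^k B e_{m+i}\big)[g^m]$. Hence, for $r = \sum_m b_m [g^m]$, Lemma~\ref{idempotents} yields
$$r \in J \iff b_m \delta_m = 0 \ \text{ for all } m, \qquad \delta_m := e_m e_{m+k+1} \textstyle\prod_{i=1}^k (1-e_{m+i}).$$
The key observation is that $\delta_m = 0$ whenever $-k \le m \le -1$, since then some factor is $1 - e_0 = 0$; so every \emph{surviving} index (one with $b_m \delta_m \neq 0$) satisfies $m \ge 0$ or $m \le -k-1$. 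Because $J \subseteq \ker\phi$, I may replace $r$ by its reduced representative modulo $J$, so that $b_m = 0$ unless $m$ survives, and it suffices to show that a nonzero reduced $r$ has $r f_{k+1} \notin V_k$.

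Finally the leading-term computation. Expanding via \eqref{carnival1} gives
$$r f_{k+1} = \sum_p c_p f_p, \qquad c_p = b_{p-k-1} e_{p-k-1} - b_p e_{p+k+1}.$$
If $M, M'$ are the largest and smallest surviving indices, the support lies in $[M', M+k+1]$ and contains both endpoints $f_{M+k+1}$ (coefficient $b_M e_M$) and $f_{M'}$ (coefficient $-b_{M'} e_{M'+k+1}$), which are nonzero exactly because $b_M \delta_M \neq 0$ and $b_{M'}\delta_{M'}\neq 0$. The $f_i$-degree is thus attained at an endpoint, and the omission of $[-k,-1]$ among surviving indices is precisely what forces this degree $f_s$ to have $|s| \ge k+1$, so Lemma~\ref{leading-go-back} applies to the leading coefficient. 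Multiplying the resulting membership in the ideal $I$ by $\delta_M$ (resp. $\delta_{M'}$) collapses it to $b_M \delta_M = 0$ (resp. $b_{M'}\delta_{M'}=0$), a contradiction: indeed $\delta_M$ kills every generator $e_{M+i}$, $1 \le i \le k$, and $1-e_{M+k+1}$ of $I$ while fixing $e_M e_{M+k+1}$, and symmetrically for $\delta_{M'}$. I expect the main obstacle to be exactly this last bookkeeping, namely verifying that in each configuration of the signs of $M$ and $M'$ the degree is realized at an endpoint with $|s|\ge k+1$ and that the correct sign-form ($s>0$ or $s<0$) of $I$ in Lemma~\ref{leading-go-back} is annihilated by the matching $\delta$; the vanishing $\delta_m=0$ for $-k\le m\le -1$ is the device that makes this always succeed.
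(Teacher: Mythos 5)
Your proof is correct, and its skeleton (realizing $V_{k+1}/V_k$ as $R/\ker\phi$ for the evaluation map at $f_{k+1}$, the easy inclusion via \eqref{carnival1}, and the appeal to Lemma~\ref{leading-go-back}) matches the paper's; but your treatment of the hard inclusion $\ker\phi\subseteq J$ is genuinely different and more streamlined. The paper takes $\lambda\in J_k$, looks at the degree $f_s$ of $v=\lambda f_{k+1}$, and splits into two cases: $|s|\le k$, handled by decomposing $\lambda=\lambda_1+\lambda_2+\lambda_3$ according to the sign of the exponent and running a telescoping argument on the chain of relations $b_te_te_{k+1+t}=b_{t-k-1}e_{t-k-1}e_t$; and $|s|\ge k+1$, handled by Lemma~\ref{leading-go-back} together with Remark~\ref{cor-cancel} and an induction that peels off one element of the support of $\lambda$ at a time. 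You instead exploit that $J$ is graded for $R=\oplus_m B[g^m]$, with membership in the $m$-component detected by the single idempotent $\delta_m=e_me_{m+k+1}\prod_{i=1}^k(1-e_{m+i})$ (via Lemma~\ref{idempotents}), reduce $r$ modulo $J$ once and for all, and observe that the identity $\delta_m=0$ for $-k\le m\le -1$ eliminates precisely the indices that could produce a degree with $|s|\le k$; one application of Lemma~\ref{leading-go-back} at the extremal surviving index then yields $b_M\delta_M=0$ (or $b_{M'}\delta_{M'}=0$), contradicting reducedness. This buys you the elimination of both the case split and the induction. The only imprecision is the claim that the support of $rf_{k+1}$ contains \emph{both} endpoints $f_{M+k+1}$ and $f_{M'}$: when $M=-k-1$ or $M'=0$ the corresponding ``endpoint'' is $f_0=0$ and drops out. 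This is harmless, because in every sign configuration of $(M,M')$ the endpoint of largest absolute value is a genuine nonzero term, it realizes the degree, and it satisfies $|s|\ge k+1$ with the matching $\delta$ annihilating the correct form (a) or (b) of the ideal $I$ in Lemma~\ref{leading-go-back}; this is exactly the bookkeeping you flag at the end, and it does check out.
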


\begin{proof}
	Note that $V_{k+1} / V_k$ is a cyclic  $R$-module, thus
	$$
	V_{k+1} / V_k \simeq R / J_k,
	$$
	where $J_k$ is the left ideal of $R$ defined by
	$$ 
	J_k : = \{ r \in R \mid \ r  f_{k+1} \in V_k \}.
	$$ 
	To prove  the proposition we will show that \begin{equation} \label{neu2} J_k = R ( 1 - e_{k+1}
	) + \sum_{1 \leq i \leq k} R e_i.\end{equation}
	The easy part is the inclusion $R ( 1 - e_{k+1}
	) + \sum_{1 \leq i \leq k} R e_i \subseteq J_k$ that is equivalent to $1 - e_{k+1}, e_1, \ldots, e_k \in J_k$. To see this, note by \eqref{eq2} that $( 1 - e_{k+1}) f_{k+1} = 0$ and using \eqref{rozden1}, for $ 1 \leq i \leq k,$ 
	\begin{equation} \label{star123}
	e_i f_{k+1} = [g^i] [g^{-i}] f_{k+1} = \end{equation} $$[g^i] ( e_{-i} f_{k+1 -i} - e_{k+1 -i} f_{-i}) \in R ( e_{-i} f_{k+1 -i} - e_{k+1 -i} f_{-i}) \subseteq \sum_{-k \leq j \leq k} R f_j = V_k.
	$$
	 Thus to complete the proof of the proposition it remains to be shown that $J_k \subseteq R ( 1 - e_{k+1}
	 ) + \sum_{1 \leq i \leq k} R e_i$.

	 Let $$\lambda \in J_k.$$ By the  the structure of $R = \kpg$  we have a decomposition
	\begin{equation} \label{lam0}
	\lambda = \sum_{i} b_i ( e_i \# g^i) \in R,
	\end{equation} 
	where $b_i \in B$. Hence, using again  \eqref{rozden1},
	\begin{equation} \label{rozden2}
	v = \lambda f_{k+1} = \sum_i b_i( (e_i \# g^i)  f_{k+1}) = \sum_i b_i ( e_i f_{k+1 + i} - e_{k+1 + i} f_i).
	\end{equation}
	
	If $v = 0$ then $\lambda \in ann_R(f_{k+1}) =  R ( 1 - e_{k+1})$  in view of Lemma~\ref{lem:Annihilator}.
	
	\medskip
	Suppose now that $v \not= 0$ and
	let $f_s$ be the degree of $v$ with respect to $\{ f_i \}$.

	\medskip
	Case 1.  Suppose $|s| \leq k$. Then   
	\begin{equation} \label{neu1} 
	v = \lambda f_{k+1} \in \sum_{-k \leq i \leq k} B f_i \subseteq V_k.
	\end{equation} 
	We decompose $$\lambda = \lambda_1 + \lambda_2 + \lambda_3,$$ where $$\lambda_1 = \sum_{i < -k} b_i ( e_i \# g^i), ~ \lambda_2 = \sum_{-k \leq i \leq -1}  b_i ( e_i \# g^i) ~ \hbox{  and } ~ \lambda_3 = \sum_{i \geq 0} b_i ( e_i \# g^i).$$
	Note that by (\ref{rozden2}) and $f_0 = 0$ we have  $$v_1 : = \lambda_1 f_{k+1} \in \sum_{j< 0} B f_j, v_2 : = \lambda_2 f_{k+1} \in \sum_{-k \leq j \leq k} B f_j $$ and $$v_3 : =  \lambda_3 f_{k+1} \in \sum_{j>  0} B f_j.$$ Then by (\ref{neu1}) and the fact that $
	v = v_1 + v_2 + v_3$
	we deduce that 
	$$v_1 \in \sum_{-k \leq j< 0} B f_j ~ \hbox{ and } \ v_3 \in \sum_{0 < j \leq k} B f_j,$$ i.e.
	$$
	v_1 = \lambda_1 f_{k+1} = \sum_{i < -k} b_i ( e_i f_{k+1 + i} - e_{k+1 + i} f_i) \in \sum_{-k \leq j< 0} B f_j
	$$ and so 
	$$ v_1 = \sum_{t < 0}  a_t   f_t$$
	where $a_t = - b_t e_{k+1+t} + b_{t-k-1} e_{t-k-1}$ for $t < -k$ and $a_t = b_{t-k-1} e_{t-k-1} $ for $ -k \leq t < 0$. Then 
	\begin{equation} \label{rozden3} 
	(- b_t e_{k+1+t} + b_{t-k-1} e_{t-k-1}) f_t = 0 \hbox{ for } t < -k. \end{equation} 	
	Hence
	$$- b_t e_{k+1+t} + b_{t-k-1} e_{t-k-1} \in B(1 - e_t)$$ and by multiplying with $e_t$ we get
	\begin{equation} \label{rozden4} 
	b_t  e_t e_{k+1+t} = b_{t-k-1} e_{t-k-1} e_t  \hbox{ for } t < -k.\end{equation} Since only finitely many $b_t$ can be non-zero we deduce from (\ref{rozden4}) that
	 \begin{equation} \label{rozden5}  b_t e_t e_{k+1+t}  = 0 \hbox{ for } t < -k.\end{equation}
	 Note that (\ref{rozden5}) is equivalent to
	\begin{equation} \label{rozden6} 
	b_t e_t \in B(1 - e_{k+1+t}) \hbox{ for } t<-k.\end{equation} 	
Therefore for $t < -k$
$$b_t e_t \# g^t = b_t e_t e_t \# g^t \in B(1 - e_{k+1+t}) e_t \# g^t = B 
(1 - e_{k+1+t}) [g^t] = B [g^t] (1 - e_{k+1}),$$
 hence \begin{equation} \label{lam1} \lambda_1 \in R ( 1 - e_{k+1}).\end{equation} 
	Similar calculations can be done for $\lambda_3$ to show that \begin{equation} \label{lam333} \lambda_3 \in R ( 1 - e_{k+1}). \end{equation}  
	For completeness we give the details. Recall that 
	\begin{equation} \label{neu3}
	v_3 = \lambda_3 f_{k+1} = \sum_{i \geq 0} b_i ( e_i f_{k+1 + i} - e_{k+1 + i} f_i) \in \sum_{0 < j \leq k} B f_j
	\end{equation} and so 
	$$ v_3 = \sum_{t > 0}  c_t   f_t,$$
	where $c_t = - b_t e_{k+1+t} + b_{t-k-1} e_{t-k-1}$ for $t >  k$ and $c_t = - b_t e_{k+1+t} $ for $  0< t \leq k$. Then by (\ref{neu3})
	\begin{equation} \label{rozden31} 
	(- b_t e_{k+1+t} + b_{t-k-1} e_{t-k-1}) f_t = 0 \hbox{ for } t > k. \end{equation} 	
	Hence
	$$- b_t e_{k+1+t} + b_{t-k-1} e_{t-k-1} \in B(1 - e_t)$$ and by multiplying with $e_t$ we get
	\begin{equation} \label{rozden41} 
	b_t  e_t e_{k+1+t} = b_{t-k-1} e_{t-k-1} e_t  \hbox{ for } t > k.\end{equation} Since only finitely many $b_t$ can be non-zero we deduce from (\ref{rozden41}) that
	\begin{equation} \label{rozden51}  b_t e_t e_{k+1+t}  = 0 \hbox{ for } t \geq 0.\end{equation}
	Note that (\ref{rozden51}) is equivalent to
	\begin{equation} \label{rozden61} 
	b_t e_t \in B(1 - e_{k+1+t}) \hbox{ for } t \geq 0.\end{equation} 	
	Then for $t \geq 0$
	$$b_t e_t \# g^t = b_t e_t e_t \# g^t \in B(1 - e_{k+1+t}) e_t \# g^t = B 
	(1 - e_{k+1+t}) [g^t] = B [g^t] (1 - e_{k+1}).$$  This implies \eqref{lam333}.

	Finally since $[g^i] e_{-i} = [g^i]$ we have that \begin{equation} \label{lam2} \lambda_2 = \sum_{-k \leq i \leq -1} b_i (e_i \# g^i) \in \sum_{-k \leq i \leq -1} B [g^i] = \sum_{-k \leq i \leq -1} B [g^i] e_{-i} \subseteq \sum_{1 \leq j \leq k} R e_j. \end{equation}
	Then by (\ref{lam1}), (\ref{lam333}) and (\ref{lam2}) we obtain that
	$$\lambda = \lambda_1 + \lambda_2 + \lambda_3 \in R(1 - e_{k+1}) + \sum_{1 \leq j \leq k} R e_j.$$

	Case 2. Suppose $|s| \geq k+1$,  recalling that $f_s$ is the degree of $v$. 
	We can assume that $s \geq k+1$, the case $s \leq - k-1$ is similar.
	
	Recall that $$\lambda = \sum_{i} b_i e_i \# g^{i} = \sum_{i} b_i [g^i].$$ Denote by $c$  the highest $i$ for which 
$b_i [g^i] \not= 0.$  Then by (\ref{rozden2}) there are two posibilities:
	
	1) $b_c e_c f_{k+1 +c} \not=0$ and  $f_{k+1+c}$ is the degree of $v = \lambda f_{k+1}$, i.e., $k+1+c = s$;
	
	2) $b_c e_c f_{k+1 +c} =0.$
	
	\medskip
	If 1) holds the leading term of $v$ is $b_{s-k-1}e_{s-k-1} f_s$ and   $b_{s-k-1}e_{s-k-1}$ is a leading coefficient of $v$, so $b_{s-k-1} e_{s-k-1}
	 \not= 0$. 	
	By Lemma \ref{leading-go-back} $$ e_{ s- k-1} b_{s-k-1} \in B e_{s-k} + \ldots + B e_{s-1} + B (1 - e_s).$$ 
	Then by Remark \ref{cor-cancel}
	$$b_{s-k-1} \in B e_{s-k} + \ldots + B e_{s-1} + B (1 - e_s) + B ( 1 - e_{s-k-1})
	$$
	and
	$$
	b_{s-k-1} (e_{s-k-1} \# g^{s-k-1}) \in$$ $$ (B e_{s-k} + \ldots + B e_{s-1} + B (1 - e_s) + B ( 1 - e_{s-k-1}))  (e_{s-k-1} \# g^{s-k-1})= 
	$$
	$$
	(B e_{s-k} + \ldots + B e_{s-1} + B (1 - e_s) )  (e_{s-k-1} \# g^{s-k-1})=$$ \begin{equation} \label{incl1}  (e_{s-k-1} \# g^{s-k-1}) (B e_1 + \ldots + B e_k + B( 1 - e_{k+1})).\end{equation}
	Then since $(1 - e_{k+1}) f_{k+1} = 0$ we have
	$$
	(e_{s-k-1} \# g^{s-k-1}) (B e_1 + \ldots + B e_k + B( 1 - e_{k+1})) f_{k+1} =$$ \begin{equation} \label{lam3}  (e_{s-k-1} \# g^{s-k-1}) (B e_1 + \ldots + B e_k) f_{k+1} \subseteq V_k,
	\end{equation} 
	since by (\ref{star123})
	\begin{equation} \label{lam4} 
	(B e_1 + \ldots + B e_k) f_{k+1} \subseteq V_k.
	\end{equation}
 Then by (\ref{incl1}) and (\ref{lam3}) we have
	\begin{equation} \label{incl2}  
	b_{s-k-1} (e_{s-k-1} \# g^{s-k-1}) f_{k+1} \in V_k.\end{equation}
	
	Finally for $\widetilde{\lambda} =
	\lambda - b_{s-k-1} e_{s-k-1} \# g^{s-k-1} = \lambda - b_c e_c \# g^c \in R$ we have by (\ref{incl2})
	$$
	\widetilde{\lambda} \cdot f_{k+1} =	v - b_{s-k-1} (e_{s-k-1} \# g^{s-k-1}) \cdot f_{k+1}   \in V_k
	$$ and by (\ref{lam0}) $supp_G(\widetilde{\lambda}) = supp_G(\lambda) \setminus \{ b_{s-k-1} e_{s-k-1} \# g^{s-k-1} \}$ 
	has strictly fewer elements than $supp_G(\lambda)$, where we define the support with respect to $G$ as 
	$$supp_G(\sum_i b_i e_i \# g^i) = \{ b_i e_i \# g^i ~ | ~ b_i e_i \# g^i  \not= 0 \}.$$ Note that $supp_G$ should not be confused with $supp$ used in the proof of 
	Lemma \ref{leading-go-back}. Then by induction on the number of elements in $supp_G(\lambda)$, where  $supp_G(0)$ has 0 elements, we have  that 
	$supp_G(\widetilde{\lambda})\subseteq R e_1 + \ldots + R e_k + R(1 - e_{k+1}) $, hence, in view of \eqref{incl1}, $$supp_G(\lambda) = supp_G(\widetilde{\lambda}) \cup \{ b_{s-k-1} e_{s-k-1} \# g^{s-k-1} \} \subseteq R e_1 + \ldots + R e_k + R(1 - e_{k+1})$$
	and so $\lambda \in R e_1 + \ldots + R e_k + R(1 - e_{k+1})$.
	
	\medskip
	If 2) holds then $b_c e_c \in ann_B(f_{k+1+c}) =  B( 1 - e_{k+1+c})$ and hence
	$$
	b_c [g^c] = b_c e_c [g^c] \in B( 1 - e_{k+1+c}) [g^c] = B [g^c] ( 1 - e_{k+1}) \subseteq R ( 1 - e_{k+1}).
	$$
	Thus for $\widetilde{\lambda} = \lambda - b_c e_c \# g^c$ we have
	$$\widetilde{\lambda} f_{k+1} = \lambda f_{k+1} - b_c e_c \# g^c f_{k+1} = v - b_c[g^c] f_{k+1}  \in v - R ( 1 - e_{k+1}) f_{k+1} = v - 0 = v \in V_k,$$
	and we can continue as in the last paragraph of 1).

It follows from Case 1 and Case 2 that 
	$$J_k \subseteq R(1 - e_{k+1}) + \sum_{1 \leq j \leq k} R e_j,$$ completing the proof.  
	\end{proof}

 \begin{lemma} Let $G \not= 1$ be a group and $R = \kpg$. Then 
		
		a) $IG$ is not a free left $R$-module,
		
		b)  $\kpg$ has the invariant basis number (IBN) property i.e. $(\kpg)^n \simeq (\kpg)^m$ as left $\kpg$-modules implies that $n = m$.
	\end{lemma}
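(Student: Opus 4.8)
The plan is to prove the two assertions by base change along two different ring homomorphisms out of $R=\kpg$, reducing each to a dimension count over a field; I will dispose of (b) first, as it serves as a warm-up for the functor-theoretic idea used in (a).

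For (b) I would use the $\K$-algebra homomorphism $\phi:\kpg\to\K$ determined by $[g]\mapsto 1$ for all $g\in G$; a direct check shows that $\phi$ respects the three defining relations of $\kpg$ (each side collapses to $1$), so $\phi$ is well defined, and it is the composite of the map $\kpg\to\K G$, $[g]\mapsto g$, with the usual augmentation $\K G\to\K$. Regarding $\K$ as a right $\kpg$-module through $\phi$, if $(\kpg)^n\simeq(\kpg)^m$ as left $\kpg$-modules, then applying $\K\otimes_{\kpg}-$ gives $\K^n\simeq\K^m$ as $\K$-vector spaces, and comparing dimensions forces $n=m$. In short, the existence of a ring epimorphism from $\kpg$ onto the field $\K$ transfers the IBN property to $\kpg$.

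For (a) the guiding observation is that, by Lemma \ref{general-IG}(a), $IG\simeq\bigoplus_{g\neq 1}Be_g$ as left $B$-modules, each summand $Be_g$ with $g\neq 1$ being projective but not free over $B$. The main obstacle to a naive argument is that $IG$ may well be projective as a left $R$-module (indeed it is when $G=\Z$), so non-projectivity cannot be exploited and one must genuinely separate ``free'' from ``projective''. My plan is to detect the absence of a free $B$-summand by base change to the one-dimensional $B$-module $\K_0$ on which every $e_g$, $g\neq 1$, acts as $0$ (the module $\K$ of Lemma \ref{lemma-flat}(b) for $T=G\setminus\{1\}$; here $e_1=1$ still acts as $1$, and the assignment is consistent because $1$ lies outside the ideal of $B$ generated by the $e_g$ with $g\neq 1$). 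Since $e_g$ is idempotent and acts as $0$, one obtains $\K_0\otimes_B Be_g=0$ for each $g\neq 1$, whence $\K_0\otimes_B IG=0$.

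Finally I would derive the contradiction. Viewing $\kpg$ as a left $B$-module through the grading $\kpg=\bigoplus_{g\in G}B[g]$ with $B[g]\simeq Be_g$, only the summand $Be_1=B$ survives the functor $\K_0\otimes_B-$, so $\K_0\otimes_B\kpg\simeq\K_0\neq 0$. If $IG$ were a free left $R$-module, say $IG\simeq R^{(I)}$, then restricting scalars to $B\subseteq R$ and applying $\K_0\otimes_B-$ would give $0=\K_0\otimes_B IG\simeq(\K_0\otimes_B\kpg)^{(I)}\simeq\K_0^{(I)}$, forcing $I=\varnothing$ and hence $IG=0$. This contradicts $IG\neq 0$, which holds for every nontrivial $G$ because $f_g=e_g\#g-e_g\#1\neq 0$ for any $g\neq 1$ (its two terms lie in distinct graded components). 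The crux of the whole argument is thus the clean vanishing $\K_0\otimes_B Be_g=0$ for $g\neq 1$, contrasted with the persistence of the free summand $Be_1=B$ inside $R$.
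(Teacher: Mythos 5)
Your proof is correct and follows essentially the same strategy as the paper's: part (a) is in substance the paper's own argument (base change along the algebra map $B \to \K$ sending $e_g \mapsto 0$ for $g \neq 1$, which kills $IG \simeq \oplus_{g \neq 1} B e_g$ while $\K \otimes_B \kpg \simeq \K \neq 0$, so a free module $R^{(I)}$ would force $I = \varnothing$). For part (b) you tensor over $\kpg$ with the augmentation module $\K$ given by $[g] \mapsto 1$, whereas the paper reuses its $B$-module base change $\K \otimes_B (\kpg)^n \simeq \K^n$; this is a minor, equally valid variant of the same dimension-counting idea.
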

	
	\begin{proof}	
		a) Consider $\K$ as a right $B$-module via the epimorphism of $\K$-algebras $B \to \K$ that sends $e_g$ to $0$ for every $g \in G \setminus \{ 1 \}$ and sends $e_{1} = 1$ to $1$. We view $R$ as a left $B$-module via the multiplication in $R$. Note that 
		$$\K \otimes_B
		\kpg = \K \otimes_B ( \oplus_{g \in G} B e_g [g]) \simeq \oplus_{g \in G} (\K \otimes_B B e_g [g]) = $$ $$  \oplus_{g \in G} (\K e_g \otimes_B B  [g]) =   \K e_{1} \otimes_B B  [1] = \K \otimes_B B = \K. $$
		Thus for every non-zero left free $R$-module $F$ we have
		$\K \otimes_B F \not= 0.$

		We will show  that $$\K \otimes_B IG = 0,$$
		which implies that $IG$ is not a free $R$-module. Recall that 
		by Lemma \ref{general-IG} $IG = \oplus_{g \in G \setminus \{ 1 \}} B f_g$. Then using that
		$(1 - e_g) f_g = 0$ we have 
		$$\K \otimes_B IG =\K \otimes_B (\oplus_{g \in G \setminus \{ 1 \}} B f_g) \simeq  \oplus_{g \in G \setminus \{ 1 \}} \K \otimes_B  B f_g = $$ $$   \oplus_{g \in G \setminus \{ 1 \}} \K \otimes_B  B e_g f_g =    \oplus_{g \in G \setminus \{ 1 \}} \K e_g \otimes_B  B  f_g = 0. $$

		b) Suppose that $(\kpg)^n \simeq (\kpg)^m$ as left $\kpg$-modules and recall that by part a) $\K \otimes_B
		\kpg  \simeq \K$. Then there is an isomorphism of $\K$-vector spaces $$\K^n \simeq \K \otimes_B (\kpg)^n \simeq \K \otimes_B (\kpg)^m \simeq \K^m,$$ and hence $n = m$. 
		
\end{proof}

\begin{theorem}
	Let $G = \mathbb{Z}$ and $R = \kpg$. Then $I G = \cup_{k \geq 1} V_k$ such that 
	each $V_k$ is a projective $R$-module and $$IG \simeq \oplus_{k \geq 1} {P}_k$$ where ${P}_k = V_{k} / V_{k-1}$ is a projective left $R$-module and $V_0 = 0$. 
	In particular $I G$ is a projective $R$-module, hence $cd_{\K}^{par}(\mathbb{Z}) = 1$.  Furthermore, $IG$ is not a finitely generated  $R$-module and it is not a free left $R$-module.
\end{theorem}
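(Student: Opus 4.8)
The plan is to read everything off the filtration $0 = V_0 \subseteq V_1 \subseteq V_2 \subseteq \cdots$ of $IG$ by the left $R$-submodules $V_k = Rf_1 + \cdots + Rf_k$, whose union is $IG$ (since $\{f_i\}_{i\neq 0}$ generates $IG$ over $R$ by Lemma~\ref{general-IG} and $Rf_i = Rf_{-i}$, one has $IG = \sum_{i\ge 1} Rf_i = \bigcup_k V_k$). First I would analyse the successive quotients $P_k = V_k/V_{k-1}$. By Proposition~\ref{quotient}, $P_k \simeq R/(R(1-e_k) + \sum_{1\le i\le k-1} Re_i)$ as left $R$-modules, and the elements $1-e_k, e_1, \dots, e_{k-1}$ all lie in the commutative subalgebra $B$, so they are pairwise commuting idempotents. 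Hence Lemma~\ref{idempotents} applies verbatim and shows that each $P_k$ is a projective left $R$-module.

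Next I would assemble the filtration. For every $k$ the short exact sequence $0 \to V_{k-1} \to V_k \to P_k \to 0$ splits because $P_k$ is projective, giving a submodule $Q_k \subseteq V_k$ with $Q_k \cong P_k$ and $V_k = V_{k-1} \oplus Q_k$. An immediate induction yields $V_k = Q_1 \oplus \cdots \oplus Q_k$, which is projective as a finite direct sum of projectives; this proves the clause that each $V_k$ is projective. Since the $V_k$ are nested and these decompositions are compatible, every element of $IG = \bigcup_k V_k$ lies in some $V_N = \bigoplus_{j\le N} Q_j$ and is uniquely expressed there, whence $IG = \bigoplus_{k\ge 1} Q_k \simeq \bigoplus_{k\ge 1} P_k$. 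A direct sum of projectives is projective, so $IG$ is a projective left $R$-module. The augmentation sequence $0 \to IG \to R \to B \to 0$ is then a projective resolution of $B$ of length one, giving $cd^{par}_{\K}(\Z) \le 1$; together with $cd^{par}_{\K}(\Z) \ge cd_{\K}(\Z) = 1$ from Corollary C this gives $cd^{par}_{\K}(\Z) = 1$.

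For the last two assertions, ``not free'' is immediate from the preceding lemma, which holds for every nontrivial $G$. For ``not finitely generated'' I would show that the chain $V_1 \subsetneq V_2 \subsetneq \cdots$ is strictly increasing, i.e. that each $P_k \neq 0$; then $IG$ cannot be finitely generated, since a finite generating set would lie in some $V_N$, forcing $IG = V_N$. Writing the defining left ideal as $Re$ with $e \in B$ the idempotent produced by Lemma~\ref{idempotents}, we have $P_k \simeq R(1-e)$, so $P_k \neq 0$ exactly when $e \neq 1$, equivalently when $1 \notin B(1-e_k) + \sum_{i<k} Be_i$ inside the commutative ring $B$. To rule this out I would evaluate the character $\chi_A : B \to \K$, $e_g \mapsto [\,g \in A\,]$, at the subset $A = \{1, g^k\}$: it sends $1 - e_k$ and every $e_i$ with $i < k$ to $0$, but sends $1$ to $1$, so $1$ does not lie in that ideal and $e \neq 1$.

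The main obstacle is precisely the verification that each $P_k$ is nonzero; everything else is bookkeeping layered on top of Proposition~\ref{quotient} and Lemma~\ref{idempotents}. This step genuinely requires the explicit description of $B$ as the semilattice algebra on the idempotents $e_g$ (the canonical-form and uniqueness statements already used in the proof of Proposition~\ref{not-projective}), which is exactly what guarantees that the required $0/1$ assignment $\chi_A$ is an honest algebra character of $B$. Once $P_k \neq 0$ is secured, the strictness of the filtration, and hence the failure of finite generation, follow at once.
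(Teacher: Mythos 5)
Your proof is correct and follows essentially the same route as the paper: Proposition~\ref{quotient} plus Lemma~\ref{idempotents} give projectivity of each $P_k$, the resulting splittings yield $V_k=Q_1\oplus\cdots\oplus Q_k$ and $IG\simeq\oplus_k P_k$, and non-finite-generation follows from strictness of the filtration. The only addition is your explicit character argument ($e_{g^j}\mapsto[\,g^j\in A\,]$ with $A=\{1,g^k\}$) verifying $P_k\neq 0$, a point the paper asserts without proof; this is a legitimate and welcome elaboration rather than a different approach.
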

\begin{proof} Note that 
	$V_1 = R f_1 \simeq R / R (1 - e_1)$ and by  Proposition~\ref{quotient}, $V_{k+1} / V_k \simeq R / (R e_1 + \ldots + R e_k + R (1 - e_{k+1}))$. Then by Lemma \ref{idempotents},   $V_1$ and each $P_{k+1} = V_{k+1} / V_k$ is a projective $R$-module, and therefore the short exact sequence of $R$-modules
	$$
	0 \to V_k \to V_{k+1} \to P_{k+1} \to 0,
	$$
	where $V_0 = 0$, splits i.e $V_{k+1} \simeq V_k \oplus P_{k+1}$.
	Thus
	$$
	IG = \cup_{k \geq 1} V_k \simeq \oplus_{k \geq 1} {P}_k
	$$ is a projective $R$-module.
	
	 We note that if $IG$ is a finitely generated $R$-module, say by a set $T$, then for some $V_k$ we have that $T \subseteq V_k$. Thus  $IG = V_k$ and $V_{k+1} = V_k$, a contradiction with $V_{k+1}/ V_k \simeq R / (R e_1 + \ldots + R e_k + R (1 - e_{k+1})) \not= 0$.	 
	
\end{proof}


\section*{Acknowledgments}

 The authors thank the anonymous referee for the useful comments.
This work was supported by the S\~ao Paulo Research Foundation (FAPESP) [ 2020/16594-0 to M. D., 2018/23690-6 to D. H.  K.], and the National Council for Scientific and Technological Development (CNPq) [ 306583/2016-0 and 309469/2019-8 to M.  M. A., 312683/2021-9 to M.  D. and  305457/2021-7  to D.  H.  K. ].

{}
\end{document}